\DeclareMathOperator{\divv}{div}
\DeclareMathOperator{\curl}{curl}
\DeclareMathOperator{\loc}{loc}
\DeclareMathOperator{\divf}
{di\overset{\raisebox{0.1ex}{\hspace{0.1em}$\mathbf{\cdot}$}}{v}}
\begin{document}
\title{Global weak solutions and incompressible limit to the isentropic compressible Navier--Stokes equations in the half-plane with ripped density and large initial data
\thanks{
Wu's research was partially supported by Fujian Alliance of Mathematics (No. 2023SXLMMS08) and the Scientific Research Funds of Xiamen University of Technology (No. YKJ25009R).
Zhong's research was partially supported by Fundamental Research Funds for the Central Universities (No. SWU--KU24001) and National Natural Science Foundation of China (No. 12371227). }
}

\author{Shuai Wang$\,^{\rm 1}\,$,\ Guochun Wu$\,^{\rm 2}\,$,\
Xin Zhong$\,^{\rm 1}\,$ {\thanks{E-mail addresses: swang238@163.com (S. Wang),
guochunwu@126.com (G. Wu), xzhong1014@amss.ac.cn (X. Zhong).}}\date{}\\
\footnotesize $^{\rm 1}\,$
School of Mathematics and Statistics, Southwest University, Chongqing 400715, P. R. China\\
\footnotesize $^{\rm 2}\,$ School of Mathematics and Statistics, Xiamen University of Technology, Xiamen 361024, P. R. China}

\maketitle
\newtheorem{theorem}{Theorem}[section]
\newtheorem{definition}{Definition}[section]
\newtheorem{lemma}{Lemma}[section]
\newtheorem{proposition}{Proposition}[section]
\newtheorem{corollary}{Corollary}[section]
\newtheorem{remark}{Remark}[section]
\renewcommand{\theequation}{\thesection.\arabic{equation}}
\catcode`@=11 \@addtoreset{equation}{section} \catcode`@=12
\maketitle{}

\begin{abstract}
We prove the global existence of weak solutions to the isentropic compressible Navier--Stokes equations with ripped density in the half-plane under a slip boundary condition provided the bulk viscosity coefficient is properly large. Moreover, we show that such solutions converge globally in time to a weak solution of the inhomogeneous incompressible Navier--Stokes equations as the bulk viscosity coefficient tends to infinity. In particular, {\it the large initial data and an initial patch of density as well as a vacuum are allowed}. Our method relies on a Desjardins-type logarithmic interpolation inequality and some new techniques based on the effective viscous flux.
\end{abstract}

\textit{Key words and phrases}. Navier--Stokes equations; global weak solutions; incompressible limit; slip boundary conditions; large initial data; vacuum.

2020 \textit{Mathematics Subject Classification}. 35Q30; 35A01; 35B40.


\tableofcontents

\section{Introduction}
\subsection{Background and motivation}
This paper deals with the classical isentropic compressible  Navier--Stokes equations in the half-plane $\mathbb{R}^2_+=\{\mathbf{x}\in\mathbb{R}^2:x_2>0\}$:
\begin{align}\label{a1}
\begin{cases}
\rho_t+\divv(\rho\mathbf{u})=0,\\
(\rho\mathbf{u})_t+\divv(\rho\mathbf{u}\otimes\mathbf{u})-\mu\Delta\mathbf{u}
-(\mu+\lambda)\nabla\divv\mathbf{u}+\nabla P
=0.
\end{cases}
\end{align}
The unknowns $\rho$, $\mathbf{u}=(u^1,u^2)$, and $P=P(\rho)=a\rho^\gamma\ (a>0,\gamma>1)$ are the fluid density, velocity, and pressure, respectively. The constants $\mu$ and $\lambda$ represent the shear viscosity and bulk viscosity of the fluid, respectively, satisfying physical restrictions
\begin{equation*}
\mu>0,\ \ \ \mu+\lambda\geq0.
\end{equation*}
The system \eqref{a1} is supplemented with initial and boundary conditions
\begin{equation}
(\rho,\mathbf{u})|_{t=0}=(\rho_0,\mathbf{u}_0)(\mathbf{x}),\ \ \mathbf{x}\in\mathbb{R}^2_+,
\end{equation}
\begin{equation}\label{a3}
(u^1,u^2)=(u^1_{x_2},0),\ \ x_2=0,\ t>0,
\end{equation}
and the far-field constraint
\begin{align}\label{a4}
(\rho_0,\mathbf{u}_0)(\mathbf{x})\rightarrow(\tilde{\rho},\mathbf{0}) \ \text{as}\ |\mathbf{x}|\rightarrow\infty,\ \ \mathbf{x}\in\mathbb{R}^2_+,
\end{align}
where the constant $\tilde{\rho}>0$ is the reference density.

For $t\geq0$, observe that solutions of \eqref{a1} obey the global energy law
\begin{align*}
\int_{\mathbb{R}^2_+}\bigg(\frac{1}{2}\rho |\mathbf{u}|^2+G(\rho)\bigg)\mathrm{d}\mathbf{x}+\int_0^t\left[\mu\|\nabla \mathbf{u}\|_{L^2}^2+(\mu+\lambda)\|\divv \mathbf{u}\|_{L^2}^2\right]\mathrm{d}\tau
+\int_0^t\int_{\partial\mathbb{R}^2_+}\mu|\mathbf{u}|^2\mathrm{ds}\mathrm{d}\tau
\leq C_0,
\end{align*}
where the initial total energy $C_0$ and the potential energy $G(\rho)$ are given by
\begin{equation}\label{z1.6}
C_0\triangleq\int_{\mathbb{R}^2_+}
\bigg(\frac{1}{2}\rho_0|\mathbf{u}_0|^2+G(\rho_0)\bigg)\mathrm{d}\mathbf{x},\ \
G(\rho)\triangleq\rho\int_{\tilde{\rho}}^\rho\frac{P(v)-P(\tilde{\rho})}{v^2}\mathrm{d}v.
\end{equation}
Thus, the incompressibility can be recovered formally from the global energy law as $\lambda$ tends to infinity.

The Navier-slip boundary condition is defined pointwise as the flow velocity being parallel to the tangential component of the stress on the boundary. For the flat boundary, a more explicit expression in \cite{Hoff05} for the half-space $\mathbb{R}^N_+=\big\{\mathbf{x}\in\mathbb{R}^N:x_N>0\big\}$ shows that
\begin{equation*}
\mathbf{u}(\mathbf{x})=-\mu K(\mathbf{x})\boldsymbol\omega\mathbf{n}
\end{equation*}
on $\partial\mathbb{R}^N_+=\big\{\mathbf{x}\in\mathbb{R}^N:x_N=0\big\}$, where $K$ is a positive smooth function, $\boldsymbol\omega$ is the vorticity matrix with $\omega^{j,k}=u^j_{x_k}-u^k_{x_j}$, and $\mathbf{n}=-\mathbf{e}_N$ is the unit outer normal vector. Without
loss of generality, we take $\mu K\equiv1$ in this paper, which is equivalent to the boundary condition \eqref{a3}.

System \eqref{a1} can be derived from the laws of classical physics: the first equation is the mass balance, while the second equation corresponds to the momentum balance in the absence of external forces. The reader may refer to \cite[Chapter 1]{PL96} for more details. Due to its importance in both mathematical and practical application, the global well-posedness for multi-dimensional isentropic compressible Navier--Stokes equations has received special attention over the past decades. It is actually not possible to mention all the remarkable and interesting results in this regards. Instead we will limit ourselves to citing a few of them. In \cite{MN80,MN83}, Matsumura and Nishida established global classical solutions for three-dimensional (3D) initial-boundary value problem when the initial data are a small perturbation of constant equilibrium states in $H^3$. Global existence of strong solutions for small initial data $(\rho_0,\mathbf{u}_0)$ in critical Besov spaces $\dot{B}^{\frac{N}{2}}_{2,1}(\mathbb{R}^N)
\times\dot{B}^{\frac{N}{2}-1}_{2,1}(\mathbb{R}^N)\ (N\geq2)$ was proven by Danchin \cite{Da00}. This significant result was generalized in \cite{CD10,CCZ10,H11} for the critical $L^p$ framework. A further progress in \cite{FZZ18,HHW19,ZLZ20} shows global solutions for certain classes of large initial data. It should be noted that all the results stated above require the initial density to be strictly positive, that is, exclude the possible appearance of vacuum states.

However, as pointed out by many authors (see, e.g., \cite{HLX12,M1,M2}), the mathematical analysis of compressible fluid near vacuum regions presents significant challenges due to the inherent singularity and degeneracy of the governing equations. The major breakthrough for isentropic compressible  Navier--Stokes equations with vacuum was made by P.-L. Lions \cite{PL98}, who developed renormalization techniques and the effective viscous flux arguments to construct global weak solutions in $\mathbb{R}^N$ for $\gamma\geq\frac{3N}{N+2}\ (N=2,3)$. Since then some important developments have been achieved by several authors. Feireisl and collaborators in \cite{F04,EF01} extended Lions' result to $\gamma>\frac{N}{2}$ with the aid of oscillation defect measures. For initial data with spherical symmetry and axisymmetry, respectively, Jiang and Zhang \cite{JZ01,JZ03} derived global weak solutions for any $\gamma>1$. Their approach relies crucially on exploiting the properties of the effective viscous flux to enhance integrability estimates on the density. Meanwhile, Bresch and Jabin \cite{BJ18} investigated the global existence of weak solutions to the compressible Navier--Stokes equations with non-monotone pressure and anisotropic stress tensor. More recently, a new construction of global weak solutions to compressible Navier--Stokes equations with pressure $P(\rho)=\rho^\gamma$ for $\gamma\geq\frac95$ in three dimensions was obtained in \cite{CMZ24}. Despite these advances, the regularity and uniqueness for such weak solutions remain unknown.

There are also very interesting investigations concerning the global well-posedness of strong (or classical) solutions for isentropic compressible  Navier--Stokes equations with vacuum. A significant contribution was made in \cite{HLX12}, where the global existence of a unique classical solution in $\mathbb{R}^3$ for initial data possessing small total energy but potentially large oscillations and permitting far field vacuum was demonstrated. Later on, the 3D Cauchy problem in \cite{HLX12} was extended to the whole plane case with vacuum at infinity \cite{LX19} via weighted energy estimates and decay rate analysis. More recently, the method developed in \cite{HLX12} was generalized to 3D initial-boundary value problem with Navier-slip boundary conditions in \cite{CL23} and 3D Cauchy problem with large initial energy when the adiabatic exponent $\gamma$ approaches $1$ in \cite{HHPZ24}. It should be noticed that a central feature of their studies in \cite{HLX12,LX19} is the derivation of both the time-independent upper bound of the density and time-dependent higher-order estimates of smooth solutions.

Apart from {\it large-energy weak solutions} \cite{PL98,EF01} and {\it small-energy classical solutions} \cite{HLX12,LX19}, the third type of solutions to \eqref{a1} is the so-called {\it Hoff's intermediate weak solutions}. More precisely, by imposing specific structural conditions on the initial data, Hoff \cite{Hoff95,Hoff95*} proved the global existence of such solutions in the whole space.
Indeed, the regularity of such weak solutions is stronger than Lions--Feireisl's solutions in the sense that particle path can be defined in the non-vacuum region, but weaker than the usual strong solutions in the sense that discontinuity of the density can be transported along the particle path (cf. \cite{Hoff02}).
More recently, based on some new techniques involving the effective viscous flux, Hu--Wu--Zhong \cite{HWZ} established the global existence of Hoff's weak solutions to the Cauchy problem in $\mathbb{R}^3$ with bounded nonnegative initial density that are of small energy but possibly large oscillations. Moreover, these solutions converge globally in time to a global weak solution of the inhomogeneous incompressible Navier--Stokes equations as the bulk viscosity goes to infinity.

Meanwhile, when the bulk viscosity is sufficiently large, Danchin and Mucha \cite{DM17} obtained global regular solutions to the isentropic compressible Navier--Stokes equations in $\mathbb{R}^2$ with initial density bounded away from vacuum and arbitrarily large initial velocity. As a by-product, they derived the incompressible limit as $\lambda\rightarrow\infty$ with a convergence rate of order $(2\mu+\lambda)^{-1/2}$. Subsequently, Danchin and Mucha \cite{DM23} generalized the result in \cite{DM17} to the torus $\mathbb{T}^2$, allowing vacuum states but assuming zero initial total momentum. It is certainly interesting to investigate whether these results can be extended to the case with physical boundaries and the presence of  vacuum as well as non-zero initial total momentum. For more studies on the compressible Navier--Stokes equations, one may consult the excellent handbook \cite{GN18} and references contained therein.

It should be noted that the works mentioned above primarily focus on the Cauchy problem. As a typical case, the study of the isentropic compressible Navier--Stokes equations in the half-space with physical boundary conditions is also important mathematically and physically. The large-time behavior of solutions in the half-space with Dirichlet boundary conditions was studied in \cite{KK02,KK05}. The global classical and weak solutions with vacuum in the half-space $\mathbb{R}_+^3$ involving Navier boundary conditions were proven in \cite{D12,Hoff05}. Later on, Perepelitsa \cite{P14} generalized the result \cite{Hoff05} to the case of Dirichlet boundary conditions. However, the smallness assumptions on the initial energy are required in \cite{Hoff05,D12,P14}. Naturally, one may wonder whether the global solutions with {\it large initial data} in the half-space can be established. Motivated by \cite{DM17,DM23,HWZ}, the main purpose of this paper is to give an affirmative answer for these questions. More precisely, we shall establish global weak solutions to the initial-boundary value problem \eqref{a1}--\eqref{a4} with ripped density and large initial data, allowing the presence of a vacuum and non-zero initial total momentum. Furthermore, we also show the limiting behavior of such solutions as $\lambda\rightarrow\infty$ to a solution of the inhomogeneous incompressible Navier--Stokes equations
\begin{align}\label{a5}
\begin{cases}
\varrho_t+{\bf v}\cdot\nabla\varrho=0,\\
\varrho{\bf v}_t+\varrho{\bf v}\cdot\nabla {\bf v}+\nabla \Pi-\mu\Delta {\bf v}=0,\\
\divv{\bf v}=0,\\
(\varrho,{\bf v})\big |_{t=0}=(\rho_0,{\bf v}_0),
\end{cases}
\end{align}
for $({\bf x},t)\in\mathbb{R}^{2}_+\times(0,+\infty)$. Here ${\bf v}_0$ is the Leray-Helmholtz projection of ${\bf u}_0$ on divergence-free vector fields.

\subsection{Main results}
Before stating our main results, we first formulate the notations and conventions used throughout this paper. We denote by $C$ a generic positive constant which may vary at different places. The symbol $\Box$ denotes the end of a proof and $a\triangleq b$ means $a=b$ by definition. For $1\le p\le \infty$ and integer $k\ge 0$, we denote the standard Sobolev spaces as follows:
\begin{align*}
\begin{cases}
L^p=L^p(\mathbb{R}^2_+),\ \ W^{k, p}=W^{k, p}(\mathbb{R}^2_+),\ \ H^k=W^{k, 2}, \\
D^{1,p}=\{f\in L_{\loc}^1(\mathbb{R}^2_+):\|\nabla f\|_{L^p}<\infty\},\ \
\widetilde{H}^1=\big\{\mathbf{f}\in H^1:(\mathbf{f}\cdot\mathbf{n})|_{\partial\mathbb{R}^2_+}=0\big\}.
\end{cases}
\end{align*}
In addition, for any $f\in L^1_{\loc}(\mathbb R^2_+)$, we define its mollification by $[f]_\epsilon\triangleq j_\epsilon*f$, where $j_\epsilon=j_\epsilon({\bf x})$ is the standard mollifier with width $\epsilon$.
For two $n\times n$ matrices $A=\{a_{ij}\}$ and $B=\{b_{ij}\}$, the symbol $A: B$ represents the trace
of $AB$, that is,
\begin{equation*}
A:B\triangleq\operatorname{tr}(AB)=\sum_{i,j=1}^na_{ij}b_{ji}.
\end{equation*}
For $\alpha\in (0,1]$, the H\" older seminorm of a function ${\bf v}:U\subseteq\overline{\mathbb{R}^2_+}\rightarrow \mathbb R^2$ is defined by
\begin{align*}
\langle {\bf v}\rangle^\alpha_{U}
=\sup\limits_{\substack{{\bf x}, {\bf y}\in U\\ {\bf x}\neq {\bf y}}}
\frac{|{\bf v}({\bf x})-{\bf v}({\bf y})|}{|{\bf x}-{\bf y}|^\alpha}.
\end{align*}

Furthermore, we denote by
\begin{align}\label{z1.5}
\begin{cases}
\dot{f}\triangleq f_t+\mathbf{u}\cdot\nabla f,\\
F\triangleq(2\mu+\lambda)\divv\mathbf{u}-(P(\rho)-P(\tilde{\rho})),\\
\omega\triangleq\curl\mathbf{u}=-\nabla^\bot\cdot\mathbf{u}=-\partial_2u^1+\partial_1u^2,
\end{cases}
\end{align}
which express the material derivative of $f$, the effective viscous flux, and the vorticity, respectively. For simplicity, we write
\begin{align*}
\int f \mathrm{d}\mathbf{x}=\int_{\mathbb{R}^2_+} f \mathrm{d}\mathbf{x}, \ \ f_i=\partial_if=\frac{\partial f}{\partial x_i}.
\end{align*}
Moreover, we introduce the Helmholtz projection
\begin{equation*}
\mathcal{P}\triangleq \text{Id}+\nabla(-\Delta)^{-1}\divv
\end{equation*}
onto the subspace of divergence-free vector fields $\mathbf{u}$ with the no-penetration boundary condition and define $\mathcal{Q}\triangleq \text{Id}-\mathcal{P}$, where both operators map $L^p$ into itself for any $1<p<\infty$.

We recall the definition of weak solutions to the problem \eqref{a1}--\eqref{a4} in the sense of \cite{Hoff05,NS04}.
\begin{definition}\label{d1.1}
A pair $(\rho, \mathbf{u})$ is said to be a weak solution to the problem \eqref{a1}--\eqref{a4} provided that
\begin{equation*}
  \rho-\tilde{\rho}\in C([0,\infty);H^{-1}(\mathbb{R}^2_+)),\ \
  \rho\mathbf{u}\in C([0,\infty);\widetilde{H}^{1}(\mathbb{R}^2_+)^*),\ \
  \nabla\mathbf{u}\in L^2(\mathbb{R}^2_+\times(0,\infty))
\end{equation*}
with $(\rho,\mathbf{u})|_{t=0}=(\rho_0,\mathbf{u}_0)$, where $\widetilde{H}^{1}(\mathbb{R}^2_+)^*$ is the dual of $\widetilde{H}^{1}(\mathbb{R}^2_+)$.
 Moreover,
for any $t_2\geq t_1\geq 0$ and any test function $(\phi,\boldsymbol\psi)(\mathbf{x},t)\in C^1\big(\overline{\mathbb{R}^2_+}\times[t_1,t_2]\big)$, with uniformly bounded
support in $\mathbf{x}$ for $t\in[t_1,t_2]$ and satisfying $(\boldsymbol\psi\cdot\mathbf{n})|_{\partial\mathbb{R}^2_+}=0$, the following identities hold\footnote{Throughout this paper, we will use the Einstein summation over repeated indices convention.}:
\begin{gather*}
\int_{\mathbb{R}^2_+}\rho(\mathbf{x},\cdot)\phi(\mathbf{x},\cdot)
\mathrm{d}\mathbf{x}\Big|_{t_1}^{t_2}=\int_{t_1}^{t_2}\int_{\mathbb{R}^2_+}(\rho\phi_t+
\rho\mathbf{u}\cdot\nabla\phi)\mathrm{d}\mathbf{x}\mathrm{d}t,\label{z1.7}\\
\int_{\mathbb{R}^2_+}(\rho\mathbf{u}\cdot\boldsymbol{\psi})
(\mathbf{x},\cdot)\mathrm{d}\mathbf{x}\Big|_{t_1}^{t_2}=\int_{t_1}^{t_2}
\int_{\mathbb{R}^2_+}\big(\rho\mathbf{u}\cdot\boldsymbol{\psi}_t+
\rho u^i\mathbf{u}\cdot\partial_i\boldsymbol{\psi}+P\divv\boldsymbol{\psi}\big)\mathrm{d}\mathbf{x}\mathrm{d}t\notag\\
\quad-\int_{t_1}^{t_2}
\int_{\mathbb{R}^2_+}\big(\mu\partial_i\mathbf{u}\cdot\partial_i\boldsymbol{\psi}+(\mu+\lambda)\divv\mathbf{u}\divv\boldsymbol{\psi}\big)\mathrm{d}\mathbf{x}\mathrm{d}t
-\int_{t_1}^{t_2}
\int_{\partial\mathbb{R}^2_+}\mu\mathbf{u}\cdot\boldsymbol{\psi}\mathrm{ds}\mathrm{d}t.
\end{gather*}
\end{definition}
For the initial data $(\rho_0,\mathbf{u}_0)$, assume that there exist two positive constants $\hat{\rho}$ and $M$ (not necessarily small) such that
\begin{gather}
0\leq\inf\rho_0\leq\sup\rho_0\leq\hat{\rho},
 \label{z1.8}\\
\mathbf{u}_0\in \widetilde{H}^1,\  C_0+\mu\|\nabla\mathbf{u}_0\|_{L^2}^2+(\mu+\lambda)\|\divv\mathbf{u}_0\|_{L^2}^2\leq M.\label{z1.9}
\end{gather}
From \eqref{z1.6}, one can see that there exists a positive constant $C$ depending only on $\tilde{\rho}$ and $\hat{\rho}$ such that
\begin{equation}\label{z1.10}
  \frac{1}{C(\tilde{\rho},\hat{\rho})}(\rho-\tilde{\rho})^2\leq G(\rho)\leq C(\tilde{\rho},\hat{\rho})(\rho-\tilde{\rho})^2,
\end{equation}
where and in what follows we sometimes use $C(f)$ to emphasize the dependence on $f$.

Now we state our first result concerning the global existence of weak solutions.

\begin{theorem}\label{t1.1}
Let \eqref{z1.8} and \eqref{z1.9} be satisfied, there exists a positive number $D$ depending only
on $\tilde{\rho}$, $\hat{\rho}$, $a$, $\gamma$, and $\mu$ such that if
\begin{equation}\label{lam}
\lambda\geq\exp\bigg\{(2+M)^{e^{D(1+C_0)^5}}\bigg\},
\end{equation}
then the problem \eqref{a1}--\eqref{a4} admits a global weak solution $(\rho,\mathbf{u})$ in the sense of Definition $\ref{d1.1}$ satisfying
\begin{equation}\label{reg}
\begin{cases}0\leq\rho(\mathbf{x},t)\leq2\hat{\rho}~a.e.~\mathrm{on}~\mathbb{R}^2_+\times[0,\infty),\\
(\rho-\tilde{\rho},\sqrt{\rho}\mathbf{u})\in C([0,\infty);L^2(\mathbb{R}^2_+)),~\mathbf{u}\in L^\infty(0,\infty;H^1(\mathbb{R}^2_+)),\\
(\nabla^2\mathcal{P}\mathbf{u},\nabla F,\sqrt{\rho}\dot{\mathbf{u}})\in L^2(\mathbb{R}^2_+\times(0,\infty)),\\
\sigma^{\frac{1}{2}}\sqrt{\rho}\dot{\mathbf{u}}\in L^\infty(0,\infty;L^2(\mathbb{R}^2_+)),~ \sigma^{\frac{1}{2}}\nabla\dot{\mathbf{u}}\in L^2(\mathbb{R}^2_+\times(0,\infty)),
\end{cases}
\end{equation}
where $\sigma\triangleq\min\{1,t\}$.
\end{theorem}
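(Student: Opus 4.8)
The plan is to follow the classical three-step programme: regularize the data and construct smooth approximate solutions, derive a priori bounds that are uniform in the regularization parameter and---once \eqref{lam} holds---independent of $\lambda$, and finally pass to the limit. Concretely, I would mollify $(\rho_0,\mathbf{u}_0)$ into smooth data with the approximate density bounded below by a positive constant and $\mathbf{u}_0^\varepsilon\in H^\infty$ satisfying $(\mathbf{u}_0^\varepsilon\cdot\mathbf{n})|_{\partial\mathbb{R}^2_+}=0$, invoke the known local existence theory for strong solutions of \eqref{a1}--\eqref{a4}, and then spend the bulk of the work on a priori estimates that allow continuation of these solutions to $[0,\infty)$. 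The limit passage at the end is comparatively soft, precisely because the uniform estimates behind \eqref{reg} already provide strong convergence of the density.

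The a priori estimates would be organized through a continuity (bootstrap) argument: on a maximal interval $[0,T]$ one assumes both $\rho\le 2\hat\rho$ and an auxiliary bound $\sup_{0\le t\le T}\|\nabla\mathbf{u}\|_{L^2}^2\le K_0$ with $K_0=K_0(\tilde\rho,\hat\rho,a,\gamma,\mu,C_0)$, and then improves both \emph{strictly}. The ingredients, in order, are: (i) the basic energy identity $\sup_t\int(\tfrac12\rho|\mathbf{u}|^2+G(\rho))\,\mathrm{d}\mathbf{x}+\int_0^T\!\!\int(\mu|\nabla\mathbf{u}|^2+(\mu+\lambda)(\divv\mathbf{u})^2)\,\mathrm{d}\mathbf{x}\,\mathrm{d}t\le C_0$; (ii) the elliptic estimates for the effective viscous flux and the vorticity, based on $\Delta F=\divv(\rho\dot{\mathbf{u}})$ with a Neumann-type boundary condition and $\mu\Delta\omega=\curl(\rho\dot{\mathbf{u}})$ with $\omega=-u^1$ on $\partial\mathbb{R}^2_+$ (using that $\mathbf{u}\cdot\mathbf{n}=0$ and $u^1=\partial_2u^1$ there, and that $\dot{\mathbf{u}}\cdot\mathbf{n}=0$ on the boundary), which bound $\|\nabla F\|_{L^2}$, $\|\nabla\omega\|_{L^2}$, $\|\nabla^2\mathcal{P}\mathbf{u}\|_{L^2}$ and, through $(2\mu+\lambda)\divv\mathbf{u}=F+(P-P(\tilde\rho))$, the norms of $\nabla\mathbf{u}$, in terms of $\|\sqrt\rho\dot{\mathbf{u}}\|_{L^2}$ plus lower-order quantities; (iii) testing the momentum equation by $\dot{\mathbf{u}}$ to bound $\sup_t\|\nabla\mathbf{u}\|_{L^2}^2+\int_0^T\!\!\int\rho|\dot{\mathbf{u}}|^2$, and its $\sigma$-weighted version (differentiating the momentum equation and testing by $\sigma\dot{\mathbf{u}}$) to bound $\sup_t\sigma\|\sqrt\rho\dot{\mathbf{u}}\|_{L^2}^2+\int_0^T\sigma\|\nabla\dot{\mathbf{u}}\|_{L^2}^2$; (iv) recovering $\rho<2\hat\rho$ from the continuity equation along particle trajectories, where $\tfrac{\mathrm{d}}{\mathrm{d}t}\ln\rho=-(2\mu+\lambda)^{-1}\big(F+P(\rho)-P(\tilde\rho)\big)$, by a Zlotnik-type comparison exploiting the damping furnished by the pressure term. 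The mechanism that tolerates arbitrarily large initial data is that $\divv\mathbf{u}=O(\lambda^{-1})$, so density oscillations accumulate only over a time of order $\ln\lambda$, while the genuinely two-dimensional, borderline nonlinearities appearing in (iii)--(iv) are absorbed via a Desjardins-type logarithmic interpolation inequality on $\mathbb{R}^2_+$; carrying the resulting logarithmic loss through the Grönwall-type steps is what forces the iterated-exponential lower bound \eqref{lam}, each round of the argument costing one exponential.

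For the limit passage I would extract a subsequence of approximate solutions converging weakly-$\ast$ in the function spaces dictated by the uniform estimates, pass to the limit in the weak formulation of Definition~\ref{d1.1}, and read off \eqref{reg} from weak lower semicontinuity together with those estimates. The only non-routine point---identification of the pressure limit---follows from strong $L^2_{\loc}$-convergence of $\rho-\tilde\rho$, which here is obtained directly from the uniform bound $0\le\rho\le2\hat\rho$, the $L^2(\mathbb{R}^2_+\times(0,\infty))$ bound on $\nabla F$ and the renormalized continuity equation, rather than the full Lions--Feireisl oscillation-defect machinery. The temporal continuity $(\rho-\tilde\rho,\sqrt\rho\mathbf{u})\in C([0,\infty);L^2)$ then follows from the equations and the energy bounds via an Aubin--Lions-type argument.

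I expect the main obstacle to be the systematic handling of the boundary terms produced by the slip condition \eqref{a3} throughout the higher-order estimates. Unlike in the $\mathbb{R}^2$ Cauchy problem of \cite{HWZ2}, the elliptic problems for $F$ and $\omega$ now carry boundary data, $\dot{\mathbf{u}}$ does not obey the same boundary condition as $\mathbf{u}$, and testing the momentum equation by $\dot{\mathbf{u}}$ (or $\sigma\dot{\mathbf{u}}$) generates surface integrals over $\partial\mathbb{R}^2_+$---including $\int_{\partial\mathbb{R}^2_+}\mathbf{u}\cdot\boldsymbol\psi\,\mathrm{ds}$-type contributions inherited from Definition~\ref{d1.1}---that must be controlled by trace and interpolation inequalities without corrupting the delicate bookkeeping of $\lambda$-powers. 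Showing that every such boundary contribution is absorbable, exploiting both the $O(\lambda^{-1})$ smallness of $\divv\mathbf{u}$ and the algebraic identity $\omega=-u^1$ on $\partial\mathbb{R}^2_+$, is the crux that distinguishes this result from its whole-space predecessor.
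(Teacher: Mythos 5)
Your high-level architecture (mollify, invoke local existence, establish uniform a priori bounds via a bootstrap and a Desjardins-type logarithmic interpolation inequality, pass to the limit) matches the paper, and you correctly sense that the crux is (a) converting the $O(\lambda^{-1})$ smallness of $\divv\mathbf{u}$ into a time-independent density bound and (b) taming the slip-boundary surface integrals. But there are two genuine gaps that would stop the argument as you have organized it.

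\textbf{The bootstrap quantity is wrong.} You propose to bootstrap on $\sup_{0\le t\le T}\|\nabla\mathbf{u}\|_{L^2}^2\le K_0$ with $K_0$ depending only on $(\tilde\rho,\hat\rho,a,\gamma,\mu,C_0)$. First, this cannot even hold at $t=0$ unless $K_0$ also depends on $M$ (which contains $\|\nabla\mathbf{u}_0\|_{L^2}^2$). More importantly, when you test the momentum equation by $\mathbf{u}_t$ to improve the $\|\nabla\mathbf{u}\|_{L^2}$ bound, the right-hand side of the resulting differential inequality carries terms of the form $\frac{1}{(2\mu+\lambda)^q}\|P-P(\tilde\rho)\|_{L^4}^4$, and a pointwise-in-time bound $\|P-P(\tilde\rho)\|_{L^4}^4\le C(\hat\rho)C_0$ (which is all your hypothesis gives you) produces, after Gr\"onwall, a bound that grows like $T$. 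The paper instead bootstraps on the \emph{time-integrated} quantity $\frac{1}{(2\mu+\lambda)^2}\int_0^T\|P-P(\tilde\rho)\|_{L^4}^4\,\mathrm{d}t\le 2$, which is exactly what is needed in the Gr\"onwall step; it is renewed by a separate lemma (Lemma~\ref{l3.3}) that multiplies the pressure equation by $3(P-P(\tilde\rho))^2$ and uses the effective-viscous-flux damping $\divv\mathbf{u}=(2\mu+\lambda)^{-1}(F+P-P(\tilde\rho))$ to produce a \emph{time-independent} bound on that integral. Without this reorganization, the $T$-growth does not disappear and the continuation argument fails.

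\textbf{The boundary terms need a specific identity, not just $O(\lambda^{-1})$ smallness.} You identify the boundary terms as the obstacle, but the tools you name ($\omega=-u^1$ on $\partial\mathbb{R}^2_+$ and the $O(\lambda^{-1})$ smallness of $\divv\mathbf{u}$) only handle the lower-order ones. In the $\sigma$-weighted material-derivative estimate, the genuinely dangerous surface integrals involve \emph{only} the divergence-free part $\mathcal{P}\mathbf{u}$ (and its gradient), and these are not damped by $\lambda$ at all. The paper's resolution is the boundary identity
\[
(\mathcal{P}\mathbf{u})^{i}(\mathcal{P}\mathbf{u})_{i}^{j}n^{j}
=-(\mathcal{Q}\mathbf{u})^{i}(\mathcal{P}\mathbf{u})_{i}^{j}n^{j}
\quad\text{on }\partial\mathbb{R}^2_+,
\]
which follows from $\mathbf{u}\cdot\nabla(\mathcal{P}\mathbf{u})\cdot\mathbf{n}=0$ on the flat boundary, combined with a careful placement of the time derivative so that the $\mathcal{Q}\mathbf{u}$ factor produced by this identity appears at the level of $\nabla\mathcal{Q}\mathbf{u}$ (controllable through $\lambda$ via $\nabla\mathcal{Q}\mathbf{u}=\nabla\Delta^{-1}\nabla\divv\mathbf{u}$) and \emph{not} at the level of $\nabla\mathcal{Q}\mathbf{u}_t$ (which is not). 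Relatedly, the paper tests with the ``material divergence'' $\divf\mathbf{u}=\divv\mathbf{u}_t+\mathbf{u}\cdot\nabla\divv\mathbf{u}$ rather than $\divv\dot{\mathbf{u}}$, precisely so that the bulk viscosity multiplies a square and can be isolated. None of this is visible in your outline, and without it the hardest boundary contributions remain unabsorbed.
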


The next result will treat the incompressible limit (characterised by the large value of the bulk viscosity) of the global weak solutions established in Theorem \ref{t1.1}.

\begin{theorem}\label{t1.2}
Let $\{(\rho^\lambda,{\bf u}^\lambda)({\bf x},t)\}$ be the family of solutions obtained in Theorem \ref{t1.1}. Then, there exists a subsequence $\{\lambda_k\}$
with $\lambda_k\rightarrow\infty$ such that
\begin{align}\label{1.13}
 \rho^{\lambda_{k}}\rightarrow \varrho ~~&\text{strongly in} ~ L^2(K), \ \
 \text{for any compact set} \ K \subset \mathbb{R}^2_+ \text{ and any} \ t\ge 0 ,\\
 {\bf u}^{\lambda_{k}}\rightarrow {\bf v}
 ~~&\text{uniformly on compact sets in}~\mathbb R^2_+\times(0,\infty),\notag
\end{align}
where $(\varrho,{\bf v})$ is a global weak solution to the inhomogeneous incompressible Navier--Stokes equations \eqref{a5} in the sense of Definition \ref{d1.2} below.
\end{theorem}

\begin{definition}\label{d1.2}
A pair $(\varrho, {\bf v})$ is said to be a weak solution to the problem \eqref{a5} provided that
\begin{gather}
\varrho\in L^\infty(\mathbb R^2_+\times (0,\infty)),~~
\sqrt{\varrho}\mathbf{v}\in L^\infty([0,\infty); L^2(\mathbb{R}^2_+)),~~
\nabla{\bf v}\in L^2(\mathbb R^2_+\times (0,\infty)),\notag\\
(\varrho-\tilde{\rho})\in C([0,\infty);L^{2}(\mathbb R^2_+)),\label{1.14}\\
 {\bf v}\in L^2(\mathbb R^2_+\times (0,T)), ~~ \text{for any} \ T>0.\label{1.15}
\end{gather}
Moreover, for any $t_2\geq t_1\geq 0$ and any $C^1$ test function $(\phi,\boldsymbol\psi)$
just as in Definition \ref{d1.1}, which additionally satisfies $\divv\boldsymbol\psi(\cdot,t)=0$ on $\mathbb R^2_+\times[0,\infty)$, the following identities hold:
\begin{gather}
\int_{\mathbb{R}^2_+}\varrho(\mathbf{x},\cdot)\phi(\mathbf{x},\cdot)
\mathrm{d}\mathbf{x}\Big|_{t_1}^{t_2}=\int_{t_1}^{t_2}\int_{\mathbb{R}^2_+}(\varrho\phi_t+
\varrho\mathbf{v}\cdot\nabla\phi)\mathrm{d}\mathbf{x}\mathrm{d}t,\label{1.16}\\
\int_{\mathbb{R}^2_+}(\varrho\mathbf{v}\cdot\boldsymbol{\psi})
(\mathbf{x},\cdot)\mathrm{d}\mathbf{x}\Big|_{t_1}^{t_2}=\int_{t_1}^{t_2}
\int_{\mathbb{R}^2_+}\big(\varrho\mathbf{v}\cdot\boldsymbol{\psi}_t+
\varrho v^i\mathbf{v}\cdot\partial_i\boldsymbol{\psi}-\mu\partial_i\mathbf{v}\cdot\partial_i\boldsymbol{\psi}\big)\mathrm{d}\mathbf{x}\mathrm{d}t
-\int_{t_1}^{t_2}
\int_{\partial\mathbb{R}^2_+}\mu\mathbf{v}\cdot\boldsymbol{\psi}\mathrm{ds}\mathrm{d}t.\label{1.17}
\end{gather}
\end{definition}

Several remarks are in order.

\begin{remark}
It should be noted that Theorem \ref{t1.1} holds for arbitrarily large initial energy as long as the bulk viscosity coefficient is suitably large, which is in sharp contrast to \cite{Hoff05,D12,P14} where the smallness condition on the initial energy is needed in order to obtain global solutions to the compressible Navier--Stokes equations in the half-space.
\end{remark}

\begin{remark}
Our work extends the significant results of Danchin and Mucha in \cite{DM17,DM23} to the half-plane, a domain with physical boundary to which their methods seem not to be directly applicable. Moreover, compared with \cite[Theorem 1.1 and Corollary 1.1]{DM17}, the density in our theorems is allowed to have large oscillation and vacuum states in interior regions. The assumption of zero initial total momentum in \cite[Theorems 2.1 and 2.3]{DM23} is also removed in our setting.
\end{remark}

\begin{remark}\label{r1.4}
The half-plane can be viewed as an intermediate case between the whole plane and bounded domains, where the emergence of the boundary introduces significant and unforeseen difficulties in applying the Hodge-type decomposition. Indeed, the intractable boundary integrals from the self-convection of $\mathcal{P}\mathbf{u}$ necessitate exploiting the boundary conditions and geometric flatness, and even extracting information from them concerning the bulk viscosity.
\end{remark}

\begin{remark}
As pointed out by Cramer \cite{CR}:``\textit{Several fluids, including common diatomic gases, are seen to have bulk viscosities which are hundreds or thousands of times larger than their shear viscosities}." That is, the large bulk viscosity coefficient is suitable for some physical models.
\end{remark}


\subsection{Strategy of the proof}

Now let us discuss the key issues and main mathematical difficulties in the proof. It should be required to develop global smooth approximate solutions through an intricate limiting process. Our strategy contains two fundamental components: an application of the local existence theory with strictly positive initial density and the utilization of a blow-up criterion for solutions (cf. Lemma \ref{l2.1}). This framework enables us to build approximate solutions before passing to the limit as the initial density's lower bound approaches zero. The cornerstone of our analysis lies in obtaining uniform {\it a priori} estimates for the bulk viscosity, that remain valid throughout the incompressible limit. It should be pointed out that the crucial techniques of proofs in \cite{Hoff05,D12,P14} cannot be adopted to the situation treated here, since their arguments rely heavily on the small initial energy.
Consequently, some new observations and ideas are needed to establish the desired {\it a priori} estimates under the assumption \eqref{lam}.

It was shown in \cite{SW11} that if $0<T^*<\infty$ is the maximal existence time of strong solutions to \eqref{a1}--\eqref{a4}, then
\begin{align*}
\limsup\limits_{T\nearrow T^*}\|\rho\|_{L^\infty(0,T;L^\infty)}=\infty,
\end{align*}
which implies that the key issue is to obtain the time-independent upper bound of the density. To this end, one of important steps is to derive time-independent upper bound of $\|\nabla\mathbf{u}\|_{L^2}^2$ and especially $(\mu+\lambda)\|\divv\mathbf{u}\|_{L^2}^2$ (see Lemma \ref{l3.2}). Inspired by \cite{P14}, we start with the basic energy estimate, in which we observe that the Lebesgue measure of $\|\nabla\mathbf{u}\|_{L^2}$ with respect to time should be small when $\|\nabla\mathbf{u}\|_{L^2}$ is suitably large. Based on this observation, we thus divide the arguments concerning the term $\int\rho \dot{\mathbf{u}}\cdot(\mathbf{u}\cdot\nabla)\mathbf{u}\mathrm{d}\mathbf{x}$ occurred in \eqref{z3.9} into two cases: $\|\nabla\mathbf{u}\|_{L^2}\leq1$ and $\|\nabla\mathbf{u}\|_{L^2}\geq1$, which can be controlled through $\frac{1}{(2\mu+\lambda)^2}\|P(\rho)-P(\tilde{\rho})\|_{L^4}^4$ (see  \eqref{z3.16}--\eqref{z3.18}) with the aid of a Desjardins-type logarithmic interpolation inequality (see Lemma $\ref{log}$). Inspired by \cite{HWZ}, this in turn motivates us to propose the \textit{a priori hypothesis} for the integrability in time of $\frac{1}{(2\mu+\lambda)^2}\|P(\rho)-P(\tilde{\rho})\|_{L^4}^4$ (see \eqref{z3.1}). To complete the proof of the \textit{a priori hypothesis} (that is, one needs to show \eqref{z3.2}), we observe from the definition of the \textit{effective viscous flux} in \eqref{z1.5}$_2$ that
\begin{equation*}
  \divv\mathbf{u}=\frac{-(-\Delta)^{-1}\divv(\rho\dot{\mathbf{u}})+P(\rho)-P(\tilde{\rho})}{2\mu+\lambda},
\end{equation*}
which further inspires us to establish the estimates on the material derivative of the velocity (see Lemmas $\ref{l3.2}$ and $\ref{l3.4}$).

However, since the bulk viscosity is often coupled with the divergence of the velocity while being inherently contained in the \textit{effective viscous flux}, it seems hard to extract information from $\divv \mathbf{u}$, and more fundamentally, from $\nabla\mathbf{u}$. Thus, some new difficulties will occur when one tackles the $L^\infty(0,\min\{1,T\};L^2)$-norm for the material derivative of the velocity (see Lemma $\ref{l3.4}$) in order to isolate $\lambda$ (see, e.g., \eqref{z3.35}).

First, it is very hard to obtain the estimates for the term
\begin{equation*}
  (\mu+\lambda)\sigma\int\dot{u}^j\left(\partial_j\divv\mathbf{u}_t +\divv(\mathbf{u}\partial_j\divv\mathbf{u})\right)\mathrm{d}\mathbf{x}.
\end{equation*}
To overcome this obstacle, motivated by \cite{HWZ}, we shall consider (see \eqref{z3.33})
\begin{equation*}
  (\mu+\lambda)\int(\divf\mathbf{u})^2\mathrm{d}\mathbf{x}~~~~
\text{rather than}
~~~~(\mu+\lambda)\int(\divv\dot{\mathbf{u}})^2\mathrm{d}\mathbf{x}.
\end{equation*}
But this makes the estimates more delicate and complicated (see \eqref{z3.34}--\eqref{z3.35}, or \eqref{z3.37}--\eqref{z3.39}).

Second, we have to use the Hodge-type decomposition to isolate the ``bad" terms (divergence-free part) from $\nabla\mathbf{u}$, which leads to the emergence of additional challenging terms. As stated in Remark \ref{r1.4}, the presence of the integral terms such as
\begin{equation*}
\int F(\mathcal{P}\mathbf{u})^i_j(\mathcal{P}\mathbf{u})^j_i\mathrm{d}\mathbf{x}
\end{equation*}
is analytically intractable because the divergence-free part
cannot restrain the bulk viscosity coefficient. To be concrete, only partial information (curl-free part) from $\|\nabla\mathbf{u}\|_{L^p}$ can be controlled in combination with the bulk viscosity.
To overcome it, we have to resort to boundary conditions for resolution. From the boundary condition \eqref{a3} and the flatness of the boundary, we observe that
\begin{align*}
\mathbf{u}\cdot\nabla(\mathcal{P}\mathbf{u})\cdot\mathbf{n}=(\mathcal{P}\mathbf{u})^{i}(\mathcal{P}\mathbf{u})_{i}^{j}n^j
  +(\mathcal{Q}\mathbf{u})^{i}(\mathcal{P}\mathbf{u})_{i}^{j}n^j=0 \ \ \text{on}\ \partial\mathbb{R}^2_+.
\end{align*}
Hence we can transfer the challenge to the treatment of the boundary terms (see \eqref{z3.43}--\eqref{z3.44})
\begin{align*}
\int_{\partial\mathbb{R}^2_+} F(\mathcal{P}\mathbf{u})\cdot\nabla(\mathcal{P}\mathbf{u})\cdot\mathbf{n}\mathrm{ds}
=-\int_{\partial\mathbb{R}^2_+} F(\mathcal{Q}\mathbf{u})\cdot\nabla(\mathcal{P}\mathbf{u})\cdot\mathbf{n}\mathrm{ds},
\end{align*}
which can be estimated by properly utilizing the curl-free part (see \eqref{z3.41} and \eqref{z3.43}). In particular, a crucial treatment (see \eqref{z3.39}) is required to generate the boundary term
\begin{align*}
\int_{\partial\mathbb{R}^2_+} F(\mathcal{P}\mathbf{u})\cdot\nabla(\mathcal{P}\mathbf{u})_t\cdot\mathbf{n}\mathrm{ds}~~~~
\text{rather than}
~~~~
\int_{\partial\mathbb{R}^2_+} F(\mathcal{P}\mathbf{u})_t\cdot\nabla(\mathcal{P}\mathbf{u})
\cdot\mathbf{n}\mathrm{ds},
\end{align*}
which manifests the higher regularity of weak solutions established here. The fundamental origin is that we can
extract the information concerning bulk viscosity from
$\|\nabla\mathcal{Q}\mathbf{u}\|_{L^p}$ rather than $\|\nabla\mathcal{Q}\mathbf{u}_t\|_{L^p}$ within this framework.
Then we succeed in deriving the desired estimates on $L^\infty(0,\min\{1,T\};L^2)$-norm (see Lemma $\ref{l3.4}$).

Having these time-independent estimates at hand, we can
obtain the time-independent upper bound of the density by applying Lagrangian
coordinates technique used in \cite{DE97} (see Lemma $\ref{l3.5}$). Consequently, the proof of \textit{a priori hypothesis} is complete once the bulk viscosity is properly large (see {\it Proof of Proposition \ref{p3.1}}). It should be emphasized that the effective viscous flux and Desjardins-type logarithmic interpolation inequality play essential roles in our analysis.

The rest of the paper is organized as follows. In the next section, we recall some known facts and elementary inequalities that will be used later. Section \ref{sec3} is devoted to establishing {\it a priori} estimates. The proofs of Theorems \ref{t1.1} and \ref{t1.2} are presented in Sections \ref{sec4} and \ref{sec5}, respectively.

\section{Preliminaries}\label{sec2}

In this section we collect some facts and elementary inequalities that will be used frequently later.

\subsection{Auxiliary results and inequalities}

In this subsection we review some known facts and inequalities. First of all, similar to the proofs in \cite{MN80,SW11}, we have the following results concerning the local existence and the possible breakdown of strong solutions to the problem \eqref{a1}--\eqref{a4}.
\begin{lemma}\label{l2.1}
Assume that
\begin{equation*}
(\rho_0-\tilde{\rho})\in H^{2},~~\inf\limits_{\mathbf{x}\in\mathbb{R}^2_+}\rho_0(\mathbf{x})>0,~~ \mathbf{u}_0\in H^2\cap \widetilde{H}^1,
\end{equation*}
then there exists a positive constant $T$ such that the problem \eqref{a1}--\eqref{a4} admits a unique strong solution $(\rho,\mathbf{u})$ satisfying
\begin{equation*}
(\rho-\tilde{\rho},\mathbf{u})\in C([0,T]; H^{2}),\ \ \inf_{\mathbb{R}^2_+\times[0,T]}\rho(\mathbf{x},t)\geq\frac{1}{2}
\inf_{\mathbf{x}\in\mathbb{R}^2_+}\rho_0(\mathbf{x})>0.
\end{equation*}
Moreover, if $T^*$ is the maximal time of existence, then it holds that
\begin{equation*}
\lim\sup_{T\nearrow T^*}\|\rho\|_{L^\infty(0,T;L^\infty)}=\infty.
\end{equation*}
\end{lemma}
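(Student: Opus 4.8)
The statement packages a short-time existence/uniqueness result together with a continuation criterion, and I would establish each by the classical arguments of \cite{MN80,SW11}, adapted to the half-plane and the slip condition \eqref{a3}.

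For \emph{local existence and uniqueness}, the plan is the usual linearization--iteration scheme. Starting from $\mathbf{u}^0\equiv\mathbf{u}_0$, given $\mathbf{u}^k$ I would first solve the linear transport equation
\begin{equation*}
\partial_t\rho^{k+1}+\mathbf{u}^k\cdot\nabla\rho^{k+1}+\rho^{k+1}\divv\mathbf{u}^k=0,\qquad \rho^{k+1}|_{t=0}=\rho_0,
\end{equation*}
and then the linear Lamé system
\begin{equation*}
\rho^{k+1}\big(\partial_t\mathbf{u}^{k+1}+\mathbf{u}^k\cdot\nabla\mathbf{u}^{k+1}\big)-\mu\Delta\mathbf{u}^{k+1}-(\mu+\lambda)\nabla\divv\mathbf{u}^{k+1}=-\nabla P(\rho^{k+1})
\end{equation*}
with datum $\mathbf{u}_0$ and boundary condition \eqref{a3}. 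Integrating the continuity equation along the flow of $\mathbf{u}^k$ gives $\inf\rho^{k+1}(\cdot,t)\ge(\inf\rho_0)\exp\!\big(-\int_0^t\|\divv\mathbf{u}^k\|_{L^\infty}\mathrm{d}s\big)$, so for $T$ small (depending only on the data) the iterates remain $\ge\frac12\inf\rho_0$; differentiating the transport equation bounds $\|\rho^{k+1}-\tilde\rho\|_{H^2}$ in terms of $\int_0^t\|\nabla\mathbf{u}^k\|_{H^2}\mathrm{d}s$, while elliptic regularity for the stationary Lamé operator under \eqref{a3}, together with testing the momentum equation by $\mathbf{u}^{k+1}_t$, closes a uniform bound for $\|\mathbf{u}^{k+1}\|_{L^\infty(0,T;H^2)}+\|\mathbf{u}^{k+1}_t\|_{L^2(0,T;H^1)}$ on an interval $[0,T]$ depending only on $\inf\rho_0$, $\|\rho_0-\tilde\rho\|_{H^2}$ and $\|\mathbf{u}_0\|_{H^2}$. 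With these uniform bounds I would then show $(\rho^{k+1}-\rho^k,\mathbf{u}^{k+1}-\mathbf{u}^k)$ is Cauchy in the low-order space $C([0,T];L^2)\cap L^2(0,T;\widetilde{H}^1)$ by Gronwall, interpolate with the $H^2$ bounds to get strong convergence in $C([0,T];H^{2-\delta})$ and weak-$*$ convergence in $L^\infty(0,T;H^2)$, and thereby produce the strong solution; continuity into $H^2$ (not merely weak-$*$) comes from a standard a posteriori energy argument, and uniqueness from the same difference estimate applied to two solutions.

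For the \emph{continuation criterion}, suppose $T^*<\infty$ is maximal and, for contradiction, $\bar\rho:=\limsup_{T\nearrow T^*}\|\rho\|_{L^\infty(0,T;L^\infty)}<\infty$. Since $\inf\rho_0>0$ there is no vacuum, so it suffices to prove $\sup_{t<T^*}\big(\|\rho-\tilde\rho\|_{H^2}+\|\mathbf{u}\|_{H^2}\big)<\infty$ together with a uniform positive lower bound for $\rho$ on $[0,T^*)$; restarting the local theory from a time close to $T^*$ then contradicts maximality. I would build the needed bounds in the standard chain: (i) the basic energy identity controls $\|\sqrt\rho\mathbf{u}\|_{L^\infty_tL^2}$ and $\|\nabla\mathbf{u}\|_{L^2(\mathbb{R}^2_+\times(0,T^*))}$; (ii) testing the momentum equation by $\dot{\mathbf{u}}$ and absorbing the boundary term from \eqref{a3} via $\mathbf{u}\cdot\mathbf{n}=0$ and the flatness of $\partial\mathbb{R}^2_+$ (as around \eqref{z3.43}) gives $\sup_{t<T^*}\|\nabla\mathbf{u}\|_{L^2}$ and the space--time bound for $\sqrt\rho\dot{\mathbf{u}}$ in terms of $\bar\rho$ and $C_0$; (iii) the elliptic identities $\Delta F=\divv(\rho\dot{\mathbf{u}})$ and $\mu\Delta\omega=\curl(\rho\dot{\mathbf{u}})$ with their induced boundary data turn these into bounds for $\|\nabla\mathbf{u}\|_{H^1}$; (iv) applying the operator $w\mapsto w_t+\divv(\mathbf{u}w)$ to each component of the momentum equation and testing by $\dot{\mathbf{u}}$ yields $\sup_{t<T^*}\|\sqrt\rho\dot{\mathbf{u}}\|_{L^2}$ and the space--time bound for $\nabla\dot{\mathbf{u}}$; (v) differentiating the continuity equation and running Gronwall with amplification $\int\|\nabla\mathbf{u}\|_{L^\infty}$, closed by a Beale--Kato--Majda/Brezis--Gallouet inequality of the form $\|\nabla\mathbf{u}\|_{L^\infty}\lesssim1+(\|F\|_{L^\infty}+\|\omega\|_{L^\infty})\log(e+\|\nabla^2\mathbf{u}\|_{L^4})$, gives $\|\nabla\rho\|_{L^2\cap L^\infty}$; (vi) a final elliptic bootstrap upgrades everything to the full $H^2$ bound.

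The step I expect to be the main obstacle is (v)--(vi): closing the coupled estimate for $\|\nabla\rho\|_{L^2\cap L^\infty}$ and $\|\nabla^2\mathbf{u}\|_{L^2}$ with only $\|\rho\|_{L^\infty}$ assumed a priori bounded. This needs simultaneously the effective-viscous-flux elliptic estimates — to trade $\nabla\mathbf{u}$ for the quantities $F$ and $\omega$ governed by $\rho\dot{\mathbf{u}}$ — and the logarithmic interpolation inequality, along with careful accounting of the boundary integrals the slip condition \eqref{a3} produces on the non-compact boundary $\partial\mathbb{R}^2_+$; it is exactly here that the two-dimensional setting of \cite{SW11} is used. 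By comparison, the local-existence half is routine once the solvability and energy estimates for the stationary Lamé operator under \eqref{a3} are in hand.
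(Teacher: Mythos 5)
Your proposal is correct and matches the route the paper implicitly prescribes: the paper offers no proof of Lemma~\ref{l2.1} and instead defers to \cite{MN80,SW11}, and your plan is precisely the Matsumura--Nishida linearization/iteration for local well-posedness combined with the Sun--Zhang effective-viscous-flux plus Brezis--Gallouet continuation argument, adapted to the half-plane slip condition \eqref{a3} by using the flatness of $\partial\mathbb{R}^2_+$ and $\mathbf{u}\cdot\mathbf{n}=0$ exactly as the paper itself does in Section~\ref{sec3}. You correctly identify the only genuinely new technical point relative to those references, namely supplying boundary data for the elliptic problems for $F$ and $\omega$ (e.g.\ $\omega+u^1=0$ on $\{x_2=0\}$, as in the proof of Lemma~\ref{E0}) and controlling the resulting boundary integrals on the unbounded boundary.
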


The following well-known Gagliardo--Nirenberg inequality (see \cite{NI1959}) will be used.
\begin{lemma}\label{GN}
For $p\in [2, \infty)$, $q\in(1, \infty)$, and $r\in (2, \infty)$, there exists some generic constant $C>0$ which may depend on $p$, $q$, and $r$ such that, for $f\in H^1$ and $g\in L^q\cap D^{1, r}$,
\begin{gather*}
\|f\|_{L^p}\leq C\|f\|_{L^2}^\frac{2}{p}\|\nabla f\|_{L^2}^{1-\frac{2}{p}},\ \ \
\|g\|_{L^\infty}\leq C\|g\|_{L^q}^\frac{q(r-2)}{2r+q(r-2)}\|\nabla g\|_{L^r}^\frac{2r}{2r+q(r-2)}.
\end{gather*}
\end{lemma}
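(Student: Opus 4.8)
The plan is to reduce both estimates to the whole plane $\mathbb{R}^2$ by even reflection across $\{x_2=0\}$, and then prove the two whole-space inequalities by elementary one-dimensional calculus; both are classical (Ladyzhenskaya-- and Gagliardo--Nirenberg--type), and the only real care is needed in the reduction step. Given $f\in H^1(\mathbb{R}^2_+)$, set $Ef(x_1,x_2)\triangleq f(x_1,|x_2|)$. The classical even-extension lemma gives $Ef\in H^1(\mathbb{R}^2)$ with $\|Ef\|_{L^s(\mathbb{R}^2)}^s=2\|f\|_{L^s(\mathbb{R}^2_+)}^s$ for all $s\in[1,\infty)$ and $\|\nabla Ef\|_{L^2(\mathbb{R}^2)}^2=2\|\nabla f\|_{L^2(\mathbb{R}^2_+)}^2$, because the even reflection is continuous across $\{x_2=0\}$ (in the trace sense) and therefore contributes no singular measure to $\nabla(Ef)$. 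For $g\in L^q\cap D^{1,r}$ with $r>2$, one first invokes Morrey's embedding $W^{1,r}_{\loc}\hookrightarrow C^{0,1-2/r}_{\loc}$, valid up to the flat boundary since a half-ball is a bounded Lipschitz domain, to obtain a continuous representative of $g$ with a continuous boundary trace; the same integration-by-parts check as above then shows $Eg\in L^q(\mathbb{R}^2)\cap D^{1,r}(\mathbb{R}^2)$ with $\|Eg\|_{L^q(\mathbb{R}^2)}^q=2\|g\|_{L^q(\mathbb{R}^2_+)}^q$ and $\|\nabla Eg\|_{L^r(\mathbb{R}^2)}^r=2\|\nabla g\|_{L^r(\mathbb{R}^2_+)}^r$. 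Since $\|f\|_{L^p(\mathbb{R}^2_+)}\le\|Ef\|_{L^p(\mathbb{R}^2)}$ and $\|g\|_{L^\infty(\mathbb{R}^2_+)}=\|Eg\|_{L^\infty(\mathbb{R}^2)}$, it suffices to establish both inequalities on $\mathbb{R}^2$, with all constants depending only on $p$, $q$, $r$.

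For the first inequality on $\mathbb{R}^2$ I would argue by density, taking $u\in C_c^\infty(\mathbb{R}^2)$. For any real exponent $m\ge1$, integrating $\partial_1(|u|^m)$ and $\partial_2(|u|^m)$ along lines gives $|u(x_1,x_2)|^{m}\le m\int_{\mathbb{R}}|u|^{m-1}|\partial_1u|\,\mathrm{d}x_1$ and $|u(x_1,x_2)|^{m}\le m\int_{\mathbb{R}}|u|^{m-1}|\partial_2u|\,\mathrm{d}x_2$; multiplying these two bounds, integrating over $\mathbb{R}^2$, and applying Fubini and the Cauchy--Schwarz inequality yields $\|u\|_{L^{2m}}^{2m}\le m^2\|u\|_{L^{2(m-1)}}^{2(m-1)}\|\nabla u\|_{L^2}^{2}$. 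Taking $m=k+1$ and iterating from the trivial case $k=1$, one obtains for every integer $k\ge1$ that $\|u\|_{L^{2k}}\le C_k\|u\|_{L^2}^{1/k}\|\nabla u\|_{L^2}^{1-1/k}$, i.e.\ the desired inequality for even exponents $p=2k$. For a general $p\in[2,\infty)$, pick an integer $k$ with $2k\ge p$ and interpolate $\|u\|_{L^p}\le\|u\|_{L^2}^{1-\beta}\|u\|_{L^{2k}}^{\beta}$ with $\tfrac1p=\tfrac{1-\beta}{2}+\tfrac{\beta}{2k}$; substituting the even-exponent bound and using $1-\beta+\tfrac{\beta}{k}=\tfrac2p$ makes the exponents collapse to exactly $\tfrac2p$ and $1-\tfrac2p$. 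Finally, if $u_n\to u$ in $H^1(\mathbb{R}^2)$ with $u_n\in C_c^\infty(\mathbb{R}^2)$, then along a subsequence $u_n\to u$ a.e., and Fatou's lemma transfers the inequality to all of $H^1(\mathbb{R}^2)$.

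For the second inequality on $\mathbb{R}^2$, fix $x_0\in\mathbb{R}^2$ and $\rho>0$, and write $\overline{g}_{\rho}\triangleq|B_\rho(x_0)|^{-1}\int_{B_\rho(x_0)}g\,\mathrm{d}\mathbf{x}$. Morrey's inequality gives $|g(x_0)-\overline{g}_{\rho}|\le C\rho^{1-2/r}\|\nabla g\|_{L^r}$, while Hölder's inequality gives $|\overline{g}_{\rho}|\le|B_\rho(x_0)|^{-1/q}\|g\|_{L^q}=C\rho^{-2/q}\|g\|_{L^q}$; hence $|g(x_0)|\le C\big(\rho^{(r-2)/r}\|\nabla g\|_{L^r}+\rho^{-2/q}\|g\|_{L^q}\big)$ for every $\rho>0$. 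Minimizing the right-hand side over $\rho>0$ through the elementary bound $\min_{\rho>0}(a\rho^{\alpha}+b\rho^{-\beta})\le C(\alpha,\beta)\,a^{\beta/(\alpha+\beta)}b^{\alpha/(\alpha+\beta)}$ (for $a,b,\alpha,\beta>0$), with $\alpha=(r-2)/r$ and $\beta=2/q$, and computing $\tfrac{\alpha}{\alpha+\beta}=\tfrac{q(r-2)}{2r+q(r-2)}$, $\tfrac{\beta}{\alpha+\beta}=\tfrac{2r}{2r+q(r-2)}$, gives $|g(x_0)|\le C\|g\|_{L^q}^{\frac{q(r-2)}{2r+q(r-2)}}\|\nabla g\|_{L^r}^{\frac{2r}{2r+q(r-2)}}$; since $g$ has a continuous representative, taking the supremum over $x_0$ yields the claim (the degenerate cases $\nabla g\equiv0$ or $g\equiv0$ being trivial, as a constant function in $L^q$ must vanish).

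The main obstacle is not in the two whole-space computations, which are routine, but in the reduction: one must justify that the even extension of a $D^{1,r}$ function is a genuine element of $D^{1,r}(\mathbb{R}^2)$, i.e.\ that no singular part is created along the boundary $\{x_2=0\}$. This is precisely where the hypothesis $r>2$ enters, through Morrey's embedding, which supplies the continuous boundary trace needed to run the integration-by-parts argument; the analogous statement for the $H^1$-extension in the first inequality is classical. The remaining bookkeeping — the iterated one-dimensional estimate, the Hölder interpolation of exponents, the optimization in $\rho$, and the $C_c^\infty$-density/Fatou passage — is standard and produces constants depending only on $p$, $q$, $r$, as required.
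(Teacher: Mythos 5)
Your proof is correct. Note, however, that the paper does not prove this lemma at all: it is stated as the classical Gagliardo--Nirenberg inequality and simply cited from Nirenberg's 1959 paper, so there is no internal argument to compare against; what you have supplied is a self-contained elementary derivation, and it goes through. The reduction to $\mathbb{R}^2$ by even reflection is sound (the reflected tangential derivative is even, the normal one odd, so the gradient acquires no singular part and all norms double), the Ladyzhenskaya-type iteration $\|u\|_{L^{2m}}^{2m}\leq m^2\|u\|_{L^{2(m-1)}}^{2(m-1)}\|\nabla u\|_{L^2}^2$ plus H\"older interpolation gives exactly the exponents $\tfrac2p$ and $1-\tfrac2p$, and the Morrey-plus-mean-value optimization in $\rho$ reproduces the stated exponents $\tfrac{q(r-2)}{2r+q(r-2)}$ and $\tfrac{2r}{2r+q(r-2)}$, with the degenerate cases correctly dismissed since a nonzero constant is not in $L^q(\mathbb{R}^2_+)$. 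Two minor remarks: the inclusion $L^q\cap D^{1,r}\subset W^{1,r}_{\loc}(\overline{\mathbb{R}^2_+})$, which you use before invoking Morrey, deserves the one-line justification that on a half-ball $g-\bar g$ is bounded by Poincar\'e--Sobolev (since $r>2$), hence $g\in L^r_{\loc}$; and the even-extension lemma for Sobolev functions holds for every $1\leq p<\infty$ without any continuity of the trace, so the appeal to Morrey there is harmless but not needed. Neither point affects the validity of the argument, and all constants depend only on $p$, $q$, $r$ as required.
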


Next, we have the following Desjardins-type logarithmic interpolation inequality, which extends the case of two-dimensional torus $\mathbb{T}^2$ in \cite[Lemma 2]{DE97} (see also \cite[Lemma 1]{D1997}) to the half-plane.
\begin{lemma}\label{log}
Assume that $0\le\rho\le\hat{\rho}$ and $\mathbf{u}\in H^1(\mathbb{R}^2_+)$ satisfies the boundary condition \eqref{a3}, then it holds that
\begin{equation}\label{z2.1}
\|\sqrt\rho\mathbf{u}\|_{L^4}^2\leq C(\hat{\rho})(1+\|\sqrt\rho\mathbf{u}\|_{L^2})\|\mathbf{u}\|_{H^1}
\sqrt{\ln\big(2+\|\mathbf{u}\|_{H^1}^2\big)}.
\end{equation}
\end{lemma}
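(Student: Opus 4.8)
The plan is to follow the classical Desjardins strategy, adapting it to the half-plane $\mathbb{R}^2_+$ with the slip boundary condition \eqref{a3}. The key point is that on $\mathbb{T}^2$ the proof in \cite{DE97} exploits the Littlewood--Paley/Fourier decomposition of $\mathbf{u}$ together with the bound $\|\mathbf{u}\|_{L^\infty}\lesssim \|\mathbf{u}\|_{H^1}\sqrt{\ln(2+\|\mathbf{u}\|_{H^s})}$ for $s>1$, and then absorbs the higher norm by an $L^\infty$--$L^2$ interaction. To run the same argument here, first I would reduce to the whole plane by a reflection (extension) operator: the boundary condition \eqref{a3} reads $u^2=0$ and $u^1=u^1_{x_2}$ on $x_2=0$, so one extends $u^2$ oddly and $u^1$ evenly across $\{x_2=0\}$ (the Robin-type condition on $u^1$ is compatible with an even extension up to a lower-order correction, or one first subtracts a harmonic corrector; alternatively one notes \eqref{z2.1} only needs $\mathbf{u}\cdot\mathbf{n}=0$ on the boundary, which is the clean condition that makes the even/odd extension $H^1$-bounded). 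The extended field $\tilde{\mathbf{u}}$ satisfies $\|\tilde{\mathbf{u}}\|_{H^1(\mathbb{R}^2)}\le C\|\mathbf{u}\|_{H^1(\mathbb{R}^2_+)}$, and $\|\sqrt{\tilde\rho}\tilde{\mathbf{u}}\|_{L^p(\mathbb{R}^2)}^p = 2\|\sqrt\rho\mathbf{u}\|_{L^p(\mathbb{R}^2_+)}^p$ after an even extension of $\rho$ (still bounded by $\hat\rho$), so it suffices to prove the inequality on $\mathbb{R}^2$.

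On $\mathbb{R}^2$ I would argue as follows. Write $\mathbf{v}=\sqrt\rho\,\mathbf{u}$ (abusing notation, the extended fields). Then
\begin{equation*}
\|\mathbf{v}\|_{L^4}^2 \le \|\sqrt\rho\|_{L^\infty}\,\|\mathbf{u}\|_{L^4}\,\|\mathbf{v}\|_{L^4}
\le \hat\rho^{1/2}\,\|\mathbf{u}\|_{L^4}\,\|\mathbf{v}\|_{L^4},
\end{equation*}
which is circular; instead the useful route is to interpolate $\|\mathbf{v}\|_{L^4}^2\le \|\mathbf{v}\|_{L^2}\|\mathbf{v}\|_{L^\infty}$ and then bound $\|\mathbf{v}\|_{L^\infty}\le \hat\rho^{1/2}\|\mathbf{u}\|_{L^\infty}$. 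So the heart of the matter is the logarithmic $L^\infty$ bound
\begin{equation*}
\|\mathbf{u}\|_{L^\infty(\mathbb{R}^2)} \le C\,\|\mathbf{u}\|_{H^1}\sqrt{\ln\!\big(2+\|\mathbf{u}\|_{H^1}^2 + e\big)}
\end{equation*}
\emph{provided one can also feed in that $\mathbf{u}\in H^s$ for some $s>1$} --- but of course $\mathbf{u}$ here is only $H^1$. The standard fix (Brezis--Gallouet--Wainger, and Desjardins' refinement) is to split $\mathbf{u}$ in frequency at a threshold $N$: $\|\mathbf{u}\|_{L^\infty}\lesssim \sum_{j\le N}\|\Delta_j\mathbf{u}\|_{L^\infty} + \sum_{j>N}\|\Delta_j\mathbf{u}\|_{L^\infty}$. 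The low-frequency sum is $\lesssim \sqrt N\,\|\mathbf{u}\|_{L^2} + N\,\|\nabla\mathbf{u}\|_{L^2}$ using Bernstein in $2$D; wait, more precisely $\|\Delta_j\mathbf{u}\|_{L^\infty}\lesssim 2^j\|\Delta_j\mathbf{u}\|_{L^2}$, so $\sum_{j\le N}\lesssim \|\mathbf{u}\|_{H^1}\sqrt{N}$ by Cauchy--Schwarz, while the high-frequency tail needs an $H^{1+\varepsilon}$ norm. To avoid invoking a norm we do not control, one instead localizes the inequality in physical space at scale related to $\|\nabla\mathbf{u}\|_{L^2}$: for any $r>0$, $\|\mathbf{u}\|_{L^\infty(B_r(x))}^2 \lesssim r^{-2}\|\mathbf{u}\|_{L^2(B_r)}^2 + |\ln r|\,\|\nabla\mathbf{u}\|_{L^2(B_r)}^2 + \|\nabla\mathbf{u}\|_{L^2(B_r)}^2$ (a $2$D Sobolev embedding with the borderline logarithmic loss), optimize over $r$ by choosing $r^{-2}\sim 2+\|\mathbf{u}\|_{H^1}^2$, and sum/sup over a covering; this is the honest way to get the claimed inequality without an auxiliary higher norm, and it is essentially the content of \cite[Lemma 1]{D1997}.

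I expect the main obstacle to be the boundary, specifically making the extension argument genuinely $H^1$-bounded. For the oddly-extended $u^2$ there is no issue. For $u^1$, the Navier condition $u^1=u^1_{x_2}$ on $\{x_2=0\}$ is not the homogeneous Dirichlet or Neumann condition, so a naive even or odd extension of $u^1$ need not lie in $H^1(\mathbb{R}^2)$ (the trace of $\partial_{x_2}u^1$ would jump). The cleanest resolution, and the one I would write up, is to observe that \eqref{z2.1} does \emph{not} actually need the full condition \eqref{a3}: the only boundary fact used is $\mathbf{u}\cdot\mathbf{n}=0$, i.e. $u^2|_{x_2=0}=0$. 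Under that single constraint, extend $u^2$ oddly and $u^1$ evenly; both extensions are $H^1(\mathbb{R}^2)$ with norm controlled by $C\|\mathbf{u}\|_{H^1(\mathbb{R}^2_+)}$ (the even extension of an $H^1$ function is always $H^1$; the odd extension of an $H^1$ function vanishing on the boundary is $H^1$), $\rho$ extends evenly preserving $0\le\rho\le\hat\rho$, and then the whole-plane inequality proved above yields \eqref{z2.1} with a constant depending only on $\hat\rho$. A secondary, purely technical point is the density: since $\rho$ may vanish, one should carry $\sqrt\rho\mathbf{u}$ rather than $\mathbf{u}$ through the interpolation, using $\|\sqrt\rho\mathbf{u}\|_{L^4}^2\le \|\sqrt\rho\mathbf{u}\|_{L^2}\|\sqrt\rho\mathbf{u}\|_{L^\infty}\le \|\sqrt\rho\mathbf{u}\|_{L^2}\,\hat\rho^{1/2}\|\mathbf{u}\|_{L^\infty}$ and only at the last step replacing $\|\mathbf{u}\|_{L^\infty}$ by the logarithmic bound --- with the $(1+\|\sqrt\rho\mathbf{u}\|_{L^2})$ factor absorbing the additive constant in $\ln(2+\cdot)$ when $\|\mathbf{u}\|_{H^1}$ is small.
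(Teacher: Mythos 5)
There is a genuine gap. Your reduction to $\mathbb{R}^2$ via even/odd reflection is fine (and you are right that only $\mathbf{u}\cdot\mathbf{n}=0$ is needed for that reflection to be $H^1$-bounded; the paper's $e^{2x_2}$-modulated extension preserves the full Navier condition but is not needed for the lemma). The problem is the step after that: you interpolate $\|\mathbf{v}\|_{L^4}^2\le\|\mathbf{v}\|_{L^2}\|\mathbf{v}\|_{L^\infty}$ and then try to control $\|\mathbf{u}\|_{L^\infty(\mathbb{R}^2)}$. This cannot work, because $H^1(\mathbb{R}^2)$ does \emph{not} embed in $L^\infty$, not even locally. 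Your proposed local fix
\begin{equation*}
\|\mathbf{u}\|_{L^\infty(B_r)}^2 \lesssim r^{-2}\|\mathbf{u}\|_{L^2(B_r)}^2 + (1+|\ln r|)\,\|\nabla\mathbf{u}\|_{L^2(B_r)}^2
\end{equation*}
is false: the function $u(x)=\log\log(1/|x-x_0|)$ near $x_0\in B_r$ lies in $H^1(B_r)$ but not $L^\infty(B_r)$. So you never get a finite bound to feed into the interpolation, and the "optimize over $r$" step has nothing to optimize.

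The idea you are missing — which is precisely what Desjardins does and what the paper reproduces — is that one never bounds $\|\mathbf{u}\|_{L^\infty}$. Instead one mollifies, $\mathbf{u}_n=\mathbf{u}*\varphi_n$, and splits
\begin{equation*}
\|\rho|\mathbf{u}|^2\|_{L^2}\le\|\rho\mathbf{u}\cdot(\mathbf{u}-\mathbf{u}_n)\|_{L^2}+\|\rho\mathbf{u}\cdot\mathbf{u}_n\|_{L^2}.
\end{equation*}
The first term is controlled by $\hat\rho^{1/2}\|\sqrt\rho\mathbf{u}\|_{L^4}\|\mathbf{u}-\mathbf{u}_n\|_{L^4}$, and since $\|\mathbf{u}-\mathbf{u}_n\|_{L^2}\le Cn^{-1}\|\nabla\mathbf{u}\|_{L^2}$ one can make this term absorbable into the left side at the cost of $\tfrac{C}{n}\|\mathbf{u}\|_{H^1}^2$; the high-frequency part is never put in $L^\infty$. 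Only the \emph{mollified} piece $\mathbf{u}_n$ is bounded in $L^\infty$, via Moser--Trudinger applied to $\mathbf{u}$ combined with the Young-type inequality $ab\le e^{a^2}-1+Cb\sqrt{\ln(2+b)}$, which gives $\|\mathbf{u}_n\|_{L^\infty}\le C+C\|\mathbf{u}\|_{H^1}\sqrt{\ln(2+n^2\|\mathbf{u}\|_{H^1})}$. Choosing $n\sim(2+\|\mathbf{u}\|_{H^1}^2)^{1/2}$ balances the two contributions and yields \eqref{z2.1}. Without this split, the $L^\infty$ bound you need simply does not hold, and that is where your proposal breaks down.
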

\begin{proof}
Let $\varphi(\mathbf{x})\in L^\infty$ be a nonnegative smooth function with
\begin{equation*}
\int_{\mathbb{R}^2_+}\varphi(\mathbf{x})\mathrm{d}\mathbf{x}=1.
\end{equation*}
For positive integer $n$, define $\varphi_n(\mathbf{x})= n^2\varphi(n\mathbf{x})$ and $\mathbf{u}_n(\mathbf{x}) = \mathbf{u} \ast \varphi_n(\mathbf{x})=\int_{\mathbb{R}^2_+} \mathbf{u}(\mathbf{x}-\mathbf{y})\varphi_n(\mathbf{y})\mathrm{d}\mathbf{y}$. Then it follows from H\"older's inequality and Lemma $\ref{GN}$ that
\begin{align}
\|\sqrt{\rho}\mathbf{u}\|_{L^4}^2&=\|\rho|\mathbf{u}|^2\|_{L^2}\notag\\
&\leq \|\rho\mathbf{u}\cdot(\mathbf{u}-\mathbf{u}_n)\|_{L^2}
+\|\rho\mathbf{u}\cdot\mathbf{u}_n\|_{L^2}\notag\\
&\leq \|\rho\mathbf{u}\|_{L^4}\|\mathbf{u}-\mathbf{u}_n\|_{L^4}+\|\rho\mathbf{u}\|_{L^2}
\|\mathbf{u}_n\|_{L^\infty}\notag\\
&\leq C\hat{\rho}^\frac{1}{2}\|\mathbf{u}-\mathbf{u}_n\|_{L^2}^{\frac{1}{2}}
\|\nabla(\mathbf{u}-\mathbf{u}_n)\|_{L^2}^{\frac{1}{2}}\|\sqrt{\rho}\mathbf{u}\|_{L^4}+
\hat{\rho}^\frac{1}{2}\|\sqrt{\rho}\mathbf{u}\|_{L^2}
\|\mathbf{u}_n\|_{L^\infty}\notag\\
&\leq C(\hat{\rho})\bigg(\frac{\|\nabla\mathbf{u}\|_{L^2}}{n}\bigg)^\frac12\|\nabla\mathbf{u}\|_{L^2}^{\frac12}\|\sqrt{\rho}\mathbf{u}\|_{L^4}+
\hat{\rho}^\frac{1}{2}\|\sqrt{\rho}\mathbf{u}\|_{L^2}
\|\mathbf{u}_n\|_{L^\infty}\notag\\
&\leq \frac{1}{2}\|\sqrt{\rho}\mathbf{u}\|_{L^4}^2+\frac{C(\hat{\rho})}{n}\|\mathbf{u}\|_{H^1}^2+
\hat{\rho}^\frac{1}{2}\|\sqrt{\rho}\mathbf{u}\|_{L^2}
\|\mathbf{u}_n\|_{L^\infty}\notag,
\end{align}
where in the forth inequality we have used
\begin{equation*}
\|\mathbf{u}-\mathbf{u}_{n}\|_{L^{2}}=\|\mathbf{u}-\mathbf{u}*\varphi_{n}\|_{L^{2}}\leq \frac{C}{n}\|\nabla\mathbf{u}\|_{L^{2}},
\end{equation*}
which can be deduced from Lemma 1.50 and Theorem 1.52 in \cite{M97}. Hence,
\begin{equation}\label{z2.2}
\|\sqrt{\rho}\mathbf{u}\|_{L^4}^2\leq \frac{C(\hat{\rho})}{n}\|\mathbf{u}\|_{H^1}^2+
2\hat{\rho}^\frac{1}{2}\|\sqrt{\rho}\mathbf{u}\|_{L^2}
\|\mathbf{u}_n\|_{L^\infty}.
\end{equation}

Now we consider the following Moser--Trudinger inequality in \cite{TR67}
\begin{equation*}
\int_{\mathbb{R}^2}\bigg(\exp\bigg(\frac{c|f(\mathbf{x})|^2}{\|f\|_{H^1
(\mathbb{R}^2)}^2}\bigg)-1\bigg)\mathrm{d}\mathbf{x}\leq C, ~\text{for some constants}~c, C>0.
\end{equation*}
For a function $\mathbf{u}=(u^1,u^2)$ satisfying the boundary condition \eqref{a3}, we extend it to be in $\mathbb{R}^2$ as follows
\begin{align*}
\begin{array}{ll}
\tilde{u}^1(x_1,x_2)=
\begin{cases}
u^1(x_1,x_2),& x_2\geq0,\\
u^1(x_1,-x_2)e^{2x_2},& x_2<0,
\end{cases}\ \ \ \ \
&\tilde{u}^2(x_1,x_2)=
\begin{cases}
u^2(x_1,x_2),& x_2\geq0,\\
-u^2(x_1,-x_2),& x_2<0,
\end{cases}
\end{array}
\end{align*}
where $\tilde{\mathbf{u}}=(\tilde{u}^1,\tilde{u}^2)\in H^{1}(\mathbb{R}^2)$ satisfies $(\partial_2 \tilde{u}^1,\tilde{u}^2)=(u^1,0)$ on $\partial\mathbb{R}^2_+$. Obviously, we have
$$
\|\mathbf{u}\|_{H^{1}}\leq \|\tilde{\mathbf{u}}\|_{H^{1}(\mathbb{R}^2)}\leq C\|\mathbf{u}\|_{H^1},
$$
which implies that
\begin{equation*}
\int_{\mathbb{R}^2_+}\bigg(\exp\bigg(\frac{c|\mathbf{u}(\mathbf{x})|^2}
{\|\mathbf{u}\|_{H^1}^2}\bigg)-1\bigg)\mathrm{d}\mathbf{x}\leq C, ~~\text{for some constants}~c, C>0.
\end{equation*}
Therefore, in view of Young's inequality
\begin{equation*}
ab\leq\exp(a^2)-1+Cb\sqrt{\ln(2+b)}, ~~\text{for some constant}~C>0,
\end{equation*}
one gets that
\begin{align}
|\mathbf{u}_n(\mathbf{x})|&=\bigg|\int_{\mathbb{R}^2_+}\mathbf{u}
(\mathbf{x}-\mathbf{\mathbf{y}})\varphi_n(\mathbf{\mathbf{y}})\mathrm{d}\mathbf{y}
\bigg|\notag\\
&\leq\int_{\mathbb{R}^2_+}\frac{\sqrt{c}|\mathbf{u}(\mathbf{x}-\mathbf{y})|}{\|\mathbf{u}\|_{H^1}}\frac{\|\mathbf{u}\|_{H^1}}{\sqrt{c}}\varphi_n(\mathbf{y})\mathrm{d}\mathbf{y}
\notag\\
&\leq\int_{\mathbb{R}^2_+}\left(\exp\left(\frac{c|\mathbf{u}(\mathbf{x}-\mathbf{y})|^2}{\|\mathbf{u}\|_{H^1}^2}\right)-1\right)\mathrm{d}\mathbf{y}
+C\int_{\mathbb{R}^2_+}\|\mathbf{u}\|_{H^1}\varphi_n(\mathbf{y})\sqrt{\ln(2+\|\mathbf{u}\|_{H^1}\varphi_n(\mathbf{y}))}\mathrm{d}\mathbf{y}\notag\\
&\leq C+C\|\mathbf{u}\|_{H^1}\sqrt{\ln(2+n^2\|\mathbf{u}\|_{H^1})}\int_{\mathbb{R}^2_+}\varphi_n(\mathbf{y})\mathrm{d}\mathbf{y}\notag\\
&=C+C\|\mathbf{u}\|_{H^1}\sqrt{\ln(2+n^2\|\mathbf{u}\|_{H^1})}\notag,
\end{align}
which combined with \eqref{z2.2} yields that
\begin{align*}
\|\sqrt{\rho}\mathbf{u}\|_{L^4}^2&\leq \frac{C}{n}\|\mathbf{u}\|_{H^1}^2+C\|\sqrt{\rho}\mathbf{u}\|_{L^2}
\big(1+\|\mathbf{u}\|_{H^1}
\sqrt{\ln(2+n^2\|\mathbf{u}\|_{H^1})}\big)\notag\\
&\leq C\left(1+\|\sqrt{\rho}\mathbf{u}\|_{L^2}\right)\|\mathbf{u}\|_{H^1}
\left(\frac{\|\mathbf{u}\|_{H^1}}{n}+\sqrt{\ln(2+n^2\|\mathbf{u}\|_{H^1})}\right).
\end{align*}
Thus, we obtain the desired \eqref{z2.1} by choosing the positive integer $n=\left[\left(2+\|\mathbf{u}\|_{H^1}^2\right)^\frac{1}{2}\right]$.
\end{proof}

It should be emphasized that the statement of Lemma \ref{log} has nothing to do with the system \eqref{a1}. It is simply an interpolation between different functional spaces.

Next, the following Hodge-type decomposition is given in \cite{WW92}.
\begin{lemma}\label{Hodge}
Let $1<q<\infty$. For $\mathbf{u}\in W^{1,q}(\mathbb{R}^2_+)$ with $(\mathbf{u}\cdot\mathbf{n})|_{\partial\mathbb{R}^2_+}=0$, there exists a positive constant $C=C(q)$ such that
\begin{equation*}
\|\nabla \mathbf{u}\|_{L^q}\leq
C\big(\|\divv \mathbf{u}\|_{L^q}+\|\curl \mathbf{u}\|_{L^q}\big).
\end{equation*}
\end{lemma}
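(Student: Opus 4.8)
The plan is to reduce the half-plane estimate to the classical whole-plane Hodge estimate by a reflection across the boundary $\{x_2=0\}$, using that $(\mathbf{u}\cdot\mathbf{n})|_{\partial\mathbb{R}^2_+}=0$ is precisely $u^2|_{x_2=0}=0$ (since $\mathbf{n}=-\mathbf{e}_2$ on $\partial\mathbb{R}^2_+$). Concretely, I would extend $\mathbf{u}=(u^1,u^2)$ to a field $\tilde{\mathbf{u}}$ on $\mathbb{R}^2$ by even reflection of the first component and odd reflection of the second,
\[
\tilde u^1(x_1,x_2)=u^1(x_1,|x_2|),\qquad \tilde u^2(x_1,x_2)=\operatorname{sgn}(x_2)\,u^2(x_1,|x_2|),
\]
which is the same kind of extension for $u^2$ already used in the proof of Lemma \ref{log}. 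The first step is to verify that $\tilde{\mathbf{u}}\in W^{1,q}(\mathbb{R}^2)$: even reflection always preserves $W^{1,q}$, and odd reflection does as well here because the trace $u^2|_{x_2=0}=0$ annihilates the singular mass on $\{x_2=0\}$ that an odd reflection of a general $W^{1,q}$ function would produce. A direct computation of the weak derivatives then shows that $\divv\tilde{\mathbf{u}}$ equals the even extension of $\divv\mathbf{u}$ and $\curl\tilde{\mathbf{u}}$ equals the odd extension of $\curl\mathbf{u}$; in particular
\[
\|\divv\tilde{\mathbf{u}}\|_{L^q(\mathbb{R}^2)}^q=2\|\divv\mathbf{u}\|_{L^q}^q,\quad
\|\curl\tilde{\mathbf{u}}\|_{L^q(\mathbb{R}^2)}^q=2\|\curl\mathbf{u}\|_{L^q}^q,\quad
\|\nabla\tilde{\mathbf{u}}\|_{L^q(\mathbb{R}^2)}^q=2\|\nabla\mathbf{u}\|_{L^q}^q .
\]

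The second step is to apply the whole-plane Hodge estimate to $\tilde{\mathbf{u}}$. This rests on the two-dimensional identities $\Delta\tilde u^1=\partial_1\divv\tilde{\mathbf{u}}-\partial_2\curl\tilde{\mathbf{u}}$ and $\Delta\tilde u^2=\partial_2\divv\tilde{\mathbf{u}}+\partial_1\curl\tilde{\mathbf{u}}$, which upon applying $\nabla(-\Delta)^{-1}$ express every component of $\nabla\tilde{\mathbf{u}}$ as a finite sum of second-order Riesz transforms (i.e.\ operators of the form $\partial_j\partial_k(-\Delta)^{-1}$) acting on $\divv\tilde{\mathbf{u}}$ and $\curl\tilde{\mathbf{u}}$. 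Calderón--Zygmund theory gives the $L^q(\mathbb{R}^2)$ boundedness of these operators for $1<q<\infty$, hence
\[
\|\nabla\tilde{\mathbf{u}}\|_{L^q(\mathbb{R}^2)}\le C(q)\bigl(\|\divv\tilde{\mathbf{u}}\|_{L^q(\mathbb{R}^2)}+\|\curl\tilde{\mathbf{u}}\|_{L^q(\mathbb{R}^2)}\bigr).
\]
Combining this with the reflection identities of the first step and cancelling the common factor $2^{1/q}$ yields exactly $\|\nabla\mathbf{u}\|_{L^q}\le C(q)(\|\divv\mathbf{u}\|_{L^q}+\|\curl\mathbf{u}\|_{L^q})$, as claimed.

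The main point requiring care is the representation formula behind the whole-plane estimate: on $\mathbb{R}^2$ the solution of $\Delta w=f$ is determined only up to a harmonic function, so a priori $\tilde u^i$ and $(-\Delta)^{-1}\bigl(\partial_1\divv\tilde{\mathbf{u}}\mp\partial_2\curl\tilde{\mathbf{u}}\bigr)$ differ by a harmonic polynomial. I would dispose of this by density: the inequality involves only the homogeneous seminorm $\|\nabla(\cdot)\|_{L^q}$ and is scale invariant, so it suffices to establish it for $\mathbf{u}$ smooth and suitably decaying on $\mathbb{R}^2_+$ (e.g.\ Schwartz, or compactly supported, satisfying $u^2|_{x_2=0}=0$), for which a Liouville-type argument forces the harmonic ambiguity to vanish, and then pass to the limit, the reflection construction being compatible with such an approximation. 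An alternative, essentially equivalent, route is to quote the half-space Helmholtz/Hodge decomposition of \cite{WW92,AR14} directly and read off the estimate; I would still present the reflection argument as the primary one, since it is self-contained and makes transparent where the boundary condition $(\mathbf{u}\cdot\mathbf{n})|_{\partial\mathbb{R}^2_+}=0$ is used. I expect the verification that the odd extension of $u^2$ produces no distributional mass on $\{x_2=0\}$ — and the bookkeeping of signs in $\divv\tilde{\mathbf{u}}$ and $\curl\tilde{\mathbf{u}}$ — to be the only genuinely delicate part.
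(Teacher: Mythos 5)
The paper does not actually prove this lemma; it quotes it from \cite{WW92,AR14}, which establish div-curl estimates for fairly general domains by potential-theoretic and Helmholtz-decomposition methods. Your reflection argument is therefore a genuinely different, self-contained, and for the flat half-plane considerably more elementary route. What the citation buys is generality (curved or bounded domains, the companion estimate under the tangential boundary condition, higher-order versions); what your argument buys is transparency, in particular making explicit exactly where the hypothesis $(\mathbf{u}\cdot\mathbf{n})|_{\partial\mathbb{R}^2_+}=0$, i.e.\ $u^2|_{x_2=0}=0$, enters: it is precisely what lets the odd extension of $u^2$ land in $W^{1,q}(\mathbb{R}^2)$ without producing a singular layer on $\{x_2=0\}$. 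Your sign bookkeeping is correct under the paper's convention $\curl\mathbf{u}=\partial_1u^2-\partial_2u^1$: for $x_2<0$ the entries $\partial_1\tilde u^1$, $\partial_2\tilde u^2$ are even and $\partial_2\tilde u^1$, $\partial_1\tilde u^2$ are odd, so $\divv\tilde{\mathbf{u}}$ is the even extension of $\divv\mathbf{u}$, $\curl\tilde{\mathbf{u}}$ is the odd extension of $\curl\mathbf{u}$, and $|\nabla\tilde{\mathbf{u}}|$ is even, giving the common factor $2^{1/q}$. One small streamlining: the density argument is most cleanly run entirely on the whole-plane side. First establish $\|\nabla\mathbf{v}\|_{L^q(\mathbb{R}^2)}\le C(q)(\|\divv\mathbf{v}\|_{L^q}+\|\curl\mathbf{v}\|_{L^q})$ for $\mathbf{v}\in\mathcal S(\mathbb{R}^2)^2$ via the second-order Riesz transform representation (the harmonic ambiguity is killed by decay for Schwartz data), then extend to all of $W^{1,q}(\mathbb{R}^2)$ by density, and finally apply it to $\tilde{\mathbf{u}}$; this avoids having to approximate on the half-plane while preserving the boundary condition, since all you need is that $\tilde{\mathbf{u}}\in W^{1,q}(\mathbb{R}^2)$, which the trace-zero hypothesis guarantees. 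With that adjustment the argument is complete and correct.
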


Finally, we recall the following commutator estimates in \cite[Lemma 4.3]{F04}, which play an important role in the mollifier arguments.
\begin{lemma}\label{lcom}
Let $\Omega\subset \mathbb{R}^2$ be a domain.
Let $\rho \in L^{p}(\Omega)$ and ${\bf u} \in W^{1,q}(\Omega)$ be given functions such that $1\le p,q<\infty$ and $\frac{1}{p}+\frac{1}{q}\le 1$. For any $\epsilon>0$, then we have
\begin{equation*}
\|\divv[\rho{\bf u}]_\epsilon-\divv\left([\rho]_\epsilon{\bf u}\right)\|_{L^1(K)}\le C(K)\|\rho\|_{L^p(\Omega)}\|{\bf u}\|_{W^{1,q}(\Omega)},
\end{equation*}
and
\begin{equation*}
\divv[\rho{\bf u}]_\epsilon-\divv\left([\rho]_\epsilon{\bf u}\right)\rightarrow 0\ \text{ in}\ \ L^1(K)\ \ \text{as}\ \epsilon\rightarrow0
\end{equation*}
for any compact set $K\subset \Omega$.
\end{lemma}

\subsection{Uniform estimates for $F$, $\curl\mathbf{u}$, and $\nabla\mathbf{u}$}
For the effective viscous flux $F$, $\mathbf{\curl u}$, and $\mathbf{\nabla u}$, we have the following estimates.
\begin{lemma}\label{E0}
Let $(\rho,\mathbf{u})$ be a smooth solution to the problem \eqref{a1}--\eqref{a4}. Then, for any $2\leq p<\infty$, there exists a generic positive constant $C$ depending only on $p$ and $\mu$ such that
\begin{equation}\label{E1}
\|\nabla F\|_{L^p}+\|\nabla\curl \mathbf{u}\|_{L^p}+\|\nabla^2\mathcal{P}\mathbf{u}\|_{L^p}\leq C\left(\|\rho\dot{\mathbf{u}}\|_{L^p}+\|\nabla \mathbf{u}\|_{L^p}\right),
\end{equation}
\begin{equation}\label{E2}
\|\curl\mathbf{u}\|_{L^p}+\|\nabla\mathcal{P}\mathbf{u}\|_{L^p}\leq C\Big(\|\rho\dot{\mathbf{u}}\|_{L^2}^{1-\frac{2}{p}}\|\nabla \mathbf{u}\|_{L^2}^\frac{2}{p}+\|\nabla \mathbf{u}\|_{L^2}\Big),
\end{equation}
\begin{equation}\label{E3}
\|F\|_{L^p}\leq
C(2\mu+\lambda)^\frac{2}{p}\Big(\|\rho\dot{\mathbf{u}}\|_{L^2}^{1-\frac{2}{p}}
\|\nabla\mathbf{u}\|_{L^2}^\frac{2}{p}+\|\nabla\mathbf{u}\|_{L^2}\Big)
+C\|P-P(\tilde{\rho})\|_{L^2}^\frac{2}{p}
\Big(\|\rho\dot{\mathbf{u}}\|_{L^2}^{1-\frac{2}{p}}
+\|\nabla\mathbf{u}\|_{L^2}^{1-\frac{2}{p}}\Big),
\end{equation}
\begin{equation}\label{E4}
\begin{aligned}[b]
\|\nabla \mathbf{u}\|_{L^p}
&\leq C\|\rho\dot{\mathbf{u}}\|_{L^2}^{1-\frac{2}{p}}
\|\nabla\mathbf{u}\|_{L^2}^\frac{2}{p}+C\|\nabla \mathbf{u}\|_{L^2}
+\frac{C}{2\mu+\lambda}\|P-P(\tilde{\rho})\|_{L^p}
\\&\quad
+\frac{C}{2\mu+\lambda}\|\rho\dot{\mathbf{u}}\|_{L^2}^{1-\frac{2}{p}}
\|P-P(\tilde{\rho})\|_{L^2}^\frac{2}{p}
+\frac{C}{2\mu+\lambda}\|\nabla\mathbf{u}\|_{L^2}^{1-\frac{2}{p}}
\|P-P(\tilde{\rho})\|_{L^2}^\frac{2}{p}.
\end{aligned}
\end{equation}
\end{lemma}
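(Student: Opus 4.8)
The plan is to derive \eqref{E1}--\eqref{E4} from elliptic estimates applied to the momentum equation rewritten in terms of the effective viscous flux $F$ and the vorticity $\omega=\curl\mathbf{u}$. First I would observe that $\eqref{a1}_2$ can be recast as
\[
\rho\dot{\mathbf{u}} = \mu\Delta\mathbf{u} + (\mu+\lambda)\nabla\divv\mathbf{u} - \nabla P = \nabla F + \mu\nabla^{\bot}\omega,
\]
using the two-dimensional identity $\Delta\mathbf{u} = \nabla\divv\mathbf{u} - \nabla^{\bot}\curl\mathbf{u}$ together with the definition $F=(2\mu+\lambda)\divv\mathbf{u}-(P-P(\tilde{\rho}))$. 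Taking the divergence and the curl of this identity yields
\[
\Delta F = \divv(\rho\dot{\mathbf{u}}), \qquad \mu\Delta\omega = \curl(\rho\dot{\mathbf{u}}) = \nabla^{\bot}\cdot(\rho\dot{\mathbf{u}}).
\]
The boundary condition \eqref{a3} (equivalently $\mathbf{u}=-\boldsymbol\omega\mathbf{n}$ with the normalization $\mu K\equiv1$, which on the flat boundary forces $u^2=0$ and a Neumann-type condition on the tangential part) must be translated into compatible boundary conditions for $F$ and $\omega$ so that standard $L^p$ elliptic regularity for the Laplacian on the half-plane applies. This is where one gets $\|\nabla F\|_{L^p}+\|\nabla\omega\|_{L^p}\le C(\|\rho\dot{\mathbf{u}}\|_{L^p}+\|\nabla\mathbf{u}\|_{L^p})$, the $\|\nabla\mathbf{u}\|_{L^p}$ term arising from the lower-order contributions in the boundary conditions; then $\|\nabla^2\mathcal{P}\mathbf{u}\|_{L^p}\le C\|\nabla\omega\|_{L^p}$ follows from Lemma \ref{Hodge} applied to $\mathcal{P}\mathbf{u}$ (which is divergence-free, so $\nabla\mathcal{P}\mathbf{u}$ is controlled by $\curl\mathcal{P}\mathbf{u}=\omega$ alone), giving \eqref{E1}.

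Next, for \eqref{E2} I would interpolate: since $\Delta F=\divv(\rho\dot{\mathbf{u}})$ and $\mu\Delta\omega=\curl(\rho\dot{\mathbf{u}})$ with the appropriate boundary conditions, the estimate $\|\omega\|_{L^p}\le C(\|\rho\dot{\mathbf{u}}\|_{L^2}^{1-2/p}\|\nabla\mathbf{u}\|_{L^2}^{2/p}+\|\nabla\mathbf{u}\|_{L^2})$ comes from Gagliardo--Nirenberg (Lemma \ref{GN}, specifically $\|\omega\|_{L^p}\le C\|\omega\|_{L^2}^{2/p}\|\nabla\omega\|_{L^2}^{1-2/p}$) combined with \eqref{E1} at $p=2$ and $\|\omega\|_{L^2}\le C\|\nabla\mathbf{u}\|_{L^2}$; again $\|\nabla\mathcal{P}\mathbf{u}\|_{L^p}\le C\|\omega\|_{L^p}$. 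For \eqref{E3}, from $F=(2\mu+\lambda)\divv\mathbf{u}-(P-P(\tilde\rho))$ and the relation $\divv\mathbf{u}=\divv\mathcal{Q}\mathbf{u}$, one writes $F=(2\mu+\lambda)\divv\mathbf{u}-(P-P(\tilde\rho))$ and notes $\Delta F=\divv(\rho\dot{\mathbf u})$ so that $F$ is, up to harmonic-extension boundary corrections, $(-\Delta)^{-1}\divv(\rho\dot{\mathbf u})$ up to the pressure; the factor $(2\mu+\lambda)^{2/p}$ is extracted by the scaling in the interpolation $\|F\|_{L^p}\le C\|F\|_{L^2}^{2/p}\|\nabla F\|_{L^2}^{1-2/p}$ after bounding $\|F\|_{L^2}\le C(2\mu+\lambda)\|\nabla\mathbf{u}\|_{L^2}+C\|P-P(\tilde\rho)\|_{L^2}$ and splitting; the two pressure-dependent terms on the right of \eqref{E3} come from the $\|P-P(\tilde\rho)\|_{L^2}^{2/p}$ piece multiplying $\|\nabla F\|_{L^2}^{1-2/p}\le C(\|\rho\dot{\mathbf u}\|_{L^2}^{1-2/p}+\|\nabla\mathbf u\|_{L^2}^{1-2/p})$. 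Finally \eqref{E4} follows by the identity $\divv\mathbf{u}=\frac{F+(P-P(\tilde\rho))}{2\mu+\lambda}$: apply Lemma \ref{Hodge} to get $\|\nabla\mathbf{u}\|_{L^p}\le C(\|\divv\mathbf{u}\|_{L^p}+\|\omega\|_{L^p})$, bound $\|\omega\|_{L^p}$ by \eqref{E2}, and bound $\|\divv\mathbf{u}\|_{L^p}\le\frac{1}{2\mu+\lambda}(\|F\|_{L^p}+\|P-P(\tilde\rho)\|_{L^p})$ using \eqref{E3} for $\|F\|_{L^p}$; collecting terms (and absorbing the harmless factor $(2\mu+\lambda)^{2/p}/(2\mu+\lambda)\le C$) produces exactly the five terms displayed in \eqref{E4}.

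The main obstacle will be the boundary analysis: unlike on $\mathbb{R}^2$ or $\mathbb{T}^2$, on $\mathbb{R}^2_+$ one cannot simply use the Helmholtz projection and Calderón--Zygmund bounds for the full-space Laplacian. One has to carefully read off from the slip condition \eqref{a3} that $F$ satisfies a (perturbed) Neumann condition and $\omega$ a condition of the form $\omega = $ (lower-order boundary term in $\mathbf{u}$) on $x_2=0$ — this is precisely why $\|\nabla\mathbf{u}\|_{L^p}$ appears on the right-hand side of \eqref{E1} and is not absorbable. I expect the proof to invoke an even/odd reflection across $\{x_2=0\}$ (as already used in the proof of Lemma \ref{log}) to reduce the half-plane elliptic estimates to whole-plane Calderón--Zygmund estimates, being careful that the reflected data lie in the right space and that the boundary terms generated by the reflection are controlled by $\|\nabla\mathbf{u}\|_{L^p}$. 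Once the correct boundary conditions for $F$ and $\omega$ are pinned down, the remaining steps are routine interpolation and bookkeeping of the viscosity-coefficient powers.
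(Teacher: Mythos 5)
Your high-level plan (elliptic theory for the vorticity, the algebraic form of the momentum equation, Hodge decomposition, and then $L^2$--$L^p$ interpolation via Lemma \ref{GN}) matches the paper, and your treatment of \eqref{E2}, \eqref{E3}, \eqref{E4} is essentially the paper's proof. But for \eqref{E1} you propose a genuinely more complicated route than the one the paper takes, and the extra complication is not just cosmetic.

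You actually write down the key identity
\[
\rho\dot{\mathbf u} \;=\; \nabla F \;+\; \mu\nabla^{\bot}\omega
\]
at the very start, but then discard it and propose two \emph{separate} elliptic boundary-value problems, $\Delta F = \divv(\rho\dot{\mathbf u})$ with Neumann data and $\mu\Delta\omega=\curl(\rho\dot{\mathbf u})$ with Dirichlet-type data, to be resolved by reflection. The Neumann problem for $F$ is problematic: its Neumann data is $\partial_{\mathbf n} F = (\rho\dot{\mathbf u})\cdot\mathbf n - \mu\nabla^{\bot}\omega\cdot\mathbf n$, and on $\{x_2=0\}$ the term $\nabla^{\bot}\omega\cdot\mathbf n$ is a tangential first derivative of $\omega$ --- precisely a quantity you are simultaneously trying to estimate. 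So your program for $F$ is circular as stated, and even if one untangles it (first solve for $\omega$, then feed $\partial_1\omega$ into the Neumann data for $F$), it is wasted effort: once $\|\nabla\omega\|_{L^p}$ is known, the identity above gives $\nabla F = \rho\dot{\mathbf u} - \mu\nabla^{\bot}\omega$ pointwise, so $\|\nabla F\|_{L^p}\le\|\rho\dot{\mathbf u}\|_{L^p}+\mu\|\nabla\omega\|_{L^p}$ immediately, with no elliptic estimate for $F$ at all. This is what the paper does.

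The second simplification you miss is the precise boundary condition for $\omega$. On $\{x_2=0\}$ the slip condition \eqref{a3} gives $u^2=0$ (hence $\partial_1 u^2=0$ there) and $u^1=u^1_{x_2}$, so $\omega=-\partial_2 u^1+\partial_1 u^2=-u^1_{x_2}=-u^1$, i.e.\ $\omega+u^1=0$ on $\partial\mathbb R^2_+$. The paper therefore considers the single Dirichlet problem
\[
\mu\Delta(\omega+u^1)=\curl(\rho\dot{\mathbf u})+\mu\Delta u^1,\qquad (\omega+u^1)\big|_{x_2=0}=0,
\]
and applies standard $L^p$ Dirichlet theory directly to get $\|\nabla\omega\|_{L^p}\le C(\|\rho\dot{\mathbf u}\|_{L^p}+\|\nabla\mathbf u\|_{L^p})$; the $\|\nabla\mathbf u\|_{L^p}$ term comes from the commutator $\mu\Delta u^1$, not from any reflection bookkeeping. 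You correctly guessed that the boundary condition for $\omega$ is inhomogeneous of the form ``$\omega=$ lower-order term in $\mathbf u$,'' but replacing the ad hoc reflection you sketch by the exact identity $\omega+u^1=0$ and then working with $\omega+u^1$ is both cleaner and is what actually yields the claimed constant depending only on $p$ and $\mu$. So: same ingredients, but you should drop the separate elliptic problem for $F$ in favor of the algebraic relation, and nail down the boundary condition for $\omega$ explicitly rather than relegating it to a generic reflection argument.
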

\begin{proof}
By $\eqref{a1}_2$ and the boundary condition \eqref{a3}, one has that
\begin{align*}
\begin{cases}
\mu\Delta(\omega+u^1)=\curl(\rho{\dot{\mathbf{u}}})+\mu\Delta u^1, &\mathbf{x}\in \mathbb{R}^2_+,\\
\omega+u^1=0, & x_2=0.
\end{cases}
\end{align*}
Then the standard $L^p$-theory gives that
\begin{equation*}
  \|\nabla\omega\|_{L^p}\leq C\left(\|\rho\dot{\mathbf{u}}\|_{L^p}+\|\nabla \mathbf{u}\|_{L^p}\right),
\end{equation*}
which leads to
\begin{equation*}
  \|\nabla F\|_{L^p}\leq \|\rho\dot{\mathbf{u}}\|_{L^p}+\mu\|\nabla^\bot \omega\|_{L^p}
  \leq C\left(\|\rho\dot{\mathbf{u}}\|_{L^p}+\|\nabla \mathbf{u}\|_{L^p}\right),
\end{equation*}
due to $\nabla F=\rho\dot{\mathbf{u}}+\mu\nabla^\bot \omega$.

Next, we derive from Lemma \ref{GN}, \eqref{E1}, and Young's inequality that
\begin{align*}
\|\curl \mathbf{u}&\|_{L^p}\leq C\|\curl \mathbf{u}\|_{L^2}^\frac{2}{p}\|\nabla\curl \mathbf{u}\|_{L^2}^{1-\frac{2}{p}}
\leq C\Big(\|\rho\dot{\mathbf{u}}\|_{L^2}^{1-\frac{2}{p}}\|\nabla \mathbf{u}\|_{L^2}^\frac{2}{p}+\|\nabla \mathbf{u}\|_{L^2}\Big),\\
\|F\|_{L^p}&\leq C\|F\|_{L^2}^\frac{2}{p}\|\nabla F\|_{L^2}^{1-\frac{2}{p}}
\leq C\left((2\mu+\lambda)\|\divv\mathbf{u}\|_{L^2}+\|P-P(\tilde{\rho})\|_{L^2}\right)^\frac{2}{p}
\left(\|\rho\dot{\mathbf{u}}\|_{L^2}+\|\nabla \mathbf{u}\|_{L^2}\right)^{1-\frac{2}{p}}\\
&\leq C(2\mu+\lambda)^\frac{2}{p}\Big(\|\rho\dot{\mathbf{u}}\|_{L^2}^{1-\frac{2}{p}}
\|\nabla\mathbf{u}\|_{L^2}^\frac{2}{p}+\|\nabla\mathbf{u}\|_{L^2}\Big)
+C\|P-P(\tilde{\rho})\|_{L^2}^\frac{2}{p}
\Big(\|\rho\dot{\mathbf{u}}\|_{L^2}^{1-\frac{2}{p}}
+\|\nabla\mathbf{u}\|_{L^2}^{1-\frac{2}{p}}\Big),
\end{align*}
as the desired \eqref{E2} and \eqref{E3}. Combining these with Lemma $\ref{Hodge}$ indicates that
\begin{align*}
\|\nabla\mathbf{u}\|_{L^p}&\leq C(\|\divv\mathbf{u}\|_{L^p}+\|\curl\mathbf{u}\|_{L^p})\notag\\
&\leq \frac{C}{2\mu+\lambda}(\|F\|_{L^p}+\|P-P(\tilde{\rho})\|_{L^p})+C\|\curl\mathbf{u}\|_{L^p}\\
&\leq
\frac{C}{(2\mu+\lambda)^{1-\frac{2}{p}}}\Big(\|\rho\dot{\mathbf{u}}\|_{L^2}^{1-\frac{2}{p}}
\|\nabla\mathbf{u}\|_{L^2}^\frac{2}{p}+\|\nabla\mathbf{u}\|_{L^2}\Big)+\frac{C}{2\mu+\lambda}\Big(\|\rho\dot{\mathbf{u}}\|_{L^2}^{1-\frac{2}{p}}\|P-P(\tilde{\rho})\|_{L^2}^\frac{2}{p}
\notag\\&\quad+\|\nabla\mathbf{u}\|_{L^2}^{1-\frac{2}{p}}\|P-P(\tilde{\rho})\|_{L^2}^\frac{2}{p}
+\|P-P(\tilde{\rho})\|_{L^p}\Big)+C\Big(\|\rho\dot{\mathbf{u}}\|_{L^2}^{1-\frac{2}{p}}\|\nabla \mathbf{u}\|_{L^2}^\frac{2}{p}+\|\nabla \mathbf{u}\|_{L^2}\Big)\\
&\leq C\|\rho\dot{\mathbf{u}}\|_{L^2}^{1-\frac{2}{p}}
\|\nabla\mathbf{u}\|_{L^2}^\frac{2}{p}+C\|\nabla \mathbf{u}\|_{L^2}
+\frac{C}{2\mu+\lambda}\|P-P(\tilde{\rho})\|_{L^p}
\notag\\&\quad
+\frac{C}{2\mu+\lambda}\|\rho\dot{\mathbf{u}}\|_{L^2}^{1-\frac{2}{p}}
\|P-P(\tilde{\rho})\|_{L^2}^\frac{2}{p}
+\frac{C}{2\mu+\lambda}\|\nabla\mathbf{u}\|_{L^2}^{1-\frac{2}{p}}
\|P-P(\tilde{\rho})\|_{L^2}^\frac{2}{p},
\end{align*}
where we have used the fact $2\mu+\lambda\geq\mu>0$.
\end{proof}

\section{\textit{A priori} estimates}\label{sec3}
In this section we will show some necessary {\it a priori} bounds for the strong solutions guaranteed by Lemma $\ref{l2.1}$ to the problem \eqref{a1}--\eqref{a4}. Let us point out that these bounds are independent of the bulk viscosity $\lambda$, the lower bound of $\rho$, the initial regularity, and the time of existence. Furthermore, let $T>0$ be fixed and $(\rho, \mathbf{u})$ be the strong solution to \eqref{a1}--\eqref{a4} in $\mathbb{R}^2_+\times(0, T]$, we can obtain the following key {\it a priori} estimates on $(\rho, \mathbf{u})$.
\begin{proposition}\label{p3.1}
Under the conditions of Theorem $\ref{t1.1}$, if $(\rho, \mathbf{u})$ is a strong solution to the initial-boundary value problem \eqref{a1}--\eqref{a4} satisfying
\begin{align}\label{z3.1}
\sup_{\mathbb{R}^2_+\times[0,T]}\rho\le2\hat{\rho},\ \ \frac{1}{(2\mu+\lambda)^2}\int_{0}^{T}
\|P-P(\tilde{\rho})\|_{L^4}^4\mathrm{d}t\le2,
\end{align}
then one has that
\begin{align}\label{z3.2}
\sup_{\mathbb{R}^2_+\times[0,T]}\rho\le\frac{7}{4}\hat{\rho},\ \ \frac{1}{(2\mu+\lambda)^2}\int_{0}^{T}
\|P-P(\tilde{\rho})\|_{L^4}^4\mathrm{d}t\le1.
\end{align}
\end{proposition}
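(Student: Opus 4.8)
The plan is to run a standard continuity (bootstrap) argument: assume the \textit{a priori hypothesis} \eqref{z3.1} on a time interval $[0,T]$ and close it by proving the strictly better bounds \eqref{z3.2}. The two quantities to be improved are the pointwise density bound and the time-integral of $(2\mu+\lambda)^{-2}\|P-P(\tilde\rho)\|_{L^4}^4$; the whole machinery of Section~\ref{sec3} (Lemmas~\ref{l3.2}--\ref{l3.5}, which I will invoke as already established) is designed precisely to supply these. Concretely, I would first record the basic energy inequality, which under \eqref{z3.1} gives a uniform bound on $\sup_t(\|\sqrt\rho\mathbf{u}\|_{L^2}^2+\|\rho-\tilde\rho\|_{L^2}^2)$ and on $\int_0^T\|\nabla\mathbf{u}\|_{L^2}^2\,\mathrm{d}t$ in terms of $C_0$, together with the key observation (following \cite{P14}) that the Lebesgue measure of the set $\{t:\|\nabla\mathbf{u}(t)\|_{L^2}\ge1\}$ is controlled. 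Next I would feed these into the first-order estimates: the bound on $\sup_t(\mu\|\nabla\mathbf{u}\|_{L^2}^2+(\mu+\lambda)\|\divv\mathbf{u}\|_{L^2}^2)+\int_0^T\|\sqrt\rho\dot{\mathbf{u}}\|_{L^2}^2\,\mathrm{d}t$ (Lemma~\ref{l3.2}), the $L^1_t$-bound on $(2\mu+\lambda)^{-1}\|P-P(\tilde\rho)\|_{L^4}^4$ coming from the damping generated by the effective viscous flux (Lemma~\ref{l3.3}), and the weighted second-order estimate on $\sigma^{1/2}\sqrt\rho\dot{\mathbf{u}}$ and $\sigma^{1/2}\nabla\dot{\mathbf{u}}$ (Lemma~\ref{l3.4}).

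With those in hand, the improvement of the pressure integral is the content of Lemma~\ref{l3.5}: since
\[
\frac{1}{(2\mu+\lambda)^2}\int_0^T\|P-P(\tilde\rho)\|_{L^4}^4\,\mathrm{d}t
\le \frac{1}{2\mu+\lambda}\cdot\frac{1}{2\mu+\lambda}\int_0^T\|P-P(\tilde\rho)\|_{L^4}^4\,\mathrm{d}t,
\]
and the last integral is bounded (via Lemma~\ref{l3.3}, ultimately by a quantity of the form $\exp\{(2+M)^{e^{D(1+C_0)^5}}\}$ up to constants depending only on $\tilde\rho,\hat\rho,a,\gamma,\mu$), the smallness hypothesis \eqref{lam} on $\lambda$ forces the prefactor $(2\mu+\lambda)^{-1}$ to be so small that the product is $\le1$; this is exactly where the hugeness of $\lambda$ is consumed. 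For the density bound I would use the effective-viscous-flux form of the mass equation: writing $(2\mu+\lambda)\divv\mathbf{u}=F+P(\rho)-P(\tilde\rho)$ and composing with the flow map, $\rho$ along a particle path solves an ODE of the form $\tfrac{d}{dt}\Theta(\rho)=-\tfrac{a}{2\mu+\lambda}(\rho^\gamma-\tilde\rho^\gamma)+(\text{term involving }F)$; Zlotnik-type ODE comparison then yields $\sup\rho\le C\hat\rho+C(2\mu+\lambda)^{-1}\int_0^T\|F\|_{L^\infty}\,\mathrm{d}t$, and estimating $\int_0^T\|F\|_{L^\infty}\,\mathrm{d}t$ through Lemma~\ref{E0}, the Gagliardo--Nirenberg inequality, and the weighted bounds on $\sqrt\rho\dot{\mathbf{u}}$ (splitting into $[0,\sigma]$ and $[\sigma,T]$ to handle the $\sigma$-weight) shows this correction is $<\tfrac34\hat\rho$ once $\lambda$ is large, giving $\sup\rho\le\tfrac74\hat\rho$.

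The main obstacle is not the bootstrap skeleton, which is routine, but the hypotheses it rests on — chiefly Lemma~\ref{l3.4}, the weighted $L^\infty_t L^2$ estimate on $\sqrt\rho\dot{\mathbf{u}}$. As flagged in the strategy subsection and in Remark~\ref{r1.4}, differentiating the momentum equation produces the troublesome term $(\mu+\lambda)\sigma\int\dot u^j(\partial_j\divv\mathbf{u}_t+\divv(\mathbf{u}\,\partial_j\divv\mathbf{u}))\,\mathrm{d}\mathbf{x}$, for which one must work with $(\mu+\lambda)\int(\divf\mathbf{u})^2\,\mathrm{d}\mathbf{x}$ rather than $(\mu+\lambda)\int(\divv\dot{\mathbf{u}})^2\,\mathrm{d}\mathbf{x}$, and the Hodge decomposition $\mathbf{u}=\mathcal{P}\mathbf{u}+\mathcal{Q}\mathbf{u}$ generates boundary integrals such as $\int_{\partial\mathbb{R}^2_+}F(\mathcal{P}\mathbf{u})\cdot\nabla(\mathcal{P}\mathbf{u})_t\cdot\mathbf{n}\,\mathrm{ds}$ that cannot be absorbed by the ``bad'' divergence-free part alone and must be converted, using the slip condition \eqref{a3} and the flatness of the boundary (so that $\mathbf{u}\cdot\nabla(\mathcal{P}\mathbf{u})\cdot\mathbf{n}=0$ on $\partial\mathbb{R}^2_+$), into integrals of $\mathcal{Q}\mathbf{u}$ where the bulk viscosity can be extracted. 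Getting all of these terms to close with constants depending only on $C_0,M,\tilde\rho,\hat\rho,a,\gamma,\mu$ — and in particular tracking how the tower $\exp\{(2+M)^{e^{D(1+C_0)^5}}\}$ arises from iterating the logarithmic interpolation inequality of Lemma~\ref{log} with Gr\"onwall — is the technical heart of the argument; once Lemmas~\ref{l3.2}--\ref{l3.5} are granted, Proposition~\ref{p3.1} follows by simply choosing $\lambda$ as in \eqref{lam}.
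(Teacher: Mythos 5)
Your proposal follows the paper's argument exactly: Proposition~\ref{p3.1} is deduced from Lemma~\ref{l3.3} (the $L^1_t$ bound on $(2\mu+\lambda)^{-1}\|P-P(\tilde\rho)\|_{L^4}^4$) and Lemma~\ref{l3.5} (the density bound $\rho\le\tfrac74\hat\rho$), together with the largeness hypothesis~\eqref{lam} with $D\ge 10D_2$, and you correctly trace the supporting chain Lemmas~\ref{l3.1}--\ref{l3.4} along with the key technical devices (working with $\divf\mathbf{u}$ rather than $\divv\dot{\mathbf{u}}$, the slip-boundary identity $\mathbf{u}\cdot\nabla(\mathcal{P}\mathbf{u})\cdot\mathbf{n}=0$, the $\|\nabla\mathbf{u}\|_{L^2}\lessgtr1$ dichotomy and the Desjardins-type inequality). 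One small slip to flag: you write that ``the improvement of the pressure integral is the content of Lemma~\ref{l3.5},'' but that step is a direct consequence of Lemma~\ref{l3.3} (Lemma~\ref{l3.5} supplies only the density bound), and the bound Lemma~\ref{l3.3} produces is $(2+M)^{e^{3D_2(1+C_0)^5}}$ rather than a further exponential---though either is dwarfed by $2\mu+\lambda$ under~\eqref{lam}, so the conclusion is unaffected.
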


Before proving Proposition \ref{p3.1}, we establish some necessary \textit{a priori} estimates, see Lemmas \ref{l3.1}--\ref{l3.4} below. Let us start with the elementary energy estimate of $(\rho, \mathbf{u})$.
\begin{lemma}\label{l3.1}
It holds that
\begin{align}\label{z3.3}
\sup_{0\le t\le T}\int\bigg(\frac{1}{2}\rho |\mathbf{u}|^2+G(\rho)\bigg)\mathrm{d}\mathbf{x}+\int_0^T\left[\mu\|\nabla \mathbf{u}\|_{L^2}^2+(\mu+\lambda)\|\divv \mathbf{u}\|_{L^2}^2\right]\mathrm{d}t
+\mu\int_0^T\int_{\partial\mathbb{R}^2_+}|\mathbf{u}|^2\mathrm{ds}\mathrm{d}t
\leq C_0.
\end{align}
\end{lemma}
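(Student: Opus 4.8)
The plan is to carry out the classical basic energy balance: dot $\eqref{a1}_2$ with $\mathbf{u}$, integrate over $\mathbb{R}^2_+$, use the continuity equation, and keep careful track of the boundary integrals generated by the slip condition \eqref{a3}. First I would multiply the momentum equation by $\mathbf{u}$ and integrate; combining with $\eqref{a1}_1$ and using $(\mathbf{u}\cdot\mathbf{n})|_{\partial\mathbb{R}^2_+}=0$ (which kills the boundary term when one integrates the convective term by parts), the inertial contribution collapses, as usual, to $\frac{d}{dt}\int\frac12\rho|\mathbf{u}|^2\,\mathrm{d}\mathbf{x}$.

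Next I would handle the viscous term by writing $\mu\Delta\mathbf{u}+(\mu+\lambda)\nabla\divv\mathbf{u}=\divv\big(\mu\nabla\mathbf{u}+(\mu+\lambda)(\divv\mathbf{u})\mathbb{I}\big)$ and integrating by parts, producing the interior dissipation $\mu\|\nabla\mathbf{u}\|_{L^2}^2+(\mu+\lambda)\|\divv\mathbf{u}\|_{L^2}^2$ together with a boundary term $-\int_{\partial\mathbb{R}^2_+}\big(\mu(\nabla\mathbf{u})\mathbf{n}+(\mu+\lambda)(\divv\mathbf{u})\mathbf{n}\big)\cdot\mathbf{u}\,\mathrm{d}s$. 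This is the one place where the boundary data is genuinely used: on $\partial\mathbb{R}^2_+$ one has $\mathbf{n}=-\mathbf{e}_2$ and $u^2=0$, so $(\divv\mathbf{u})(\mathbf{n}\cdot\mathbf{u})=0$, while $\mu(\nabla\mathbf{u})\mathbf{n}\cdot\mathbf{u}=-\mu\,\partial_2u^1\,u^1=-\mu|\mathbf{u}|^2$ by \eqref{a3}. Hence the viscous term contributes $\mu\|\nabla\mathbf{u}\|_{L^2}^2+(\mu+\lambda)\|\divv\mathbf{u}\|_{L^2}^2+\mu\int_{\partial\mathbb{R}^2_+}|\mathbf{u}|^2\,\mathrm{d}s$, the boundary piece carrying the favorable sign.

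For the pressure I would write $\int\nabla P\cdot\mathbf{u}=\int\nabla\big(P-P(\tilde\rho)\big)\cdot\mathbf{u}=-\int\big(P-P(\tilde\rho)\big)\divv\mathbf{u}\,\mathrm{d}\mathbf{x}$ (no boundary term, again by $\mathbf{u}\cdot\mathbf{n}=0$). To realize this as a time derivative I would use the renormalized continuity equation with a function $b(\rho)$ obeying $\rho b'(\rho)-b(\rho)=P(\rho)-P(\tilde\rho)$; solving this ODE gives precisely $b=G$ as defined in \eqref{z1.6}, so $\frac{d}{dt}\int G(\rho)\,\mathrm{d}\mathbf{x}=-\int\big(P-P(\tilde\rho)\big)\divv\mathbf{u}\,\mathrm{d}\mathbf{x}$, with no boundary contribution since $b(\rho)\,\mathbf{u}\cdot\mathbf{n}=0$. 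Adding the three pieces yields the differential equality
$$\frac{d}{dt}\int\Big(\tfrac12\rho|\mathbf{u}|^2+G(\rho)\Big)\mathrm{d}\mathbf{x}+\mu\|\nabla\mathbf{u}\|_{L^2}^2+(\mu+\lambda)\|\divv\mathbf{u}\|_{L^2}^2+\mu\int_{\partial\mathbb{R}^2_+}|\mathbf{u}|^2\,\mathrm{d}s=0,$$
and integrating over $[0,t]$ with $t\le T$ and using the definition of $C_0$ gives \eqref{z3.3}, since every term on the left is nonnegative.

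I do not expect a genuine obstacle here — this is the standard energy identity. The only point requiring attention is the bookkeeping of boundary integrals: one must check that the convective and pressure boundary terms vanish because $(\mathbf{u}\cdot\mathbf{n})|_{\partial\mathbb{R}^2_+}=0$, and, more importantly, that the viscous boundary term appears with the dissipative sign, which is exactly what allows $\mu\int_0^T\!\int_{\partial\mathbb{R}^2_+}|\mathbf{u}|^2$ to sit on the left-hand side of \eqref{z3.3}. All manipulations are legitimate for the strong solution provided by Lemma \ref{l2.1}, and the resulting bound depends on neither the lower bound of $\rho$, nor the initial regularity, nor $T$.
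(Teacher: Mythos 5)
Your proposal is correct and follows essentially the same route as the paper: multiply the momentum equation by $\mathbf{u}$, integrate by parts to obtain the dissipation terms together with the boundary integral $\mu\int_{\partial\mathbb{R}^2_+}|\mathbf{u}|^2\,\mathrm{ds}$ (whose favorable sign you verify exactly as needed), and then convert the pressure-work term into $\tfrac{\mathrm{d}}{\mathrm{d}t}\int G(\rho)\,\mathrm{d}\mathbf{x}$ via the renormalized continuity equation, which is precisely the paper's identity \eqref{z3.5}. The only cosmetic difference is that the paper records $\eqref{z3.5}$ first and then adds it to the energy balance, while you derive the same equation from the renormalization ODE $\rho b'(\rho)-b(\rho)=P-P(\tilde\rho)$; the resulting differential identity \eqref{z3.6} and the conclusion are identical.
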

\begin{proof}
Multiplying $\eqref{a1}_2$ by $\mathbf{u}$ and integrating the resultant over $\mathbb{R}^2_+$, we deduce from \eqref{a3} and \eqref{a4} that
\begin{align}\label{z3.4}
\frac{1}{2}\frac{\mathrm{d}}{\mathrm{d}t}\int\rho|\mathbf{u}|^2\mathrm{d}\mathbf{x}
+\int\mathbf{u}\cdot\nabla P\mathrm{d}\mathbf{x}+\int\left[\mu|\nabla\mathbf{u}|^2
+(\mu+\lambda)(\divv\mathbf{u})^2\right]\mathrm{d}\mathbf{x}
+\mu\int_{\partial\mathbb{R}^2_+}|\mathbf{u}|^2\mathrm{ds}=0.
\end{align}
By $\eqref{a1}_1$ and the definition of $G(\rho)$ in \eqref{z1.6}, one has that
\begin{equation}\label{z3.5}
(G(\rho))_t+\divv (G(\rho)\mathbf{u})+(P-P(\tilde{\rho}))\divv \mathbf{u}=0.
\end{equation}
Integrating \eqref{z3.5} over $\mathbb{R}^2_+$ and then adding \eqref{z3.4}, we get that
\begin{align}\label{z3.6}
\frac{\mathrm{d}}{\mathrm{d}t}\int\bigg(\frac{1}{2}\rho |\mathbf{u}|^2+G(\rho)\bigg)\mathrm{d}\mathbf{x}+\int\left[\mu|\nabla\mathbf{u}|^2
+(\mu+\lambda)(\divv\mathbf{u})^2\right]\mathrm{d}\mathbf{x}
+\mu\int_{\partial\mathbb{R}^2_+}|\mathbf{u}|^2\mathrm{ds}=0,
\end{align}
which yields \eqref{z3.3} after integrating \eqref{z3.6} with respect to $t$ over $(0,T)$.
\end{proof}

The following result concerns the time-independent estimate on $\nabla \mathbf{u}$ in $L^\infty(0, T;L^2)$.
\begin{lemma}\label{l3.2}
Let \eqref{z3.1} be satisfied, then there exists a positive constant $D_2$ depending only on $\tilde{\rho}$, $\hat{\rho}$, $a$, $\gamma$, and $\mu$ such that
\begin{align}\label{z3.7}
\sup_{0\le t\le T}\left[\mu\|\nabla \mathbf{u}\|_{L^2}^2+(\mu+\lambda)\|\divv \mathbf{u}\|_{L^2}^2\right]+\int_0^T\|\sqrt{\rho} \dot{\mathbf{u}}\|_{L^2}^2\mathrm{d}t\le (2+M)^{e^{2D_2(1+C_0)^5}}
\end{align}
provided that $\lambda$ satisfies \eqref{lam} with $D\geq D_2$.
\end{lemma}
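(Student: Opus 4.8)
The plan is to estimate $\frac{\mathrm{d}}{\mathrm{d}t}\big(\mu\|\nabla\mathbf{u}\|_{L^2}^2+(\mu+\lambda)\|\divv\mathbf{u}\|_{L^2}^2\big)$ by testing $\eqref{a1}_2$ with $\dot{\mathbf{u}}$. Multiplying the momentum equation by $\dot{\mathbf{u}}=\mathbf{u}_t+\mathbf{u}\cdot\nabla\mathbf{u}$ and integrating over $\mathbb{R}^2_+$, one produces on the left the dissipation $\int\rho|\dot{\mathbf{u}}|^2\,\mathrm{d}\mathbf{x}$, and integrating the viscous and pressure terms by parts (using the boundary condition \eqref{a3} and the flatness of $\partial\mathbb{R}^2_+$ to handle the boundary integrals) yields the time-derivative of $\mu\|\nabla\mathbf{u}\|_{L^2}^2+(\mu+\lambda)\|\divv\mathbf{u}\|_{L^2}^2$ plus a boundary term $\frac{\mathrm{d}}{\mathrm{d}t}\int_{\partial\mathbb{R}^2_+}\mu|\mathbf{u}|^2\,\mathrm{ds}$, plus error terms of the form $\int\rho\dot{\mathbf{u}}\cdot(\mathbf{u}\cdot\nabla)\mathbf{u}\,\mathrm{d}\mathbf{x}$, $\int P\,\divv(\mathbf{u}\cdot\nabla\mathbf{u})\,\mathrm{d}\mathbf{x}$, and commutator-type terms coming from moving $\mathbf{u}\cdot\nabla$ past the spatial derivatives. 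This is the identity the excerpt refers to as \eqref{z3.9}.

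Next I would estimate the right-hand side terms. The pressure term $\int P\,\divv(\mathbf{u}\cdot\nabla\mathbf{u})\,\mathrm{d}\mathbf{x}$ is rewritten, via the effective viscous flux $F$ and the relation $(2\mu+\lambda)\divv\mathbf{u}=F+(P-P(\tilde\rho))$, in a form that exposes a factor $\frac{1}{2\mu+\lambda}$; using \eqref{E3}, \eqref{E4} from Lemma \ref{E0} together with Lemma \ref{l3.1} and the a priori bound $\rho\le2\hat\rho$, these contributions are controlled by $C\|\sqrt\rho\dot{\mathbf{u}}\|_{L^2}\|\nabla\mathbf{u}\|_{L^2}^{?}+\cdots$, absorbing the $\|\sqrt\rho\dot{\mathbf{u}}\|_{L^2}^2$ part into the left side. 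The convective term $\int\rho\dot{\mathbf{u}}\cdot(\mathbf{u}\cdot\nabla)\mathbf{u}\,\mathrm{d}\mathbf{x}$ is the delicate one: following the strategy described in the introduction, I would split according to whether $\|\nabla\mathbf{u}\|_{L^2}\le1$ or $\|\nabla\mathbf{u}\|_{L^2}\ge1$. In the first case one bounds $\|\sqrt\rho\mathbf{u}\cdot\nabla\mathbf{u}\|_{L^2}$ crudely via Gagliardo--Nirenberg and absorbs; in the second case one invokes the Desjardins-type logarithmic inequality (Lemma \ref{log}) to write $\|\sqrt\rho\mathbf{u}\|_{L^4}^2\le C(1+\|\sqrt\rho\mathbf{u}\|_{L^2})\|\mathbf{u}\|_{H^1}\sqrt{\ln(2+\|\mathbf{u}\|_{H^1}^2)}$, which, combined with $\|\nabla\mathbf{u}\|_{L^4}$ estimates from \eqref{E4}, produces a bound whose ``bad'' factor is $\frac{1}{(2\mu+\lambda)^2}\|P-P(\tilde\rho)\|_{L^4}^4$ times the energy; the time integral of that factor is $\le2$ by the a priori hypothesis \eqref{z3.1}.

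The resulting differential inequality has the schematic shape
\begin{equation*}
\frac{\mathrm{d}}{\mathrm{d}t}\,\mathcal{B}(t)+c\int\rho|\dot{\mathbf{u}}|^2\,\mathrm{d}\mathbf{x}\ \le\ \big(A(t)+B(t)\big)\,\mathcal{B}(t)\ln\!\big(2+\mathcal{B}(t)\big)+A(t)+B(t),
\end{equation*}
where $\mathcal{B}(t)\triangleq\mu\|\nabla\mathbf{u}\|_{L^2}^2+(\mu+\lambda)\|\divv\mathbf{u}\|_{L^2}^2+\int_{\partial\mathbb{R}^2_+}\mu|\mathbf{u}|^2\mathrm{ds}$ (including the boundary term so that the left side is a clean derivative), $A(t)$ is $L^1(0,T)$ with norm controlled by $C_0$ via Lemma \ref{l3.1}, and $B(t)\triangleq\frac{1}{(2\mu+\lambda)^2}\|P-P(\tilde\rho)\|_{L^4}^4$ has $\int_0^T B\,\mathrm{d}t\le2$ by \eqref{z3.1}. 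A Gronwall argument of log-type (integrating $\frac{\mathrm{d}}{\mathrm{d}t}\ln\ln(2+\mathcal{B})$) then gives $\sup_{[0,T]}\mathcal{B}(t)\le(2+M)^{\exp(C(1+C_0)^\alpha)}$ for a suitable power $\alpha$; since $\mathcal{B}(0)$ is bounded by $2+M$ via \eqref{z1.9}, and the power $5$ in the statement leaves room, one obtains \eqref{z3.7} with $D_2$ depending only on $\tilde\rho,\hat\rho,a,\gamma,\mu$. The main obstacle is the convective term in the regime $\|\nabla\mathbf{u}\|_{L^2}\ge1$: one must arrange the interpolation so that the only uncontrolled-in-$\lambda$ factor is exactly $B(t)$, which forces careful bookkeeping of every power of $2\mu+\lambda$ via Lemma \ref{E0} and precludes using the naive bound $\|\nabla\mathbf{u}\|_{L^4}\le C\|\nabla\mathbf{u}\|_{L^2}^{1/2}\|\nabla^2\mathbf{u}\|_{L^2}^{1/2}$; a secondary nuisance is the collection of boundary integrals generated by the commutators, which must be shown to either vanish (by \eqref{a3} and flatness) or be absorbable.
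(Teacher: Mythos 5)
Your proposal is correct and follows essentially the same route as the paper: test the momentum equation with $\mathbf{u}_t$ (equivalent, after moving the convective piece, to your choice of $\dot{\mathbf{u}}$) to obtain \eqref{z3.9}; rewrite the pressure term using the effective viscous flux with a total time derivative $\frac{\mathrm{d}}{\mathrm{d}t}\int(P-P(\tilde{\rho}))\divv\mathbf{u}\,\mathrm{d}\mathbf{x}$ extracted; split the convective term according to $\|\nabla\mathbf{u}\|_{L^2}\lessgtr 1$, invoking Lemma~\ref{log} in the large regime; and close with a logarithmic Gronwall argument exploiting that $\int_0^T\|\nabla\mathbf{u}\|_{L^2}^2\,\mathrm{d}t\le C_0/\mu$ (Lemma~\ref{l3.1}) and $\int_0^T(2\mu+\lambda)^{-2}\|P-P(\tilde{\rho})\|_{L^4}^4\,\mathrm{d}t\le 2$ (a priori hypothesis \eqref{z3.1}). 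The one detail your schematic glosses over is that the Gronwall functional must absorb the cross term $-\int(P-P(\tilde{\rho}))\divv\mathbf{u}\,\mathrm{d}\mathbf{x}$ produced by the pressure integration-by-parts in time (as the paper does in defining $B_1$ in \eqref{z3.20}, also adding in the basic energy), with the equivalence \eqref{z3.21} checked for $\lambda$ large so the functional stays comparable to $\|\nabla\mathbf{u}\|_{L^2}^2$; this is bookkeeping rather than a conceptual gap.
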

\begin{proof}
According to $\eqref{a1}_1$, one can rewrite $\eqref{a1}_2$ as
\begin{equation}\label{z3.8}
\rho\mathbf{u}_t+\rho\mathbf{u}\cdot\nabla\mathbf{u}-\mu\Delta\mathbf{u}-(\mu+\lambda)\nabla\divv \mathbf{u}+\nabla P=0.
\end{equation}
Multiplying \eqref{z3.8} by $\mathbf{u}_t$ and integrating (by parts) the resultant over $\mathbb{R}^2_+$, we obtain that
\begin{align}\label{z3.9}
&\frac{1}{2}\frac{\mathrm{d}}{\mathrm{d}t}\bigg(\int\left[\mu|\nabla \mathbf{u}|^2+(\mu+\lambda)(\divv \mathbf{u})^2\right]\mathrm{d}\mathbf{x}
+\mu\int_{\partial\mathbb{R}^2_+}|\mathbf{u}|^2\mathrm{ds}\bigg)+\int\rho |\dot{\mathbf{u}}|^2\mathrm{d}\mathbf{x}\notag\\
&=-\int\mathbf{u}_t\cdot\nabla P\mathrm{d}\mathbf{x}+\int\rho \dot{\mathbf{u}}\cdot(\mathbf{u}\cdot\nabla)\mathbf{u}\mathrm{d}\mathbf{x}.
\end{align}

For the first term on the right-hand side of \eqref{z3.9}, it follows from \eqref{a3}--\eqref{z1.5} that
\begin{align}\label{z3.10}
&-\int\mathbf{u}_t\cdot\nabla P\mathrm{d}\mathbf{x}=\int (P-P(\tilde{\rho}))\divv\mathbf{u}_t\mathrm{d}\mathbf{x}\notag\\
&=\frac{\mathrm{d}}{\mathrm{d}t}\int(P-P(\tilde{\rho}))\divv\mathbf{u} \mathrm{d}\mathbf{x}-\int\divv\mathbf{u}P'(\rho)\rho_t\mathrm{d}\mathbf{x}\notag\\
&=\frac{\mathrm{d}}{\mathrm{d}t}\int(P-P(\tilde{\rho}))\divv\mathbf{u} \mathrm{d}\mathbf{x}+\int\divv\mathbf{u}P'(\rho)\rho\divv\mathbf{u}
\mathrm{d}\mathbf{x}+\int\divv\mathbf{u}P'(\rho)\mathbf{u}\cdot\nabla\rho\mathrm{d}\mathbf{x}\notag\\
&=\frac{\mathrm{d}}{\mathrm{d}t}\int(P-P(\tilde{\rho}))\divv\mathbf{u}\mathrm{d}\mathbf{x}+
\int(\divv\mathbf{u})^2P'(\rho)\rho\mathrm{d}\mathbf{x}+
\int\mathbf{u}\cdot\nabla(P-P(\tilde{\rho}))\divv\mathbf{u}\mathrm{d}\mathbf{x}\notag\\
&=\frac{\mathrm{d}}{\mathrm{d}t}\int(P-P(\tilde{\rho}))\divv\mathbf{u}\mathrm{d}\mathbf{x}+
\int(\divv\mathbf{u})^2(P'(\rho)\rho-P+P(\tilde{\rho}))\mathrm{d}\mathbf{x}-
\int(P-P(\tilde{\rho}))\mathbf{u}\cdot\nabla\divv\mathbf{u}\mathrm{d}\mathbf{x}\notag\\
&=\frac{\mathrm{d}}{\mathrm{d}t}\int(P-P(\tilde{\rho}))\divv\mathbf{u}\mathrm{d}\mathbf{x}+
\int(\divv\mathbf{u})^2(P'(\rho)\rho-P+P(\tilde{\rho}))\mathrm{d}\mathbf{x}\notag\\
&\quad-\frac{1}{2\mu+\lambda}
\int (P-P(\tilde{\rho}))\mathbf{u}\cdot\nabla(F+P-P(\tilde{\rho}))\mathrm{d}\mathbf{x}\notag\\
&=\frac{\mathrm{d}}{\mathrm{d}t}\int(P-P(\tilde{\rho}))\divv\mathbf{u}\mathrm{d}\mathbf{x}+
\int(\divv\mathbf{u})^2(P'(\rho)\rho-P+P(\tilde{\rho}))\mathrm{d}\mathbf{x}-\frac{1}{2\mu+\lambda}
\int (P-P(\tilde{\rho}))\mathbf{u}\cdot\nabla F\mathrm{d}\mathbf{x}\notag\\
&\quad+\frac{1}{4\mu+2\lambda}
\int(P-P(\tilde{\rho}))^2\divv\mathbf{u}\mathrm{d}\mathbf{x}\notag\\
&\triangleq\frac{\mathrm{d}}{\mathrm{d}t}\int(P-P(\tilde{\rho}))\divv\mathbf{u} \mathrm{d}\mathbf{x}+\sum_{i=1}^{3}I_i.
\end{align}
By \eqref{z3.1}, we deduce that
\begin{gather}\label{z3.11}
I_1\leq C(\tilde{\rho},\hat{\rho})\|\divv \mathbf{u}\|_{L^2}^2,\\
I_3\leq\frac{C}{(2\mu+\lambda)^2}\|P-P(\tilde{\rho})\|_{L^4}^4+C\|\divv\mathbf{u}\|_{L^{2}}^{2}.\label{z3.12}
\end{gather}
For the treatment of $I_2$, taking advantage of \eqref{z1.10}, \eqref{z3.3}, and Lemma $\ref{GN}$, we find that
\begin{align}\label{z3.13}
\tilde{\rho}\int|\mathbf{u}|^2\mathrm{d}\mathbf{x}
&=\int\rho|\mathbf{u}|^2\mathrm{d}\mathbf{x}
+\int(\tilde{\rho}-\rho)|\mathbf{u}|^2\mathrm{d}\mathbf{x}\notag\\
&\leq\|\sqrt{\rho}\mathbf{u}\|_{L^2}^2
+\|\rho-\tilde{\rho}\|_{L^2}\|\mathbf{u}\|_{L^4}^2\notag\\
&\leq\|\sqrt{\rho}\mathbf{u}\|_{L^2}^2
+C\|\rho-\tilde{\rho}\|_{L^2}\|\mathbf{u}\|_{L^2}\|\nabla\mathbf{u}\|_{L^2}\notag
\\&\leq2C_0
+CC_0^\frac{1}{2}\|\mathbf{u}\|_{L^2}\|\nabla\mathbf{u}\|_{L^2},
\end{align}
which combined with the fact $\tilde{\rho}>0$ and Cauchy--Schwarz inequality ensures that
\begin{align*}
\|\mathbf{u}\|_{L^2}^2 \leq C(\tilde{\rho},\hat{\rho})C_0
\big(1+\|\nabla\mathbf{u}\|_{L^2}^2\big),
\end{align*}
and furthermore,
\begin{align}\label{z3.14}
\|\mathbf{u}\|_{H^1}\leq C(\tilde{\rho},\hat{\rho})(1+C_0)^\frac12
\big(1+\|\nabla\mathbf{u}\|_{L^2}\big).
\end{align}
As a result, we can infer from Lemmas $\ref{GN}$, $\ref{E0}$, and H\"older's inequality that
\begin{align}\label{z3.15}
I_{2}&\leq\frac{1}{2\mu+\lambda}\|P-P(\tilde{\rho})\|_{L^4}\|\mathbf{u}\|_{L^4}\|\nabla F\|_{L^2}\notag\\
&\leq\frac{C}{2\mu+\lambda}\|P-P(\tilde{\rho})\|_{L^4}\|\mathbf{u}\|_{L^2}^{\frac{1}{2}}
\|\nabla\mathbf{u}\|_{L^2}^{\frac{1}{2}}
\big(\|\sqrt{\rho}\dot{\mathbf{u}}\|_{L^2}+\|\nabla\mathbf{u}\|_{L^2}\big)\notag\\
&\leq\frac{C(1+C_0)^\frac14}{2\mu+\lambda}\|P-P(\tilde{\rho})\|_{L^4}
\big(1+\|\nabla\mathbf{u}\|_{L^2}\big)^\frac12
\|\nabla\mathbf{u}\|_{L^2}^{\frac{1}{2}}
\big(\|\sqrt{\rho}\dot{\mathbf{u}}\|_{L^2}+\|\nabla\mathbf{u}\|_{L^2}\big)\notag\\
&\leq\frac{1}{4}\|\sqrt{\rho}\dot{\mathbf{u}}\|_{L^2}^2
+C(1+C_0)\|\nabla\mathbf{u}\|_{L^{2}}^{2}\big(1+\|\nabla\mathbf{u}\|_{L^{2}}^{2}\big)
+\frac{C}{(2\mu+\lambda)^4}\|P-P(\tilde{\rho})\|_{L^4}^4.
\end{align}
Therefore, substituting \eqref{z3.11}, \eqref{z3.12}, and \eqref{z3.15} into \eqref{z3.10} yields that
\begin{align}\label{z3.16}
-\int\mathbf{u}_t\cdot\nabla P\mathrm{d}\mathbf{x}&\leq\frac{\mathrm{d}}{\mathrm{d}t}\int (P-P(\tilde{\rho}))\divv\mathbf{u} \mathrm{d}\mathbf{x}+
\frac{C}{(2\mu+\lambda)^2}\|P-P(\tilde{\rho})\|_{L^4}^4
\notag \\ & \quad+\frac{1}{4}\|\sqrt{\rho}\dot{\mathbf{u}}\|_{L^2}^2+
C(1+C_0)\|\nabla\mathbf{u}\|_{L^{2}}^{2}\big(1+\|\nabla\mathbf{u}\|_{L^{2}}^{2}\big).
\end{align}

Next, we deal with the second term on the right-hand side of \eqref{z3.9} in two cases.

\textbf{Case 1:} If $\|\nabla\mathbf{u}\|_{L^{2}}\leq1$, \eqref{z3.14} simplifies to
\begin{equation*}
\|\mathbf{u}\|_{H^1}\leq C(\tilde{\rho},\hat{\rho})(1+C_0)^\frac12,
\end{equation*}
then it follows from H\"older's inequality, Cauchy--Schwarz inequality, Lemma $\ref{GN}$, and \eqref{E4} that
\begin{align}\label{z3.17}
&\int\rho \dot{\mathbf{u}}\cdot(\mathbf{u}\cdot\nabla)\mathbf{u}\mathrm{d}\mathbf{x}\notag\\
&\leq C\|\sqrt{\rho}\dot{\mathbf{u}}\|_{L^2}\|\mathbf{u}\|_{L^{4}}\|\nabla\mathbf{u}\|_{L^{4}}\notag\\
&\leq C\|\sqrt{\rho}\dot{\mathbf{u}}\|_{L^2}\|\mathbf{u}\|_{L^{2}}^{\frac{1}{2}}
\|\nabla\mathbf{u}\|_{L^{2}}^{\frac{1}{2}}\Big(\|\sqrt{\rho}\dot{\mathbf{u}}\|_{L^2}^{\frac{1}{2}}
\|\nabla\mathbf{u}\|_{L^2}^\frac{1}{2}+\|\nabla\mathbf{u}\|_{L^2}\notag\\
&\quad+\frac{1}{2\mu+\lambda}\|\sqrt{\rho}\dot{\mathbf{u}}\|_{L^2}^{\frac{1}{2}}
\|P-P(\tilde{\rho})\|_{L^2}^\frac{1}{2}
+\frac{1}{2\mu+\lambda}\|\nabla\mathbf{u}\|_{L^2}^{\frac{1}{2}}
\|P-P(\tilde{\rho})\|_{L^2}^\frac{1}{2}
+\frac{1}{2\mu+\lambda}
\|P-P(\tilde{\rho})\|_{L^4}\Big)\notag\\
&\leq \frac{1}{4}\|\sqrt{\rho}\dot{\mathbf{u}}\|_{L^2}^2+C(1+C_0)\|\nabla \mathbf{u}\|_{L^2}^2+\frac{C}{(2\mu+\lambda)^4}\|P-P(\tilde{\rho})\|_{L^4}^4,
\end{align}
where in the last inequality we have used
\begin{equation*}
\|P-P(\tilde{\rho})\|_{L^2}\leq C(\tilde{\rho},\hat{\rho},a,\gamma)\|\rho-\tilde{\rho}\|_{L^2}\leq CC_0^{\frac12},
\end{equation*}
due to \eqref{z1.10} and \eqref{z3.3}.

\textbf{Case 2:} If $\|\nabla\mathbf{u}\|_{L^{2}}\ge1$, noting that
\begin{equation*}
\ln(2+bc)\le b\ln(2+c),~\mathrm{for}~b,c\ge1,
\end{equation*}
then one can deduce from Lemmas $\ref{log}$, $\ref{E0}$, $\ref{l3.1}$, and \eqref{z3.14} that
\begin{align}\label{z3.18}
&\int\rho \dot{\mathbf{u}}\cdot(\mathbf{u}\cdot\nabla)\mathbf{u}\mathrm{d}\mathbf{x}\notag\\
&\leq C\|\sqrt{\rho}\dot{\mathbf{u}}\|_{L^2}\|\sqrt{\rho}\mathbf{u}\|_{L^{4}}\|\nabla\mathbf{u}\|_{L^{4}}\notag\\
&\leq C\|\sqrt{\rho}\dot{\mathbf{u}}\|_{L^2}(1+\|\sqrt\rho\mathbf{u}\|_{L^2})^{\frac{1}{2}}
\|\mathbf{u}\|_{H^1}^{\frac{1}{2}}\ln^{\frac{1}{4}}\left(2+\|\mathbf{u}\|_{H^1}^2\right)
\bigg(\|\sqrt{\rho}\dot{\mathbf{u}}\|_{L^2}^{\frac{1}{2}}
\|\nabla\mathbf{u}\|_{L^2}^\frac{1}{2}+\|\nabla\mathbf{u}\|_{L^2}\notag\\
&\quad+\frac{1}{2\mu+\lambda}\|\sqrt{\rho}\dot{\mathbf{u}}\|_{L^2}^{\frac{1}{2}}
\|P-P(\tilde{\rho})\|_{L^2}^\frac{1}{2}
+\frac{1}{2\mu+\lambda}\|\nabla\mathbf{u}\|_{L^2}^{\frac{1}{2}}
\|P-P(\tilde{\rho})\|_{L^2}^\frac{1}{2}
+\frac{1}{2\mu+\lambda}
\|P-P(\tilde{\rho})\|_{L^4}\bigg)\notag\\
&\leq C\|\sqrt{\rho}\dot{\mathbf{u}}\|_{L^2}\Big(1+C_0^{\frac{1}{2}}\Big)^{\frac{1}{2}}(1+C_0)^{\frac14}
(1+\|\nabla\mathbf{u}\|_{L^{2}})^{\frac{1}{2}}(1+C_0)^{\frac14}\ln^{\frac{1}{4}}\left(2+\|\nabla\mathbf{u}\|_{L^2}^2\right)
\bigg(\|\sqrt{\rho}\dot{\mathbf{u}}\|_{L^2}^{\frac{1}{2}}
\|\nabla\mathbf{u}\|_{L^2}^\frac{1}{2}\notag\\
&\quad+\|\nabla\mathbf{u}\|_{L^2}
+\frac{C_0^{\frac14}}{2\mu+\lambda}\|\sqrt{\rho}\dot{\mathbf{u}}\|_{L^2}^{\frac{1}{2}}
+\frac{C_0^{\frac14}}{2\mu+\lambda}\|\nabla\mathbf{u}\|_{L^2}^{\frac{1}{2}}
+\frac{1}{2\mu+\lambda}\|P-P(\tilde{\rho})\|_{L^4}\bigg)\notag\\
&\leq \frac{1}{4}\|\sqrt{\rho}\dot{\mathbf{u}}\|_{L^2}^2+C(1+C_0)^4
\big(1+\|\nabla\mathbf{u}\|_{L^{2}}^2\big)
\ln\left(2+\|\nabla\mathbf{u}\|_{L^2}^2\right)\|\nabla\mathbf{u}\|_{L^2}^2
+\frac{C}{(2\mu+\lambda)^4}\|P-P(\tilde{\rho})\|_{L^4}^4.
\end{align}

Substituting \eqref{z3.16}--\eqref{z3.18} into \eqref{z3.9} leads to
\begin{align}\label{z3.19}
&\frac{1}{2}\frac{\mathrm{d}}{\mathrm{d}t}\bigg(\int\left[\mu|\nabla \mathbf{u}|^2+(\mu+\lambda)(\divv \mathbf{u})^2-2(P-P(\tilde{\rho}))\divv\mathbf{u}\right]
\mathrm{d}\mathbf{x}+\mu\int_{\partial\mathbb{R}^2_+}|\mathbf{u}|^2\mathrm{ds}\bigg)
+\frac{1}{2}\int\rho |\dot{\mathbf{u}}|^2\mathrm{d}\mathbf{x}\notag\\
&\leq C(1+C_0)^4\big(1+\|\nabla\mathbf{u}\|_{L^{2}}^2\big)
\ln\big(2+\|\nabla\mathbf{u}\|_{L^2}^2\big)\|\nabla\mathbf{u}\|_{L^2}^2
+\frac{C}{(2\mu+\lambda)^2}\|P-P(\tilde{\rho})\|_{L^4}^4.
\end{align}
Now we define an auxiliary functional $B_1(t)$ as
\begin{align}\label{z3.20}
B_1(t)=&\int\bigg(\frac{1}{2}\rho |\mathbf{u}|^2+G(\rho)\bigg)\mathrm{d}\mathbf{x}
+\frac{\mu}{2}\int_{\partial\mathbb{R}^2_+}|\mathbf{u}|^2\mathrm{ds}\notag\\
&+\frac{1}{2}\int\left[\mu|\nabla \mathbf{u}|^2+(\mu+\lambda)(\divv \mathbf{u})^2-2(P-P(\tilde{\rho}))\divv\mathbf{u}\right]\mathrm{d}\mathbf{x}.
\end{align}
This combined with \eqref{z1.10} and \eqref{z3.1} indicates that there exists $D_1=D_1(\tilde{\rho},\hat{\rho})>0$ such that
\begin{align}\label{z3.21}
B_1(t)\thicksim\int\bigg(\frac{1}{2}\rho |\mathbf{u}|^2+G(\rho)\bigg)\mathrm{d}\mathbf{x}+\frac{1}{2}\int\left[\mu|\nabla \mathbf{u}|^2+(\mu+\lambda)(\divv \mathbf{u})^2\right]\mathrm{d}\mathbf{x}
+\frac{\mu}{2}\int_{\partial\mathbb{R}^2_+}|\mathbf{u}|^2\mathrm{ds}
\end{align}
provided $\lambda\ge D_1$.

Setting
\begin{align}\label{z3.22}
f_1(t)\triangleq2+B_1(t),~~g_1(t)\triangleq(1+C_0)^4\|\nabla\mathbf{u}\|_{L^2}^2+\frac{1}{(2\mu+\lambda)^2}\|P-P(\tilde{\rho})\|_{L^4}^4.
\end{align}
Adding \eqref{z3.6} into \eqref{z3.19}, we thus deduce from \eqref{z3.21} that
\begin{equation*}
    f'_{1}(t)\leq Cg_{1}(t)f_{1}(t)\ln f_{1}(t),
\end{equation*}
which yields that
\begin{equation}\label{z3.23}
    \big(\ln f_1(t)\big)'\leq Cg_1(t)\ln f_1(t).
\end{equation}
Hence, using Gronwall's inequality, there is a positive constant $D_2=D_2(\tilde{\rho},\hat{\rho},a,\gamma,\mu)\geq D_1$ such that
\begin{equation}\label{z3.24}
    \sup_{0\leq t\leq T}\int\left[\mu|\nabla \mathbf{u}|^2+(\mu+\lambda)(\divv \mathbf{u})^2\right]\mathrm{d}\mathbf{x}\leq(2+M)^{\mathrm{e}^{D_2(1+C_0)^5}}
\end{equation}
provided $\lambda\geq D_2$.
Integrating \eqref{z3.19} with respect to $t$ over $(0,T)$ and taking advantage of \eqref{z3.1}, \eqref{z3.3}, and \eqref{z3.24}, we conclude that
\begin{align}\label{z3.25}
\int_0^T\|\sqrt{\rho}{\dot{\mathbf{u}}}\|_{L^2}^2\mathrm{d}t
&\leq CM+C(1+C_0)^4(2+M)^{\mathrm{e}^{D_2(1+C_0)^5}}
\ln\bigg\{(2+M)^{\mathrm{e}^{\frac{3}{2}D_2(1+C_0)^5}}\bigg\}C_0+C_0\notag\\
&\leq(2+M)^{e^{\frac{7}{4}D_2(1+C_0)^5}}
\end{align}
provided that $\lambda$ satisfies \eqref{lam} with $D\geq D_2$, which along with \eqref{z3.24} implies the desired \eqref{z3.7}.
\end{proof}

Next, we give the bound of $\frac{1}{2\mu+\lambda}\int_{0}^{T}\|P-P(\tilde{\rho})\|_{L^4}^4\mathrm{d}t$.
\begin{lemma}\label{l3.3}
Let \eqref{z3.1} be satisfied, then it holds that
\begin{align}\label{z3.26}
\frac{1}{2\mu+\lambda}\int_{0}^{T}\|P-P(\tilde{\rho})\|_{L^4}^4\mathrm{d}t\le (2+M)^{e^{3D_2(1+C_0)^5}}
\end{align}
provided that $\lambda$ satisfies \eqref{lam} with $D\geq 2D_2$.
\end{lemma}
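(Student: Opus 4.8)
The plan is to bound $\frac{1}{2\mu+\lambda}\int_0^T\|P-P(\tilde\rho)\|_{L^4}^4\,dt$ by exploiting the damping generated by the effective viscous flux $F$ on the pressure. The starting point is the renormalized mass equation: setting $G=P-P(\tilde\rho)$ and using $\eqref{a1}_1$, one gets $G_t+\mathbf{u}\cdot\nabla G+\gamma P\,\divv\mathbf{u}+\bigl(\gamma(P-P(\tilde\rho))+\gamma P(\tilde\rho)\bigr)\cdots$ — more precisely $G_t+\divv(G\mathbf{u})+(\gamma-1)G\,\divv\mathbf{u}+\gamma P(\tilde\rho)\divv\mathbf{u}=0$ (or an equivalent form). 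Substituting $\divv\mathbf{u}=\frac{F+G}{2\mu+\lambda}$ turns the term $\gamma P\,\divv\mathbf{u}$ into $\frac{\gamma P}{2\mu+\lambda}(F+G)$, which contains the \emph{good} damping term $\frac{\gamma P(\tilde\rho)}{2\mu+\lambda}G^{2}$-type contribution after multiplying by an appropriate power of $G$. Concretely, I would multiply the $G$-equation by $|G|^{2}G$ (so that $G^4$ appears) and integrate over $\mathbb{R}^2_+\times(0,t)$, tracking carefully the sign of the damping coefficient and the fact that $\rho$ is bounded by $2\hat\rho$ on $[0,T]$.

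After the integration by parts in the convective term $\int |G|^2 G\,\mathbf{u}\cdot\nabla G = -\frac14\int |G|^4\divv\mathbf{u}$, which again produces a $\frac{1}{2\mu+\lambda}\int|G|^4(F+G)$ contribution that can be partially absorbed, the estimate reduces to controlling two types of remainder: (i) terms like $\frac{1}{2\mu+\lambda}\int |G|^3 |F|$, which by Hölder and Young's inequality are bounded by $\varepsilon\,\frac{1}{2\mu+\lambda}\|G\|_{L^4}^4 + \frac{C}{2\mu+\lambda}\|F\|_{L^4}^4$, and (ii) terms involving $\frac{P(\tilde\rho)}{2\mu+\lambda}\int |G|^2 G\, F$ of lower order. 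The key structural point is that the leading damping term $\frac{c}{2\mu+\lambda}\int_0^T\|G\|_{L^4}^4\,dt$ (with $c$ depending on $\gamma,\tilde\rho,\hat\rho$) has a good sign, so it can absorb the $\varepsilon$-pieces, and one is left needing a bound for $\frac{1}{2\mu+\lambda}\int_0^T\|F\|_{L^4}^4\,dt$ plus the boundary-value contribution $\int_{\mathbb{R}^2_+}|G_0|^4\,d\mathbf{x}\le C\hat\rho^{\cdots}$ which is finite.

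The term $\frac{1}{2\mu+\lambda}\int_0^T\|F\|_{L^4}^4\,dt$ is then handled by the elliptic estimates of Lemma~\ref{E0}. From \eqref{E3} with $p=4$ one has $\|F\|_{L^4}^4\le C(2\mu+\lambda)^2\bigl(\|\rho\dot{\mathbf{u}}\|_{L^2}^2\|\nabla\mathbf{u}\|_{L^2}^2+\|\nabla\mathbf{u}\|_{L^2}^4\bigr)+C\|P-P(\tilde\rho)\|_{L^2}^2\bigl(\|\rho\dot{\mathbf{u}}\|_{L^2}^2+\|\nabla\mathbf{u}\|_{L^2}^2\bigr)$, so that
\begin{align*}
\frac{1}{2\mu+\lambda}\int_0^T\|F\|_{L^4}^4\,dt
&\le C(2\mu+\lambda)\sup_{[0,T]}\|\nabla\mathbf{u}\|_{L^2}^2\int_0^T\|\sqrt\rho\dot{\mathbf{u}}\|_{L^2}^2\,dt
+C(2\mu+\lambda)\sup_{[0,T]}\|\nabla\mathbf{u}\|_{L^2}^2\int_0^T\|\nabla\mathbf{u}\|_{L^2}^2\,dt\\
&\quad+\frac{C}{2\mu+\lambda}\|P-P(\tilde\rho)\|_{L^\infty L^2}^2\Bigl(\int_0^T\|\sqrt\rho\dot{\mathbf{u}}\|_{L^2}^2\,dt+\int_0^T\|\nabla\mathbf{u}\|_{L^2}^2\,dt\Bigr).
\end{align*}
Now plug in the energy bound \eqref{z3.3} (which gives $\int_0^T\|\nabla\mathbf{u}\|_{L^2}^2\,dt\le CC_0/\mu$ and $\|P-P(\tilde\rho)\|_{L^\infty L^2}^2\le CC_0$), together with Lemma~\ref{l3.2}, whose right side is $(2+M)^{e^{2D_2(1+C_0)^5}}$ and bounds both $\sup_{[0,T]}\bigl[\mu\|\nabla\mathbf{u}\|_{L^2}^2+(\mu+\lambda)\|\divv\mathbf{u}\|_{L^2}^2\bigr]$ and $\int_0^T\|\sqrt\rho\dot{\mathbf{u}}\|_{L^2}^2\,dt$. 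Crucially, $\lambda\le(2\mu+\lambda)$ but the product $(2\mu+\lambda)\cdot\sup\|\nabla\mathbf{u}\|_{L^2}^2$ is NOT automatically controlled — however, from Lemma~\ref{l3.2} we also have $(\mu+\lambda)\|\divv\mathbf{u}\|_{L^2}^2$ bounded, which only controls the divergence part; this is exactly the obstacle flagged in the introduction (only the curl-free part of $\nabla\mathbf{u}$ interacts well with $\lambda$). So the $(2\mu+\lambda)\|\nabla\mathbf{u}\|_{L^2}^2$ factor must be split via the Hodge decomposition (Lemma~\ref{Hodge}) into $(2\mu+\lambda)\|\divv\mathbf{u}\|_{L^2}^2$ (controlled by Lemma~\ref{l3.2}) plus $(2\mu+\lambda)\|\curl\mathbf{u}\|_{L^2}^2=(2\mu+\lambda)\|\mathcal{P}\mathbf{u}\text{-part}\|^2$, and the latter is only of size $\mu^{-1}(2\mu+\lambda)\cdot(\text{energy})$, i.e. genuinely large in $\lambda$. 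The fix is to use \eqref{E3} more carefully: write $\|F\|_{L^4}^2\le C\|F\|_{L^2}\|\nabla F\|_{L^2}$ and observe $\|F\|_{L^2}\le C(\|\rho\dot{\mathbf{u}}\|_{L^2}\cdots)$ — no, better: keep $\|F\|_{L^2}\le(2\mu+\lambda)\|\divv\mathbf{u}\|_{L^2}+\|G\|_{L^2}$ so that $\frac{1}{2\mu+\lambda}\|F\|_{L^2}^2\le C(2\mu+\lambda)\|\divv\mathbf{u}\|_{L^2}^2+\frac{C}{2\mu+\lambda}\|G\|_{L^2}^2$, both now controlled by Lemma~\ref{l3.2} and the energy estimate. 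Then $\frac{1}{2\mu+\lambda}\|F\|_{L^4}^4\le\frac{C}{2\mu+\lambda}\|F\|_{L^2}^2\|\nabla F\|_{L^2}^2\le C\bigl((2\mu+\lambda)\|\divv\mathbf{u}\|_{L^2}^2+\tfrac{1}{2\mu+\lambda}\|G\|_{L^2}^2\bigr)\|\nabla F\|_{L^2}^2$, and $\|\nabla F\|_{L^2}^2\le C(\|\rho\dot{\mathbf{u}}\|_{L^2}^2+\|\nabla\mathbf{u}\|_{L^2}^2)$ by \eqref{E1}; integrating in time and using $\int_0^T\|\nabla F\|_{L^2}^2\,dt\le C\bigl(\int_0^T\|\sqrt\rho\dot{\mathbf{u}}\|_{L^2}^2\,dt+\int_0^T\|\nabla\mathbf{u}\|_{L^2}^2\,dt\bigr)$ closes the bound by $(2+M)^{e^{3D_2(1+C_0)^5}}$ for $D_2$ large, since the exponent gains only a bounded multiplicative factor. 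The main obstacle throughout is this bookkeeping with the $\lambda$-weights: one must never let an unweighted $(2\mu+\lambda)\|\nabla\mathbf{u}\|_{L^2}^2$ survive, always reducing it to $(2\mu+\lambda)\|\divv\mathbf{u}\|_{L^2}^2$ (fine) or to an $L^2$-pressure term divided by $2\mu+\lambda$ (also fine), which is precisely why the effective-viscous-flux structure is indispensable.
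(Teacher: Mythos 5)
Your overall strategy is the same as the paper's: multiply the renormalized pressure equation by a power of $G=P-P(\tilde\rho)$, exploit the substitution $\divv\mathbf{u}=\frac{F+G}{2\mu+\lambda}$ to produce a $\frac{1}{2\mu+\lambda}\|G\|_{L^4}^4$ damping term, and reduce to controlling $\frac{1}{2\mu+\lambda}\int_0^T\|F\|_{L^4}^4\,dt$. Your final route for the $F$-term --- $\|F\|_{L^4}^4\le C\|F\|_{L^2}^2\|\nabla F\|_{L^2}^2$, with $\|F\|_{L^2}\le(2\mu+\lambda)\|\divv\mathbf{u}\|_{L^2}+\|G\|_{L^2}$ and $\|\nabla F\|_{L^2}^2\le C(\|\sqrt\rho\dot{\mathbf{u}}\|_{L^2}^2+\|\nabla\mathbf{u}\|_{L^2}^2)$ from \eqref{E1} --- is exactly what the paper does; your initial detour through \eqref{E3} and the Hodge decomposition is a useful sanity check but converges to the same estimate. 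The exponent bookkeeping at the end is also fine.

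There is, however, a genuine problem in your choice of multiplier. The paper multiplies \eqref{z3.27} by $3G^2$, so the convective term and the term $\gamma G\,\divv\mathbf{u}$ combine to $(3\gamma-1)\int G^3\divv\mathbf{u}$, and substituting $\divv\mathbf{u}=\frac{F+G}{2\mu+\lambda}$ in \emph{this} piece yields the damping $\frac{3\gamma-1}{2\mu+\lambda}\int G^4$ with a coefficient $3\gamma-1>0$ depending only on $\gamma$; the remaining constant-pressure term $3\gamma P(\tilde\rho)\int G^2\divv\mathbf{u}$ is handled by Young without substitution. You instead multiply by $|G|^2G=G^3$. Then the convective term gives $-\tfrac14\int G^4\divv\mathbf{u}$, which combined with $\gamma\int G^4\divv\mathbf{u}$ produces $(\gamma-\tfrac14)\int G^4\divv\mathbf{u}$; after substituting $\divv\mathbf{u}$, this generates the sign-indefinite, top-order term
\begin{equation*}
\frac{\gamma-\tfrac14}{2\mu+\lambda}\int G^5\,\mathrm{d}\mathbf{x},
\end{equation*}
while the only damping you obtain is $\frac{\gamma P(\tilde\rho)}{2\mu+\lambda}\int G^4$ from the lower-order piece $\gamma P(\tilde\rho)\int G^3\divv\mathbf{u}$. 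Bounding $|\int G^5|\le\|G\|_{L^\infty}\int G^4\le C(\hat\rho,a,\gamma)\int G^4$ only helps if $(\gamma-\tfrac14)\,C(\hat\rho,a,\gamma)<\gamma P(\tilde\rho)$, and since $\|G\|_{L^\infty}$ can be of order $a(2\hat\rho)^\gamma\gg P(\tilde\rho)=a\tilde\rho^\gamma$ in the large-data regime this theorem is designed for, the absorption fails. Your phrase ``can be partially absorbed'' glosses over this, and your listed remainders (i) and (ii) are both of the form $\int|G|^3|F|$, which is inconsistent with having substituted $\divv\mathbf{u}$ in the degree-four term. The fix is either to use the paper's multiplier $3G^2$, or, keeping $G^3$, to \emph{not} substitute $\divv\mathbf{u}$ in $(\gamma-\tfrac14)\int G^4\divv\mathbf{u}$ and instead bound it by $\|G\|_{L^\infty}^2\|G\|_{L^4}^2\|\divv\mathbf{u}\|_{L^2}$ followed by Young, which yields an absorbable $\frac{\varepsilon}{2\mu+\lambda}\|G\|_{L^4}^4$ plus a harmless $C(\hat\rho)(2\mu+\lambda)\|\divv\mathbf{u}\|_{L^2}^2$; but as written, the argument does not close.
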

\begin{proof}
It follows from $\eqref{a1}_1$ and $P(\rho)=a\rho^\gamma$ that
\begin{equation}\label{z3.27}
  (P-P(\tilde{\rho}))_t+\mathbf{u}\cdot\nabla(P-P(\tilde{\rho}))+\gamma (P-P(\tilde{\rho}))\divv\mathbf{u}+\gamma P(\tilde{\rho})\divv\mathbf{u}=0.
\end{equation}
Multiplying $\eqref{z3.27}$ by $3(P-P(\tilde{\rho}))^2$ and integrating the resulting equality over $\mathbb{R}^2_+$, we obtain that
\begin{align*}
&\frac{3\gamma-1}{2\mu+\lambda}\left\|P-P(\tilde{\rho})\right\|_{L^4}^4\notag\\
&=-\frac{\mathrm{d}}{\mathrm{d}t}\int(P-P(\tilde{\rho}))^3\mathrm{d}\mathbf{x}
-\frac{3\gamma-1}{2\mu+\lambda}\int(P-P(\tilde{\rho}))^3F\mathrm{d}\mathbf{x}
-3\gamma P(\tilde{\rho})\int(P-P(\tilde{\rho}))^2\divv\mathbf{u}\mathrm{d}\mathbf{x}\notag\\
&\leq-\frac{\mathrm{d}}{\mathrm{d}t}\int(P-P(\tilde{\rho}))^3\mathrm{d}\mathbf{x}+
\frac{3\gamma-1}{2(2\mu+\lambda)}\left\|P-P(\tilde{\rho})\right\|_{L^4}^4+\frac{C}{2\mu+\lambda}\left\|F\right\|_{L^4}^4
+C(2\mu+\lambda)\left\|\divv\mathbf{u}\right\|_{L^2}^2\notag.
\end{align*}
Integrating the above inequality over $(0,T)$, one deduces from \eqref{z3.3}, \eqref{z3.7}, and Lemma $\ref{l3.2}$ that
\begin{align}\label{z3.28}
&\frac{1}{2\mu+\lambda}\int_{0}^{T}\|P-P(\tilde{\rho})\|_{L^{4}}^{4}\mathrm{d}t\notag\\
&\leq C\sup_{0\leq t\leq T}\left\|P-P(\tilde{\rho})\right\|_{L^3}^3+\frac{C}{2\mu+\lambda}\int_0^T\|F\|_{L^2}^2\|\nabla F\|_{L^2}^2\mathrm{d}t+CC_0\notag\\
&\leq C(\tilde{\rho},\hat{\rho},a,\gamma)C_0+\frac{C}{2\mu+\lambda}\int_0^T\left[(2\mu+\lambda)^2
\|\divv\mathbf{u}\|_{L^2}^2+\|P-P(\tilde{\rho})\|_{L^2}^2\right]
\big(\|\sqrt{\rho}\dot{\mathbf{u}}\|_{L^2}^2+\|\nabla\mathbf{u}\|_{L^2}^2\big)\mathrm{d}t\notag\\
&\leq(2+M)^{\mathrm{e}^{3D_{2}(1+C_{0})^5}},
\end{align}
as the desired \eqref{z3.26}.
\end{proof}

Next, motivated by \cite{Hoff95,Hoff95*,HWZ}, we have the following time-weighted estimate on  $\|\sqrt{\rho}{\dot{\mathbf{u}}}\|_{L^2}^2$.
\begin{lemma}\label{l3.4}
Let \eqref{z3.1} be satisfied, then it holds that
\begin{align}\label{z3.29}
\sup_{0\leq t\leq T}(\sigma\|\sqrt{\rho}{\dot{\mathbf{u}}}\|_{L^2}^2)+\int_0^T\big[\mu\sigma\|\nabla \dot{\mathbf{u}}\|_{L^2}^2+(\mu+\lambda)\sigma\|\divf \mathbf{u}\|_{L^2}^2\big]\mathrm{d}t
\leq\exp\Big\{(2+M)^{\mathrm{e}^{4D_{2}(1+C_{0})^5}}\Big\}
\end{align}
provided that $\lambda$ satisfies \eqref{lam} with $D\geq 3D_2$.
\end{lemma}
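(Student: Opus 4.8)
The plan is to apply the operator $\partial_t+\divv(\mathbf{u}\,\cdot\,)$ to the $j$-th component of the momentum equation $\eqref{z3.8}$. By the mass equation $\eqref{a1}_1$ the left side becomes $\rho\ddot{u}^{j}$, so
\[
\rho\ddot{u}^{j}=\mu\big(\partial_t\Delta u^{j}+\divv(\mathbf{u}\Delta u^{j})\big)
+(\mu+\lambda)\big(\partial_t\partial_j\divv\mathbf{u}+\divv(\mathbf{u}\partial_j\divv\mathbf{u})\big)
-\partial_t\partial_jP-\divv(\mathbf{u}\partial_jP).
\]
Multiplying by $\sigma\dot{u}^{j}$, summing in $j$, and integrating over $\mathbb{R}^2_+$, the left-hand side produces $\tfrac12\frac{d}{dt}\big(\sigma\int\rho|\dot{\mathbf{u}}|^2\,\mathrm{d}\mathbf{x}\big)$ minus $\tfrac{\sigma'}{2}\int\rho|\dot{\mathbf{u}}|^2\,\mathrm{d}\mathbf{x}$; since $\sigma'\le1$ and $\sigma'=0$ for $t\ge1$, the latter is absorbed after integration in time by $\int_0^T\|\sqrt{\rho}\dot{\mathbf{u}}\|_{L^2}^2\,\mathrm{d}t$ from Lemma $\ref{l3.2}$. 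The shear term is integrated by parts twice, yielding $-\mu\sigma\|\nabla\dot{\mathbf{u}}\|_{L^2}^2$, commutators of the schematic form $\sigma\int\nabla\dot{\mathbf{u}}\cdot\nabla\mathbf{u}\cdot\nabla\mathbf{u}$, and a boundary integral on $\partial\mathbb{R}^2_+$ handled via the slip condition $\eqref{a3}$ and the flatness of the boundary; these are estimated with Gagliardo--Nirenberg (Lemma $\ref{GN}$) together with Lemmas $\ref{E0}$ and $\ref{l3.2}$. The pressure term is integrated by parts and combined with the transport equation $\eqref{z3.27}$ for $P-P(\tilde{\rho})$, giving $\sigma\int F\,\divv\dot{\mathbf{u}}$-type contributions and boundary terms controlled by the trace inequality $\|g\|_{L^2(\partial\mathbb{R}^2_+)}^2\le C\|g\|_{L^2}\|\nabla g\|_{L^2}+C\|g\|_{L^2}^2$; the pieces of the form $\frac{1}{(2\mu+\lambda)^2}\|P-P(\tilde{\rho})\|_{L^4}^4$ are integrable in $t$ by Lemma $\ref{l3.3}$ and the hypothesis $\eqref{z3.1}$.

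The delicate ingredient is the bulk term $(\mu+\lambda)\sigma\int\dot{u}^{j}\big(\partial_j\divv\mathbf{u}_t+\divv(\mathbf{u}\partial_j\divv\mathbf{u})\big)\mathrm{d}\mathbf{x}$. Integrating by parts in $x_j$ and then in $x_k$, and reorganizing the time and convective derivatives so as to reconstruct the \emph{material} derivative of $\divv\mathbf{u}$, I expect to obtain $-(\mu+\lambda)\sigma\int(\divf\mathbf{u})^2\,\mathrm{d}\mathbf{x}$ — rather than $-(\mu+\lambda)\sigma\|\divv\dot{\mathbf{u}}\|_{L^2}^2$ — plus quadratic-in-$\nabla\mathbf{u}$ commutators and a boundary integral on $\partial\mathbb{R}^2_+$. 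That this yields a genuine $(\mu+\lambda)$-damping is seen from $(2\mu+\lambda)\divv\mathbf{u}=F+P-P(\tilde{\rho})$, hence $(2\mu+\lambda)\divf\mathbf{u}=\dot{F}+\dot{(P-P(\tilde{\rho}))}$ with $\dot{(P-P(\tilde{\rho}))}$ supplied by $\eqref{z3.27}$; this is exactly the device that lets us isolate $\lambda$.

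The boundary contributions generated by the bulk (and shear) manipulation are the heart of the argument. Using the Hodge decomposition $\mathbf{u}=\mathcal{P}\mathbf{u}+\mathcal{Q}\mathbf{u}$ (Lemma $\ref{Hodge}$) and the flatness of $\partial\mathbb{R}^2_+$, one has $\mathbf{u}\cdot\nabla(\mathcal{P}\mathbf{u})\cdot\mathbf{n}=0$ there, so the analytically intractable contribution $\int_{\partial\mathbb{R}^2_+}F(\mathcal{P}\mathbf{u})\cdot\nabla(\mathcal{P}\mathbf{u})\cdot\mathbf{n}\,\mathrm{ds}$ is replaced by $-\int_{\partial\mathbb{R}^2_+}F(\mathcal{Q}\mathbf{u})\cdot\nabla(\mathcal{P}\mathbf{u})\cdot\mathbf{n}\,\mathrm{ds}$, which is admissible because only $\|\nabla\mathcal{Q}\mathbf{u}\|_{L^p}$ (the curl-free part, controlled by $F$ and $P-P(\tilde{\rho})$ via $\eqref{E4}$) can absorb the factor $2\mu+\lambda$ hidden in $F$. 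A further careful arrangement must ensure the boundary term appears as $\int_{\partial\mathbb{R}^2_+}F(\mathcal{P}\mathbf{u})\cdot\nabla(\mathcal{P}\mathbf{u})_t\cdot\mathbf{n}\,\mathrm{ds}$ rather than $\int_{\partial\mathbb{R}^2_+}F(\mathcal{P}\mathbf{u})_t\cdot\nabla(\mathcal{P}\mathbf{u})\cdot\mathbf{n}\,\mathrm{ds}$, since at this stage only $\|\nabla\mathcal{Q}\mathbf{u}\|_{L^p}$ — not $\|\nabla\mathcal{Q}\mathbf{u}_t\|_{L^p}$ — is available to restrain the viscosity. All such boundary pieces are then closed with the trace inequality and Lemmas $\ref{E0}$, $\ref{l3.2}$, $\ref{l3.3}$.

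Assembling everything produces a differential inequality
\[
\frac{d}{dt}\Phi(t)+\mu\sigma\|\nabla\dot{\mathbf{u}}\|_{L^2}^2+(\mu+\lambda)\sigma\|\divf\mathbf{u}\|_{L^2}^2
\le C\,g(t)\,\Phi(t)+h(t),
\]
where $\Phi(t)$ equals $\sigma\|\sqrt{\rho}\dot{\mathbf{u}}\|_{L^2}^2$ up to controlled corrections and $g,h\ge0$ satisfy $\int_0^T(g+h)\,\mathrm{d}t\le(2+M)^{\mathrm{e}^{cD_2(1+C_0)^5}}$ by Lemmas $\ref{l3.2}$, $\ref{l3.3}$ and $\eqref{z3.1}$ (the factor $\lambda$ in various denominators being handled by the choice $\eqref{lam}$). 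Gronwall's inequality then gives $\eqref{z3.29}$. The main obstacle is unmistakably the bulk-viscosity term together with its boundary contributions: extracting a genuine $(\mu+\lambda)$-damping through $\divf\mathbf{u}$ while keeping every boundary integral mixing $F$ with $\nabla\mathcal{P}\mathbf{u}$ controllable by the curl-free part alone, so that the large factor $2\mu+\lambda$ never lands on the estimated side.
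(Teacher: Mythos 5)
Your proposal is correct and takes essentially the same route as the paper: applying $\sigma\dot{u}^j[\partial_t+\divv(\mathbf{u}\,\cdot)]$ to the momentum equation, extracting the damping $-(\mu+\lambda)\sigma\|\divf\mathbf{u}\|_{L^2}^2$ rather than $-(\mu+\lambda)\sigma\|\divv\dot{\mathbf{u}}\|_{L^2}^2$, Hodge-splitting $\nabla\mathbf{u}:\nabla\mathbf{u}$ into $\mathcal{P}\mathbf{u}$ and $\mathcal{Q}\mathbf{u}$ parts, using $\mathbf{u}\cdot\nabla(\mathcal{P}\mathbf{u})\cdot\mathbf{n}=0$ on the flat boundary to trade $\mathcal{P}\mathbf{u}$ for $-\mathcal{Q}\mathbf{u}$, arranging the total time derivative so that $\nabla(\mathcal{P}\mathbf{u})_t$ rather than $(\mathcal{P}\mathbf{u})_t$ lands in the boundary integral, and closing with Gronwall via Lemmas \ref{l3.2}--\ref{l3.3}. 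The only minor technical deviation is that you invoke a trace inequality for the remaining boundary pieces whereas the paper converts them to volume integrals via the divergence theorem (and, for the $J_2$ boundary term, an explicit anti-derivative in $x_2$); both devices serve the same purpose here.
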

\begin{proof}
Operating $\sigma\dot{u}^j[\partial/\partial t+\divv({\mathbf{u}}\cdot)]$ on $\eqref{z3.8}^j$, summing all the equalities with respect to $j$, and integrating the resultant over $\mathbb{R}^2_+$, we get from
$\eqref{a1}_1$ and \eqref{a3} that
\begin{align}\label{z3.30}
&\frac{1}{2}\frac{\mathrm{d}}{\mathrm{d}t}\int\sigma\rho|\dot{\mathbf{u}}|^2\mathrm{d}\mathbf{x}
-\frac{\sigma'}{2}\int\rho|\dot{\mathbf{u}}|^2\mathrm{d}\mathbf{x}\notag\\
&=-\sigma\int\dot{u}^j[\partial_j P_{t}+\divv(\mathbf{u}\partial_{j}P)]\mathrm{d}\mathbf{x}
+\mu\sigma\int\dot{u}^j\big[\Delta u_{t}^j+\divv\big( \mathbf{u}\Delta u^{j}\big)\big]\mathrm{d}\mathbf{x}\notag\\
&\quad+(\mu+\lambda)\sigma\int\dot{u}^j[\partial_j\divv\mathbf{u}_t +\divv(\mathbf{u}\partial_j\divv\mathbf{u})]\mathrm{d}\mathbf{x}\triangleq \sum_{i=1}^{3}J_i.
\end{align}
Noting that $\partial\mathbb{R}^2_+$ is flat and $\mathbf{n}=(0,-1)$, a straightforward calculation shows that
\begin{equation*}
u^2=\partial_1u^2=\dot{u}^2=\mathbf{u}\cdot\mathbf{n}=\dot{\mathbf{u}}\cdot\mathbf{n}=\dot{\mathbf{u}}\cdot\nabla\mathbf{u}\cdot\mathbf{n}=0 \ \ \text{on} \ \partial\mathbb{R}^2_+.
\end{equation*}

Integration by parts together with Cauchy--Schwarz inequality gives that
\begin{align}\label{z3.31}
J_{1}&=\sigma\int P_{t}\divv\dot{\mathbf{u}}\mathrm{d}\mathbf{x}-\sigma\int\dot{\mathbf{u}}\cdot\nabla\divv(P\mathbf{u})\mathrm{d}\mathbf{x}
+\sigma\int\dot{u}^j\divv(P\partial_{j}\mathbf{u})\mathrm{d}\mathbf{x}\notag\\
&=\sigma\int \big(P_{t}+\divv(P\mathbf{u})\big)\divv\dot{\mathbf{u}}\mathrm{d}\mathbf{x}
+\sigma\int\dot{\mathbf{u}}\cdot\nabla\mathbf{u}\cdot\nabla P\mathrm{d}\mathbf{x}
+\sigma\int P\dot{\mathbf{u}}\cdot\nabla\divv\mathbf{u}\mathrm{d}\mathbf{x}\notag\\
&=-\sigma\int(\gamma-1)P\divv\mathbf{u}\divv\dot{\mathbf{u}}\mathrm{d}\mathbf{x}
-\sigma\int P\nabla\dot{\mathbf{u}}:\nabla\mathbf{u} \mathrm{d}\mathbf{x}
\notag\\
&\leq \frac{\mu\sigma}{16}\|\nabla\dot{\mathbf{u}}\|_{L^2}^2+C\sigma\|\nabla\mathbf{u}\|_{L^2}^2.
\end{align}
According to Lemma $\ref{E0}$, one gets that
\begin{align*}
J_2&=\mu\sigma\int\dot{u}^j\big[\Delta \dot{u}^j-\Delta(\mathbf{u}\cdot\nabla u^j)+\divv\big( \mathbf{u}\Delta u^{j}\big)\big]\mathrm{d}\mathbf{x}\notag\\
&=\mu\sigma\int\big[-|\nabla\dot{\mathbf{u}}|^2+\dot{u}^j_i(u^ku_k^j)_i
-\dot{u}^j_i(u^ku^j_i)_k-\dot{u}^j(u^k_iu^j_i)_k\big]\mathrm{d}\mathbf{x}\notag\\
&\quad+\mu\sigma\int_{\partial\mathbb{R}^2_+}\big[\dot{u}^j\dot{u}^j_in^i-\dot{u}^j(u^ku_k^j)_in^i
+\dot{u}^j(u^ku^j_i)_kn^i\big]\mathrm{ds}\notag\\
&=\mu\sigma\int\big[-|\nabla\dot{\mathbf{u}}|^2+\dot{u}^j_i(u^ku_k^j)_i
-\dot{u}^j_i(u^ku^j_i)_k+\dot{u}_k^j(u^k_iu^j_i)\big]\mathrm{d}\mathbf{x}\notag\\
&\quad+\mu\sigma\int_{\partial\mathbb{R}^2_+}\big[\dot{u}^j\dot{u}^j_in^i-\dot{u}^j(u^ku_k^j)_in^i
+\dot{u}^j(u^ku^j_i)_kn^i-\dot{u}^j(u^k_iu^j_i)n^k\big]\mathrm{ds}\notag\\
&=\mu\sigma\int\big[-|\nabla\dot{\mathbf{u}}|^2+\dot{u}^j_i(u^ku_k^j)_i
-\dot{u}^j_i(u^ku^j_i)_k+\dot{u}_k^j(u^k_iu^j_i)\big]\mathrm{d}\mathbf{x}\notag\\
&\quad-\mu\sigma\int_{\partial\mathbb{R}^2_+}|\dot{\mathbf{u}}|^2\mathrm{ds}
-\mu\sigma\int_{\partial\mathbb{R}^2_+}(\dot{\mathbf{u}}\cdot\mathbf{u})\partial_1u^1\mathrm{ds}\notag\\
&\leq\mu\sigma\int\big[-|\nabla\dot{\mathbf{u}}|^2+\dot{u}^j_i(u^ku_k^j)_i
-\dot{u}^j_i(u^ku^j_i)_k+\dot{u}_k^j(u^k_iu^j_i)\big]\mathrm{d}\mathbf{x}
-\mu\sigma\int_{\partial\mathbb{R}^2_+}(\dot{\mathbf{u}}\cdot\mathbf{u})\partial_1u^1\mathrm{ds}.
\end{align*}
To handle the boundary integral of the above inequality, we apply the fact that,
for $h\in(C^1\cap W^{1,1})(\overline{\mathbb{R}^2_+})$,
\begin{align*}
&\int_{\partial\mathbb{R}^2_+}h(\mathbf{x})\mathrm{ds}=\int_{\mathbb{R}^2_+\cap\{0\leq x_2\leq1\}}\partial_2[(x_2-1)h(\mathbf{x})]\mathrm{d}\mathbf{x}=\int_{\mathbb{R}^2_+\cap\{0\leq x_2\leq1\}}[h(\mathbf{x})+(x_2-1)h_{x_2}(\mathbf{x})]\mathrm{d}\mathbf{x}.
\end{align*}
Then, integrating by parts in the $x_1$ direction, we obtain the upper bound
\begin{equation*}
  \mu\sigma\int\big(|\mathbf{u}||\nabla\mathbf{u}||\dot{\mathbf{u}}|+|\nabla\mathbf{u}|^2|\dot{\mathbf{u}}|
  +|\mathbf{u}||\nabla\mathbf{u}||\nabla\dot{\mathbf{u}}|\big)\mathrm{d}\mathbf{x}.
\end{equation*}
This combined with \eqref{z3.13} and the following inequality
\begin{align}\label{z3.36}
\|\dot{\mathbf{u}}\|_{L^2}\leq C\|\sqrt{\rho}\dot{\mathbf{u}}\|_{L^2}
+CC_0^{\frac12}\|\nabla\dot{\mathbf{u}}\|_{L^2},
\end{align}
which can be proved similarly to \eqref{z3.13} by replacing $\mathbf{u}$ with $\dot{\mathbf{u}}$, indicates that
\begin{align}\label{z3.32}
J_2&\leq-\frac{3\mu\sigma}{4}\|\nabla\dot{\mathbf{u}}\|_{L^2}^2+CC_0\sigma\|\nabla\mathbf{u}\|_{L^4}^4
+C C_0\sigma \|\nabla\mathbf{u}\|_{L^2}^2\big(1+\|\nabla\mathbf{u}\|_{L^2}^2\big)
+C\sigma\|\sqrt{\rho}\dot{\mathbf{u}}\|_{L^2}^2\notag\\
&\leq-\frac{3\mu\sigma}{4}\|\nabla\dot{\mathbf{u}}\|_{L^2}^2+C(1+C_0)
\sigma\big(1+\|\nabla\mathbf{u}\|_{L^2}^2\big) \big(1+\|\sqrt{\rho}\dot{\mathbf{u}}\|_{L^2}^2+\|\nabla\mathbf{u}\|_{L^2}^2\big)\notag\\
&\quad+\frac{CC_0^2\sigma}{(2\mu+\lambda)^4}
\big(\|\sqrt{\rho}\dot{\mathbf{u}}\|_{L^2}^2+\|\nabla\mathbf{u}\|_{L^2}^2\big)
+\frac{CC_0}{(2\mu+\lambda)^4}\|P-P(\tilde{\rho})\|_{L^4}^4,
\end{align}
due to $0\leq\sigma, \sigma'\leq1$ for $t>0$.
Moreover, in view of \eqref{a3} and \eqref{a4}, we have the following decomposition
\begin{align}\label{z3.33}
J_3&=\sigma(\mu+\lambda)\int\dot{u}^j[\partial_j\divv\mathbf{u}_t+ \divv(\mathbf{u}\partial_j\divv\mathbf{u})]\mathrm{d}\mathbf{x}\notag\\
&=\sigma(\mu+\lambda)\int\dot{u}^j[\partial_j\divv\mathbf{u}_t+ \partial_j\divv(\mathbf{u}\divv\mathbf{u})-\divv(\partial_j\mathbf{u}\divv\mathbf{u})]\mathrm{d}\mathbf{x}\notag\\
&=-\sigma(\mu+\lambda)\int\divv\dot{\mathbf{u}}[\divv\mathbf{u}_t+\divv(\mathbf{u}\divv\mathbf{u})]\mathrm{d}\mathbf{x}-\sigma(\mu+\lambda)\int\dot{u}^j\divv(\partial_j\mathbf{u}\divv\mathbf{u})\mathrm{d}\mathbf{x}\notag\\
&=-\sigma(\mu+\lambda)\int(\divv\mathbf{u}_t+\mathbf{u}\cdot\nabla\divv\mathbf{u}+\nabla\mathbf{u}:\nabla\mathbf{u})
[\divv\mathbf{u}_t+\mathbf{u}\cdot\nabla\divv\mathbf{u}+(\divv\mathbf{u})^2]\mathrm{d}\mathbf{x}
\notag\\&\quad
-\sigma(\mu+\lambda)\int\dot{u}^j\divv(\partial_j\mathbf{u}\divv\mathbf{u})\mathrm{d}\mathbf{x}\notag\\
&=-\sigma(\mu+\lambda)\int\big[(\divf\mathbf{u})^2
+\divf\mathbf{u}(\divv\mathbf{u})^2+\divf\mathbf{u}\nabla\mathbf{u}:\nabla\mathbf{u}
+(\divv\mathbf{u})^2\nabla\mathbf{u}:\nabla\mathbf{u}
-\partial_j\mathbf{u}\cdot\nabla\dot{u}^j\divv\mathbf{u}\big]\mathrm{d}\mathbf{x}\notag\\
&\triangleq-\sigma(\mu+\lambda)\|\divf\mathbf{u}\|_{L^2}^2+\sum_{i=1}^4J_{3i},
\end{align}
where we define
\begin{equation*}
  \divf\mathbf{u}\triangleq\divv\mathbf{u}_t+\mathbf{u}\cdot\nabla\divv\mathbf{u}.
\end{equation*}

Our next goal is to bound each $J_{3i}$. One infers from Lemmas $\ref{GN}$, $\ref{E0}$, and H\"older's inequality that
\begin{align}\label{z3.34}
J_{31}&=-\frac{\sigma(\mu+\lambda)}{(2\mu+\lambda)^2}\int\divf\mathbf{u}(F+P-P(\tilde{\rho}))^2\mathrm{d}\mathbf{x}\notag\\
&\leq\frac{C\sigma(\mu+\lambda)}{(2\mu+\lambda)^2}\|\divf\mathbf{u}\|_{L^2}
\big(\|F\|_{L^4}^2+\|P-P(\tilde{\rho})\|_{L^4}^2\big)\notag\\
&\leq\frac{C\sigma(\mu+\lambda)}{(2\mu+\lambda)^2}\|\divf\mathbf{u}\|_{L^2}\big(\|F\|_{L^2}\|\nabla F\|_{L^2}+\|P-P(\tilde{\rho})\|_{L^4}^2\big)\notag\\
&\leq\frac{C\sigma(\mu+\lambda)}{(2\mu+\lambda)^2}\|\divf\mathbf{u}\|_{L^2}\left[\big((2\mu+\lambda)
\|\divv\mathbf{u}\|_{L^2}+\|P-P(\tilde{\rho})\|_{L^2}\big)
(\|\sqrt{\rho}\dot{\mathbf{u}}\|_{L^2}+\|\nabla\mathbf{u}\|_{L^2})+\|P-P(\tilde{\rho})\|_{L^4}^2\right]\notag\\
&\leq\frac{\sigma(\mu+\lambda)}{16}\|\divf\mathbf{u}\|_{L^2}^2
+C\sigma\big(C_0+\|\nabla\mathbf{u}\|_{L^2}^2\big)
\big(\|\sqrt{\rho}\dot{\mathbf{u}}\|_{L^2}^2+\|\nabla\mathbf{u}\|_{L^2}^2\big)
+\frac{C}{(2\mu+\lambda)^3}\|P-P(\tilde{\rho})\|_{L^4}^4.
\end{align}
Using the Hodge-type decomposition, Lemma $\ref{E0}$, and H\"older's inequality, one gets that
\begin{align}\label{z3.35}
J_{32}
&=-\sigma(\mu+\lambda)\int\divf\mathbf{u}(\mathcal{Q}\mathbf{u}+\mathcal{P}\mathbf{u})_j^i(\mathcal{Q}\mathbf{u}
+\mathcal{P}\mathbf{u})_i^j\mathrm{d}\mathbf{x}\notag\\
&=-\sigma(\mu+\lambda)\int\divf\mathbf{u}\big[(\mathcal{Q}\mathbf{u})_j^i(\mathcal{Q}\mathbf{u})_i^j+
(\mathcal{Q}\mathbf{u})_j^i(\mathcal{P}\mathbf{u})_i^j+(\mathcal{P}\mathbf{u})_j^i(\mathcal{Q}\mathbf{u})_i^j
+(\mathcal{P}\mathbf{u})_j^i(\mathcal{P}\mathbf{u})_i^j\big]\mathrm{d}\mathbf{x}\notag\\
&\leq-\sigma(\mu+\lambda)\int\divf\mathbf{u}(\mathcal{P}\mathbf{u})_j^i(\mathcal{P}\mathbf{u})_i^j\mathrm{d}\mathbf{x}
+\frac{\sigma(\mu+\lambda)}{16}\|\divf\mathbf{u}\|_{L^2}^2+C\sigma(\mu+\lambda)\int|\nabla\mathbf{u}|^2
|\nabla\mathcal{Q}\mathbf{u}|^2\mathrm{d}\mathbf{x}\notag\\
&\triangleq \tilde{J}_{32}+\frac{\sigma(\mu+\lambda)}{16}\|\divf\mathbf{u}\|_{L^2}^2+C\sigma(\mu+\lambda)\int|\nabla\mathbf{u}|^2
|\nabla\mathcal{Q}\mathbf{u}|^2\mathrm{d}\mathbf{x}\notag\\
&\leq\tilde{J}_{32}+\frac{\sigma(\mu+\lambda)}{16}\|\divf\mathbf{u}\|_{L^2}^2+C\sigma\|\nabla\mathbf{u}\|_{L^4}^4
+C\sigma(\mu+\lambda)^2\|\divv\mathbf{u}\|_{L^4}^4\notag\\
&\leq\tilde{J}_{32}+\frac{\sigma(\mu+\lambda)}{16}\|\divf\mathbf{u}\|_{L^2}^2
+C\sigma\big(C_0+\|\nabla\mathbf{u}\|_{L^2}^2\big)
\big(\|\sqrt{\rho}\dot{\mathbf{u}}\|_{L^2}^2+\|\nabla\mathbf{u}\|_{L^2}^2\big)
+\frac{C}{(2\mu+\lambda)^2}\|P-P(\tilde{\rho})\|_{L^4}^4\notag\\&\quad
+\frac{CC_0\sigma}{(2\mu+\lambda)^2}
\big(\|\sqrt{\rho}\dot{\mathbf{u}}\|_{L^2}^2+\|\nabla\mathbf{u}\|_{L^2}^2\big),
\end{align}
where we observe that
\begin{align}
 \tilde{J}_{32}
 &=-\sigma(\mu+\lambda)\int(\mathbf{u}\cdot\nabla\divv\mathbf{u})(\mathcal{P}\mathbf{u})_j^i(\mathcal{P}\mathbf{u})_i^j\mathrm{d}\mathbf{x}
 -\sigma(\mu+\lambda)\int\divv\mathbf{u}_t(\mathcal{P}\mathbf{u})_j^i(\mathcal{P}\mathbf{u})_i^j\mathrm{d}\mathbf{x}\notag\\
 &=-\sigma(\mu+\lambda)\int(\mathbf{u}\cdot\nabla\divv\mathbf{u})(\mathcal{P}\mathbf{u})_j^i(\mathcal{P}\mathbf{u})_i^j\mathrm{d}\mathbf{x}
 -\frac{\sigma(\mu+\lambda)}{2\mu+\lambda}\int(P-P(\tilde{\rho}))_t(\mathcal{P}\mathbf{u})_j^i(\mathcal{P}\mathbf{u})_i^j\mathrm{d}\mathbf{x}\notag\\&\quad-\frac{\sigma(\mu+\lambda)}{2\mu+\lambda}\int F_t(\mathcal{P}\mathbf{u})_j^i(\mathcal{P}\mathbf{u})_i^j\mathrm{d}\mathbf{x}
 \triangleq J_{321}+J_{322}+J_{323}.\notag
\end{align}

By \eqref{z3.14}, \eqref{z3.27}, Lemma $\ref{GN}$, and Lemma $\ref{E0}$, we conclude that
\begin{align}\label{z3.37}
J_{321}&=-\frac{\sigma(\mu+\lambda)}{2\mu+\lambda}\int\mathbf{u}\cdot\nabla(F+P-P(\tilde{\rho}))(\mathcal{P}\mathbf{u})_{j}^{i}(\mathcal{P}\mathbf{u})_{i}^{j}\mathrm{d}\mathbf{x}\notag\\
&\leq C\sigma\|\mathbf{u}\|_{L^4}\|\nabla F\|_{L^2}\|\nabla\mathcal{P}\mathbf{u}\|_{L^8}^2+\frac{\sigma(\mu+\lambda)}{2\mu+\lambda}\int(P-P(\tilde{\rho}))(\mathcal{P}\mathbf{u})_i^j\mathbf{u}\cdot\nabla(\mathcal{P}\mathbf{u})_j^i\mathrm{d}\mathbf{x}
\notag\\
&\quad+\frac{\sigma(\mu+\lambda)}{2\mu+\lambda}\int(P-P(\tilde{\rho}))(\mathcal{P}\mathbf{u})_{j}^{i}\mathbf{u}\cdot\nabla(\mathcal{P}\mathbf{u})_{i}^{j}\mathrm{d}\mathbf{x}
+\frac{\sigma(\mu+\lambda)}{2\mu+\lambda}\int(P-P(\tilde{\rho}))(\mathcal{P}\mathbf{u})_{j}^{i}(\mathcal{P}\mathbf{u})_{i}^{j}\divv\mathbf{u}\mathrm{d}\mathbf{x}\notag\\
&\leq C\sigma\Big(1+C_0^{\frac14}+\|\nabla\mathbf{u}\|_{L^{2}}^{2}\Big)
\big(1+\|\sqrt{\rho}\dot{\mathbf{u}}\|_{L^{2}}^{2}
+\|\nabla\mathbf{u}\|_{L^{2}}^{2}\big)
\big(\|\sqrt{\rho}\dot{\mathbf{u}}\|_{L^{2}}^{2}
+\|\nabla\mathbf{u}\|_{L^{2}}^{2}\big)
\notag\\&\quad+C\sigma\|P-P(\tilde{\rho})\|_{L^\infty}\|\nabla\mathbf{u}\|_{L^2}\|\nabla\mathcal{P}\mathbf{u}\|_{L^4}^2
+C\sigma\|P-P(\tilde{\rho})\|_{L^\infty}\|\nabla\mathcal{P}\mathbf{u}\|_{L^4}
\|\mathbf{u}\|_{L^4}\|\nabla^2\mathcal{P}\mathbf{u}\|_{L^2}\notag\\
&\leq
C\sigma\Big(1+C_0^{\frac14}+\|\nabla\mathbf{u}\|_{L^{2}}^{2}\Big)
\big(1+\|\sqrt{\rho}\dot{\mathbf{u}}\|_{L^{2}}^{2}
+\|\nabla\mathbf{u}\|_{L^{2}}^{2}\big)
\big(\|\sqrt{\rho}\dot{\mathbf{u}}\|_{L^{2}}^{2}
+\|\nabla\mathbf{u}\|_{L^{2}}^{2}\big),
\\
J_{322}&=-\frac{\sigma(\mu+\lambda)}{2\mu+\lambda}\int(P-P(\tilde{\rho}))_t
(\mathcal{P}\mathbf{u})_{j}^{i}(\mathcal{P}\mathbf{u})_{i}^{j}\mathrm{d}\mathbf{x}\notag\\
&=\frac{\sigma(\mu+\lambda)}{2\mu+\lambda}\int\big(\mathbf{u}\cdot\nabla(P-P(\tilde{\rho}))+\gamma (P-P(\tilde{\rho}))\divv\mathbf{u}+\gamma P(\tilde{\rho})\divv\mathbf{u}\big)
(\mathcal{P}\mathbf{u})_{j}^{i}(\mathcal{P}\mathbf{u})_{i}^{j}\mathrm{d}\mathbf{x}\notag\\
&\leq C\sigma\Big(1+C_0^{\frac14}+\|\nabla\mathbf{u}\|_{L^{2}}^{2}\Big)
\big(1+\|\sqrt{\rho}\dot{\mathbf{u}}\|_{L^{2}}^{2}+\|\nabla\mathbf{u}\|_{L^{2}}^{2}\big)
\big(\|\sqrt{\rho}\dot{\mathbf{u}}\|_{L^{2}}^{2}+\|\nabla\mathbf{u}\|_{L^{2}}^{2}\big).\label{z3.38}
\end{align}
Owing to $\divv(\mathcal{P}\mathbf{u})=0$, one deduces from \eqref{a3}, \eqref{z3.14}, \eqref{z3.36}, and Lemma $\ref{E0}$ that
\begin{align}\label{z3.39}
&J_{323}=-\frac{\sigma(\mu+\lambda)}{2\mu+\lambda}\int F_t
(\mathcal{P}\mathbf{u})_{j}^{i}(\mathcal{P}\mathbf{u})_{i}^{j}\mathrm{d}\mathbf{x}\notag\\
&=-\frac{\mu+\lambda}{2\mu+\lambda}\frac{\mathrm{d}}{\mathrm{d}t}\int\sigma F
(\mathcal{P}\mathbf{u})_{j}^{i}(\mathcal{P}\mathbf{u})_{i}^{j}\mathrm{d}\mathbf{x}
+\frac{\mu+\lambda}{2\mu+\lambda}\int \sigma'F
(\mathcal{P}\mathbf{u})_{j}^{i}(\mathcal{P}\mathbf{u})_{i}^{j}\mathrm{d}\mathbf{x}
+\frac{\mu+\lambda}{2\mu+\lambda}\int \sigma F
(\mathcal{P}\mathbf{u})_{jt}^{i}(\mathcal{P}\mathbf{u})_{i}^{j}\mathrm{d}\mathbf{x}
\notag\\
&\quad+\frac{\mu+\lambda}{2\mu+\lambda}\int \sigma F
(\mathcal{P}\mathbf{u})_{j}^{i}(\mathcal{P}\mathbf{u})_{it}^{j}\mathrm{d}\mathbf{x}\notag\\
&=\frac{\mu+\lambda}{2\mu+\lambda}\frac{\mathrm{d}}{\mathrm{d}t}\bigg(\int\sigma F_j
(\mathcal{P}\mathbf{u})^{i}(\mathcal{P}\mathbf{u})_{i}^{j}\mathrm{d}\mathbf{x}
-\int_{\partial\mathbb{R}^2_+}\sigma F
(\mathcal{P}\mathbf{u})^{i}(\mathcal{P}\mathbf{u})_{i}^{j}n^j\mathrm{ds}\bigg)
-\frac{\mu+\lambda}{2\mu+\lambda}\int \sigma'F_j
(\mathcal{P}\mathbf{u})^{i}(\mathcal{P}\mathbf{u})_{i}^{j}\mathrm{d}\mathbf{x}
\notag\\&\quad
-\frac{\mu+\lambda}{2\mu+\lambda}\int \sigma F_i
(\mathcal{P}\mathbf{u})_{jt}^{i}(\mathcal{P}\mathbf{u})^{j}\mathrm{d}\mathbf{x}
-\frac{\mu+\lambda}{2\mu+\lambda}\int \sigma F_j
(\mathcal{P}\mathbf{u})^{i}(\mathcal{P}\mathbf{u})_{it}^{j}\mathrm{d}\mathbf{x}
+\frac{\mu+\lambda}{2\mu+\lambda}\int_{\partial\mathbb{R}^2_+}\sigma' F(\mathcal{P}\mathbf{u})^{i}(\mathcal{P}\mathbf{u})_{i}^{j}n^j\mathrm{ds}
\notag\\&\quad
+\frac{\mu+\lambda}{2\mu+\lambda}\int_{\partial\mathbb{R}^2_+}\sigma F(\mathcal{P}\mathbf{u})_{jt}^{i}(\mathcal{P}\mathbf{u})^{j}n^i\mathrm{ds}
+\frac{\mu+\lambda}{2\mu+\lambda}\int_{\partial\mathbb{R}^2_+}\sigma F(\mathcal{P}\mathbf{u})^{i}(\mathcal{P}\mathbf{u})_{it}^{j}n^j\mathrm{ds}
\notag\\
&\triangleq
\frac{\mu+\lambda}{2\mu+\lambda}\frac{\mathrm{d}}{\mathrm{d}t}\bigg(\int\sigma F_j
(\mathcal{P}\mathbf{u})^{i}(\mathcal{P}\mathbf{u})_{i}^{j}\mathrm{d}\mathbf{x}
-\int_{\partial\mathbb{R}^2_+}\sigma F
(\mathcal{P}\mathbf{u})^{i}(\mathcal{P}\mathbf{u})_{i}^{j}n^j\mathrm{ds}\bigg)
-\frac{\mu+\lambda}{2\mu+\lambda}\int \sigma'F_j
(\mathcal{P}\mathbf{u})^{i}(\mathcal{P}\mathbf{u})_{i}^{j}\mathrm{d}\mathbf{x}
\notag\\&\quad
-\frac{\mu+\lambda}{2\mu+\lambda}\int \sigma F_i
\big(\mathcal{P}(\dot{\mathbf{u}}-\mathbf{u}\cdot\nabla\mathbf{u})\big)_j^{i}(\mathcal{P}\mathbf{u})^{j}\mathrm{d}\mathbf{x}
-\frac{\mu+\lambda}{2\mu+\lambda}\int \sigma F_j
(\mathcal{P}\mathbf{u})^{i}\big(\mathcal{P}(\dot{\mathbf{u}}-\mathbf{u}\cdot\nabla\mathbf{u})\big)_i^{j}\mathrm{d}\mathbf{x}+\sum_{i=1}^3\tilde{B}_{i}\notag\\
&\leq\frac{\mu+\lambda}{2\mu+\lambda}\frac{\mathrm{d}}{\mathrm{d}t}\bigg(\int\sigma F_j
(\mathcal{P}\mathbf{u})^{i}(\mathcal{P}\mathbf{u})_{i}^{j}\mathrm{d}\mathbf{x}
-\int_{\partial\mathbb{R}^2_+}\sigma F
(\mathcal{P}\mathbf{u})^{i}(\mathcal{P}\mathbf{u})_{i}^{j}n^j\mathrm{ds}\bigg)
+C\|\nabla F\|_{L^2}\|\mathbf{u}\|_{L^4}\|\nabla\mathcal{P}\mathbf{u}\|_{L^4}
\notag\\&\quad
+C\sigma\|\nabla F\|_{L^2}\|\mathcal{P}\mathbf{u}\|_{L^\infty}
\big(\|\nabla \dot{\mathbf{u}}\|_{L^2}+\|\nabla\mathbf{u}\|_{L^4}^2
+\|\mathbf{u}\|_{L^\infty}\|\nabla^2\mathcal{P}\mathbf{u}\|_{L^2}\big)
+\sum_{i=1}^3\tilde{B}_{i}
\notag\\
&\leq\frac{\mu+\lambda}{2\mu+\lambda}\frac{\mathrm{d}}{\mathrm{d}t}\bigg(\int\sigma F_j
(\mathcal{P}\mathbf{u})^{i}(\mathcal{P}\mathbf{u})_{i}^{j}\mathrm{d}\mathbf{x}
-\int_{\partial\mathbb{R}^2_+}\sigma F
(\mathcal{P}\mathbf{u})^{i}(\mathcal{P}\mathbf{u})_{i}^{j}n^j\mathrm{ds}\bigg)
+\frac{\mu\sigma}{16}\|\nabla \dot{\mathbf{u}}\|_{L^2}^2
\notag\\&\quad
+C\big(1+C_0+\|\nabla\mathbf{u}\|_{L^{2}}^{2}\big)
\big(1+\sigma\|\sqrt{\rho}\dot{\mathbf{u}}\|_{L^{2}}^{2}
+\|\nabla\mathbf{u}\|_{L^{2}}^{2}\big)
\big(\|\sqrt{\rho}\dot{\mathbf{u}}\|_{L^{2}}^{2}
+\|\nabla\mathbf{u}\|_{L^{2}}^{2}\big)\notag\\&\quad
+\frac{CC_0^\frac12\sigma}{(2\mu+\lambda)^2}\big(\|\sqrt{\rho}\dot{\mathbf{u}}\|_{L^2}^2
+\|\nabla\mathbf{u}\|_{L^2}^2\big)
+\frac{C}{(2\mu+\lambda)^4}\|P-P(\tilde{\rho})\|_{L^4}^4+\sum_{i=1}^3\tilde{B}_{i},
\end{align}
where we have used
\begin{gather}
\|\mathcal{P}\mathbf{u}\|_{L^\infty}\leq C\|\mathcal{P}\mathbf{u}\|_{L^4}^\frac12\|\nabla\mathcal{P}\mathbf{u}\|_{L^4}^\frac12+C\|\mathcal{P}\mathbf{u}\|_{L^2},\notag\\
\|\dot{\mathbf{u}}\|_{L^4}\leq C\|\dot{\mathbf{u}}\|_{L^2}^{\frac12}\|\nabla\dot{\mathbf{u}}\|_{L^2}^{\frac12}\leq C\|\sqrt{\rho}\dot{\mathbf{u}}\|_{L^2}^\frac12\|\nabla\dot{\mathbf{u}}\|_{L^2}^\frac12+CC_0^{\frac14}\|\nabla\dot{\mathbf{u}}\|_{L^2},\label{z3.40}
\end{gather}
due to Lemma $\ref{GN}$ and \eqref{z3.36}.

To control the boundary integral terms $\tilde{B}_{i}\ (i=1,2,3)$ in \eqref{z3.39}, note that
\begin{align}\label{z3.41}
\|\nabla\mathcal{Q}\mathbf{u}\|_{L^4}\leq C\|\divv\mathbf{u}\|_{L^4}
\leq \frac{C}{2\mu+\lambda}\big(\|F\|_{L^4}+\|P(\rho)-P(\tilde{\rho})\|_{L^4}\big),
\end{align}
and that, using the boundary condition \eqref{a3},
\begin{align}\label{z3.42}
\mathbf{u}\cdot\nabla(\mathcal{P}\mathbf{u})\cdot\mathbf{n}=(\mathcal{P}\mathbf{u})^{i}(\mathcal{P}\mathbf{u})_{i}^{j}n^j
+(\mathcal{Q}\mathbf{u})^{i}(\mathcal{P}\mathbf{u})_{i}^{j}n^j=0 \ \ \text{on}\ \partial\mathbb{R}^2_+,
\end{align}
we thus infer from Lemma $\ref{E0}$ and the divergence theorem that
\begin{align}\label{z3.43}
\tilde{B}_{1}&=\frac{\mu+\lambda}{2\mu+\lambda}\int_{\partial\mathbb{R}^2_+}\sigma' F(\mathcal{ P}\mathbf{u})^{i}(\mathcal{P}\mathbf{u})_{i}^{j}n^j\mathrm{ds}=-\frac{\mu+\lambda}{2\mu+\lambda}\int_{\partial\mathbb{R}^2_+}\sigma' F(\mathcal{ Q}\mathbf{u})^{i}(\mathcal{P}\mathbf{u})_{i}^{j}n^j\mathrm{ds}\notag\\
&=-\frac{\mu+\lambda}{2\mu+\lambda}\int\sigma'\big(\mathcal{Q}\mathbf{u}\cdot\nabla\mathcal{P}\mathbf{u}\cdot\nabla F+F(\mathcal{Q}\mathbf{u})^{i}_j(\mathcal{P}\mathbf{u})_{i}^{j}\big)\mathrm{d}\mathbf{x}\notag\\
&\leq C\|\nabla F\|_{L^2}\|\mathbf{u}\|_{L^4}\|\nabla\mathcal{P}\mathbf{u}\|_{L^4}
+\frac{C(\mu+\lambda)}{(2\mu+\lambda)^2}\big(\|F\|_{L^4}^2+\|P(\rho)-P(\tilde{\rho})\|_{L^4}^2\big)\|\nabla\mathcal{P}\mathbf{u}\|_{L^2}\notag\\
&\leq C\Big(1+C_0^\frac12\Big)\big(1+\|\nabla\mathbf{u}\|_{L^{2}}^{2}\big)
\big(\|\sqrt{\rho}\dot{\mathbf{u}}\|_{L^{2}}^{2}+\|\nabla\mathbf{u}\|_{L^{2}}^{2}\big)
+\frac{C}{(2\mu+\lambda)^2}\|P(\rho)-P(\tilde{\rho})\|_{L^4}^4,\\
\tilde{B}_{2}+\tilde{B}_{3}
&=-\frac{2(\mu+\lambda)}{2\mu+\lambda}\int_{\partial\mathbb{R}^2_+}\sigma F(\mathcal{Q}\mathbf{u})^{i}(\mathcal{P}\mathbf{u})_{it}^{j}n^j\mathrm{ds}\notag\\
&=-\frac{2(\mu+\lambda)}{2\mu+\lambda}\int\sigma\big[\mathcal{Q}\mathbf{u}\cdot\nabla\mathcal{P}(\dot{\mathbf{u}}-\mathbf{u}\cdot\nabla\mathbf{u})\cdot\nabla F+F(\mathcal{Q}\mathbf{u})^{i}_j\big(\mathcal{P}(\dot{\mathbf{u}}-\mathbf{u}\cdot\nabla\mathbf{u})\big)_{i}^{j}\big]\mathrm{d}\mathbf{x}\notag\\
&\leq C\sigma\|\nabla F\|_{L^2}\|\mathcal{Q}\mathbf{u}\|_{L^\infty}
\big(\|\nabla \dot{\mathbf{u}}\|_{L^2}+\|\nabla\mathbf{u}\|_{L^4}^2
+\|\mathbf{u}\|_{L^\infty}\|\nabla^2\mathcal{P}\mathbf{u}\|_{L^2}\big)
\notag\\
&\quad+\frac{C\sigma(\mu+\lambda)}{(2\mu+\lambda)^2}\big(\|F\|_{L^4}^2+\|P(\rho)-P(\tilde{\rho})\|_{L^4}^2\big)
\big(\|\nabla\dot{\mathbf{u}}\|_{L^2}+\|\mathbf{u}\|_{L^4}\|\nabla\mathbf{u}\|_{L^4}\big)\notag\\
&\leq \frac{\mu\sigma}{16}\|\nabla\dot{\mathbf{u}}\|_{L^2}^2
+C\sigma\big(1+C_0+\|\nabla\mathbf{u}\|_{L^{2}}^{2}\big)
\big(1+\|\sqrt{\rho}\dot{\mathbf{u}}\|_{L^{2}}^{2}+\|\nabla\mathbf{u}\|_{L^{2}}^{2}\big)
\big(\|\sqrt{\rho}\dot{\mathbf{u}}\|_{L^{2}}^{2}+\|\nabla\mathbf{u}\|_{L^{2}}^{2}\big)\notag\\
&\quad+\frac{C}{(2\mu+\lambda)^2}\|P-P(\tilde{\rho})\|_{L^4}^4.\label{z3.44}
\end{align}

Furthermore, using integration by parts and performing similar arguments, we get that
\begin{align}\label{z3.45}
J_{33}&=-\sigma(\mu+\lambda)\int(\divv\mathbf{u})^2\nabla\mathbf{u}:\nabla\mathbf{u}\mathrm{d}\mathbf{x}\notag\\
&\leq C\sigma(\mu+\lambda)\int(\divv\mathbf{u})^2\big(|\nabla\mathcal{P}\mathbf{u}|^2+|\nabla\mathcal{Q}\mathbf{u}|^2\big)\mathrm{d}\mathbf{x}\notag\\
&\leq C\sigma(\mu+\lambda)^2\|\divv\mathbf{u}\|_{L^4}^4+C\sigma\|\nabla\mathcal{P}\mathbf{u}\|_{L^4}^4\notag\\
&\leq\frac{C\sigma(\mu+\lambda)^2}{(2\mu+\lambda)^4}\|F+P-P(\tilde{\rho})\|_{L^4}^4
+C\sigma\big(\|\sqrt{\rho}\dot{\mathbf{u}}\|_{L^{2}}^{2}\|\nabla\mathbf{u}\|_{L^{2}}^{2}
+\|\nabla\mathbf{u}\|_{L^{2}}^{4}\big)\notag\\
&\leq C\sigma \big(\|\sqrt{\rho}\dot{\mathbf{u}}\|_{L^2}^2+\|\nabla\mathbf{u}\|_{L^2}^2+C_0\big)
\big(\|\sqrt{\rho}\dot{\mathbf{u}}\|_{L^2}^2+\|\nabla\mathbf{u}\|_{L^2}^2\big)
+\frac{C}{(2\mu+\lambda)^2}\|P-P(\tilde{\rho})\|_{L^4}^4,\\
J_{34}&=\frac{\sigma(\mu+\lambda)}{2\mu+\lambda}\int\partial_j\mathbf{u}\cdot\nabla\dot{u}^j(F+P-P(\tilde{\rho}))\mathrm{d}\mathbf{x}\notag\\
&=\frac{\sigma(\mu+\lambda)}{2\mu+\lambda}\int\big(\mathcal{P}\mathbf{u}+\mathcal{Q}\mathbf{u}\big)_j\cdot\nabla\dot{u}^jF\mathrm{d}\mathbf{x}
+\frac{\sigma(\mu+\lambda)}{2\mu+\lambda}\int\partial_j\mathbf{u}\cdot\nabla\dot{u}^j(P-P(\tilde{\rho}))\mathrm{d}\mathbf{x}\notag\\
&=-\frac{\sigma(\mu+\lambda)}{2\mu+\lambda}\int(\mathcal{P}\mathbf{u})_j\cdot\nabla F\dot{u}^j\mathrm{d}\mathbf{x}
+\frac{\sigma(\mu+\lambda)}{2\mu+\lambda}\int\partial_j\mathbf{u}\cdot\nabla\dot{u}^j(P-P(\tilde{\rho}))\mathrm{d}\mathbf{x}\notag\\
&\quad+\frac{\sigma(\mu+\lambda)}{2\mu+\lambda}\int(\mathcal{Q}\mathbf{u})_j\cdot\nabla\dot{u}^j F\mathrm{d}\mathbf{x}\notag\\
&\leq C\sigma\|\nabla F\|_{L^2}\|\nabla\mathcal{P}\mathbf{u}\|_{L^4}\|\dot{\mathbf{u}}\|_{L^4}
+C\sigma\|\nabla\mathbf{u}\|_{L^2}\|\nabla\dot{\mathbf{u}}\|_{L^2}\|P-P(\tilde{\rho})\|_{L^\infty}\notag\\
&\quad+\frac{C\sigma(\mu+\lambda)}{(2\mu+\lambda)^2}\|\nabla\dot{\mathbf{u}}\|_{L^2}\big(\|F\|_{L^4}^2+\|P(\rho)-P(\tilde{\rho})\|_{L^4}^2\big)\notag\\
&\leq \frac{\mu\sigma}{16}\|\nabla\dot{\mathbf{u}}\|_{L^2}^2
+C\sigma\big(1+C_0+\|\nabla\mathbf{u}\|_{L^{2}}^{2}\big)
\big(\|\sqrt{\rho}\dot{\mathbf{u}}\|_{L^{2}}^{2}+\|\nabla\mathbf{u}\|_{L^{2}}^{2}\big)
\big(1+\|\sqrt{\rho}\dot{\mathbf{u}}\|_{L^{2}}^{2}\big)\notag\\
&\quad+\frac{C}{(2\mu+\lambda)^2}\|P-P(\tilde{\rho})\|_{L^4}^4.\label{z3.46}
\end{align}

Hence, substituting \eqref{z3.31}, \eqref{z3.32}--\eqref{z3.39}, and \eqref{z3.43}--\eqref{z3.46} into \eqref{z3.30}, one derives from \eqref{z3.7} that
\begin{align}\label{z3.47}
&\frac{\mathrm{d}}{\mathrm{d}t}\bigg(\frac{1}{2}\int\sigma\rho|\dot{\mathbf{u}}|^2\mathrm{d}\mathbf{x}
-\frac{\mu+\lambda}{2\mu+\lambda}\int\sigma F_j
(\mathcal{P}\mathbf{u})^{i}(\mathcal{P}\mathbf{u})_{i}^{j}\mathrm{d}\mathbf{x}
+\frac{\mu+\lambda}{2\mu+\lambda}\int_{\partial\mathbb{R}^2_+}\sigma F
(\mathcal{P}\mathbf{u})^{i}(\mathcal{P}\mathbf{u})_{i}^{j}n^j\mathrm{ds}\bigg)
\notag\\&\quad
+\frac{\mu\sigma}{2}\|\nabla\dot{\mathbf{u}}\|_{L^2}^2
+\frac{(\mu+\lambda)\sigma}{2}\|\divf\mathbf{u}\|_{L^2}^2\notag\\
&\leq(2+M)^{e^{3D_2(1+C_0)^5}}\big(1+\sigma\|\sqrt{\rho}\dot{\mathbf{u}}\|_{L^{2}}^{2}
+\|\nabla\mathbf{u}\|_{L^{2}}^{2}\big)
\big(\|\sqrt{\rho}\dot{\mathbf{u}}\|_{L^{2}}^{2}+\|\nabla\mathbf{u}\|_{L^{2}}^{2}\big)
+\frac{C}{(2\mu+\lambda)^2}\|P-P(\tilde{\rho})\|_{L^4}^4,
\end{align}
where we observe that
\begin{align*}
&\bigg|\int\sigma F_j
(\mathcal{P}\mathbf{u})^{i}(\mathcal{P}\mathbf{u})_{i}^{j}\mathrm{d}\mathbf{x}\bigg|
+\bigg|\int_{\partial\mathbb{R}^2_+}\sigma F
(\mathcal{P}\mathbf{u})^{i}(\mathcal{P}\mathbf{u})_{i}^{j}n^j\mathrm{ds}\bigg|\notag\\
&=\bigg|\int\sigma F_j
(\mathcal{P}\mathbf{u})^{i}(\mathcal{P}\mathbf{u})_{i}^{j}\mathrm{d}\mathbf{x}\bigg|
+\bigg|\int_{\partial\mathbb{R}^2_+}\sigma F
(\mathcal{Q}\mathbf{u})^{i}(\mathcal{P}\mathbf{u})_{i}^{j}n^j\mathrm{ds}\bigg|\notag\\
&\leq C\sigma\|\nabla F\|_{L^2}\|\mathbf{u}\|_{L^4}\|\nabla\mathcal{P}\mathbf{u}\|_{L^4}
+\frac{C\sigma}{2\mu+\lambda}\big(\|F\|_{L^4}^2+\|P(\rho)-P(\tilde{\rho})\|_{L^4}^2\big)\|\nabla\mathcal{P}\mathbf{u}\|_{L^2}\notag\\
&\leq
\frac{\sigma}{4}\|\sqrt{\rho}\dot{\mathbf{u}}\|_{L^{2}}^2
+C\sigma\big(1+C_0^2\big)\|\nabla\mathbf{u}\|_{L^{2}}^2
\big(1+\|\nabla\mathbf{u}\|_{L^{2}}^2\big)^2.
\end{align*}
Now we define an auxiliary functional $B_2(t)$ as
\begin{align*}
  B_2(t)&=\int\frac{\sigma}{2}\rho|\dot{\mathbf{u}}|^2\mathrm{d}\mathbf{x}
-\frac{\mu+\lambda}{2\mu+\lambda}\int\sigma F_j
(\mathcal{P}\mathbf{u})^{i}(\mathcal{P}\mathbf{u})_{i}^{j}\mathrm{d}\mathbf{x}
\notag\\
&\quad+\frac{\mu+\lambda}{2\mu+\lambda}\int_{\partial\mathbb{R}^2_+}\sigma F
(\mathcal{P}\mathbf{u})^{i}(\mathcal{P}\mathbf{u})_{i}^{j}n^j\mathrm{ds}+(2+M)^{e^{3D_2(1+C_0)^5}}B_1(t),
\end{align*}
where, by the definition of $B_1(t)$ in \eqref{z3.20}, one sees that
\begin{equation}\label{z3.48}
B_2(t)\thicksim \sigma\|\sqrt{\rho}\dot{\mathbf{u}}\|_{L^{2}}^2+B_1(t).
\end{equation}

Setting
\begin{equation*}
  f_2(t)\triangleq 2+B_2(t),~~g_2(t)\triangleq(2+M)^{e^{3D_2(1+C_0)^5}}\bigg(\|\sqrt{\rho}\dot{\mathbf{u}}\|_{L^{2}}^2+\|\nabla\mathbf{u}\|_{L^{2}}^2
  +\frac{1}{(2\mu+\lambda)^2}\|P-P(\tilde{\rho})\|_{L^4}^4\bigg)
\end{equation*}
and taking the summation
\begin{align*}
(2+M)^{e^{\frac{13}{4}D_2(1+C_0)^5}}\times\big(\eqref{z3.6}+\eqref{z3.19}\big)+\eqref{z3.47},
\end{align*}
we obtain from \eqref{z3.48} that
\begin{align}\label{z3.49}
f'_2(t)\leq g_2(t)f_2(t)
\end{align}
provided that $\lambda$ satisfies \eqref{lam} with $D\geq 3D_2$.
Thus, by Gronwall's inequality and Lemmas \ref{l3.1}\text{--}\ref{l3.3}, one has that
\begin{equation}\label{z3.50}
  \sup_{0\leq t\leq T}\left(\sigma\|\sqrt{\rho}\dot{\mathbf{u}}\|_{L^{2}}^2\right)\leq
  \exp\bigg\{(2+M)^{e^{\frac{7}{2}D_2(1+C_0)^5}}\bigg\}.
\end{equation}
Integrating \eqref{z3.47} with respect to $t$ over $(0,T)$, one infers from \eqref{z3.50}, Lemmas $\ref{E0}$, and $\ref{l3.1}\text{--}\ref{l3.3}$ that
\begin{align*}
&\int_0^T\big[\mu\sigma\|\nabla\dot{\mathbf{u}}\|_{L^2}^2
+(\mu+\lambda)\sigma\|\divf\mathbf{u}\|_{L^2}^2\big]\mathrm{d}t\notag\\
&\leq CM+C(1+C_0)^3(2+M)^{e^{\frac{7}{2}D_2(1+C_0)^5}}
\exp\bigg\{(2+M)^{e^{\frac{7}{2}D_2(1+C_0)^5}}\bigg\}\notag\\
&\leq\exp\bigg\{(2+M)^{e^{\frac{15}{4}D_2(1+C_0)^5}}\bigg\},
\end{align*}
which along with \eqref{z3.50} leads to the desired \eqref{z3.29}.
\end{proof}

Finally, inspired by \cite{DE97}, we derive the upper bound of density.
\begin{lemma}\label{l3.5}
Under the assumption \eqref{z3.1}, it holds that
\begin{align*}
0\leq\rho(\mathbf{x},t)\leq\frac{7}{4}\hat{\rho}~\textit{a.e.}~\mathrm{on}~\mathbb{R}^2_+\times[0,T]
\end{align*}
provided that $\lambda$ satisfies \eqref{lam} with $D\geq 5D_2$.
\end{lemma}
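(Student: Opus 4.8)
The plan is to follow the density along particle paths and exploit the pressure-induced damping hidden in the effective viscous flux. Fix $\mathbf{x}_0\in\overline{\mathbb{R}^2_+}$ and let $\mathbf{x}(t)$ solve $\dot{\mathbf{x}}(t)=\mathbf{u}(\mathbf{x}(t),t)$ with $\mathbf{x}(0)=\mathbf{x}_0$; by the boundary condition \eqref{a3} the half-plane is invariant, so $\mathbf{x}(t)\in\overline{\mathbb{R}^2_+}$ and $Y(t)\triangleq\rho(\mathbf{x}(t),t)$ is well defined and $C^1$. From $\eqref{a1}_1$ and the identity $\divv\mathbf{u}=\frac{1}{2\mu+\lambda}\big(F+P(\rho)-P(\tilde{\rho})\big)$ (see \eqref{z1.5}) one obtains
\[
\dot{Y}(t)=-\frac{Y(t)}{2\mu+\lambda}F(\mathbf{x}(t),t)-\frac{a}{2\mu+\lambda}Y(t)\big(Y(t)^\gamma-\tilde{\rho}^\gamma\big),\qquad Y(0)=\rho_0(\mathbf{x}_0).
\]
Since $\dot{Y}=-Y\divv\mathbf{u}$ keeps $Y\ge0$, only the upper bound is at issue; note also $\tilde{\rho}\le\hat{\rho}$ by \eqref{z1.8}--\eqref{a4}. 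I would split $[0,T]$ into the initial layer $[0,\sigma(T)]$, on which $\sigma(t)=t$, and the remainder $[\sigma(T),T]$ (nonempty only when $T>1$, on which $\sigma\equiv1$), and estimate $Y$ on each piece.

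On $[0,\sigma(T)]$ I would discard the sign-favorable pressure term and apply Gr\"onwall's inequality, getting $Y(t)\le\hat{\rho}\exp\!\big(\frac{1}{2\mu+\lambda}\int_0^t(\|F(\cdot,s)\|_{L^\infty}+a\tilde{\rho}^\gamma)\,\mathrm{d}s\big)$; the task is to make $\frac{1}{2\mu+\lambda}\int_0^{\sigma(T)}\|F\|_{L^\infty}\,\mathrm{d}s$ arbitrarily small. I would use the Gagliardo--Nirenberg bound $\|F\|_{L^\infty}\le C\|F\|_{L^4}^{1/2}\|\nabla F\|_{L^4}^{1/2}$ (Lemma~\ref{GN}), then Lemma~\ref{E0} with $p=4$ to replace these by norms of $\rho\dot{\mathbf{u}}$, $\nabla\mathbf{u}$, $P-P(\tilde{\rho})$, next \eqref{z3.40} for $\|\rho\dot{\mathbf{u}}\|_{L^4}$, and finally a H\"older inequality in time against a weight $s^{-\theta}$ with $\theta$ small, so as to convert integrals such as $\int_0^{\sigma(T)}\|\nabla\dot{\mathbf{u}}\|_{L^2}^{1/4}\,\mathrm{d}s$ into the $\sigma$-weighted quantities of Lemma~\ref{l3.4}; since $[0,\sigma(T)]$ has length at most $1$, no growth in $T$ appears. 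The outcome is $\int_0^{\sigma(T)}\|F\|_{L^\infty}\,\mathrm{d}s\le (2\mu+\lambda)^{1/4}\exp\{(2+M)^{\mathrm{e}^{D'(1+C_0)^5}}\}$ with $D'$ independent of $\lambda$; dividing by $2\mu+\lambda$ and choosing $D>D'$ in \eqref{lam} makes the exponent $\le\ln\frac{7}{6}$, so $Y(t)\le\frac{7}{6}\hat{\rho}$ on all of $[0,\sigma(T)]$. If $T\le1$ this already proves the lemma.

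On $[\sigma(T),T]$ I would run an ODE comparison argument of Zlotnik/Desjardins type, writing $\dot{Y}=h(t)+b(Y)$ with $b(\zeta)=-\frac{a}{2\mu+\lambda}\zeta(\zeta^\gamma-\tilde{\rho}^\gamma)$ and $|h(t)|\le\frac{2\hat{\rho}}{2\mu+\lambda}\|F(\cdot,t)\|_{L^\infty}$ (using $\rho\le2\hat{\rho}$ from \eqref{z3.1}). With $M_1=\frac32\hat{\rho}$ one has $b(\zeta)\le-N_1$ for $\zeta\ge M_1$, where $N_1=\frac{a}{2\mu+\lambda}\cdot\frac32\hat{\rho}\big((\tfrac32)^\gamma-1\big)\hat{\rho}^\gamma$, and the comparison lemma yields $Y(t)\le\max\{M_1,Y(\sigma(T))\}+N_0=\frac32\hat{\rho}+N_0$ provided $\int_{t_1}^{t_2}|h|\,\mathrm{d}s\le N_0+N_1(t_2-t_1)$ for all $\sigma(T)\le t_1\le t_2\le T$. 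Splitting $|h|\le N_1+|h|^2/N_1$, it suffices to have $\frac1{N_1}\int_{\sigma(T)}^T|h|^2\,\mathrm{d}s\le N_0$ with $N_0\le\frac14\hat{\rho}$, i.e. $\int_{\sigma(T)}^T\|F\|_{L^\infty}^2\,\mathrm{d}s\le C(a,\gamma)\hat{\rho}^\gamma(2\mu+\lambda)$; this would give $Y\le\frac32\hat{\rho}+\frac14\hat{\rho}=\frac74\hat{\rho}$, the claim. Since $\sigma\equiv1$ on $[\sigma(T),T]$, the $\sigma$-weighted estimate of Lemma~\ref{l3.4} is an unweighted bound for $\int_{\sigma(T)}^T\|\nabla\dot{\mathbf{u}}\|_{L^2}^2\,\mathrm{d}s$; feeding this, the bounds $\int_0^T\mu\|\nabla\mathbf{u}\|_{L^2}^2\,\mathrm{d}t\le C_0$ and $\int_0^T\|\sqrt{\rho}\dot{\mathbf{u}}\|_{L^2}^2\,\mathrm{d}t<\infty$ of Lemmas~\ref{l3.1}--\ref{l3.2}, the $L^1_t$-pressure bound of Lemma~\ref{l3.3}, and Lemma~\ref{E0}, into $\|F\|_{L^\infty}^2\le C\|F\|_{L^4}\|\nabla F\|_{L^4}$ gives $\int_{\sigma(T)}^T\|F\|_{L^\infty}^2\,\mathrm{d}s\le(2\mu+\lambda)\exp\{(2+M)^{\mathrm{e}^{D'(1+C_0)^5}}\}$ uniformly in $T$; enlarging $D$ once more in \eqref{lam} yields the required inequality. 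As the strong solution is continuous, the pointwise bound holds, in particular a.e.

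The step I expect to be the main obstacle is the uniform-in-$T$ control of $\int_{\sigma(T)}^T\|F\|_{L^\infty}^2\,\mathrm{d}s$. A naive estimate through $\|F\|_{L^2}$, $\|\nabla\mathbf{u}\|_{L^4}$ and $\|P-P(\tilde{\rho})\|_{L^2}$ integrated over $[0,T]$ produces several contributions growing in $T$; the point is to arrange the interpolations so that every surviving $T$-growing term carries a strictly negative power of $2\mu+\lambda$, to be absorbed by \eqref{lam}, while the genuinely $T$-uniform control of the pressure in $L^1_t$ (Lemma~\ref{l3.3}) together with the dissipation estimates of Lemmas~\ref{l3.1}--\ref{l3.4} supplies the rest. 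A secondary, bookkeeping, difficulty is to track the tower of exponential constants so that a single choice of $D$ in \eqref{lam} serves simultaneously for the initial layer and the main part.
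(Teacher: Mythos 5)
Your overall strategy --- an initial-layer Gr\"onwall estimate on $[0,\sigma(T)]$ followed by a Zlotnik-type comparison on $[\sigma(T),T]$ --- is a legitimate alternative to the paper's contradiction argument (which propagates the density along a particle path between the barriers $\tfrac32\hat\rho$ and $\tfrac74\hat\rho$ and derives a contradiction from \eqref{z3.51} and \eqref{z3.53}). Your Case~1 is in essence the same as the paper's Case~1, and the estimate $\frac{1}{2\mu+\lambda}\int_0^{\sigma(T)}\|F\|_{L^\infty}\,\mathrm{d}t\le(2\mu+\lambda)^{\alpha-1}\exp\{\cdots\}$ with $\alpha<1$ is exactly what \eqref{z3.52} gives.

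Case~2, however, has a fatal quantitative gap. Your Young split $|h|\le N_1+|h|^2/N_1$ makes the key hypothesis of the comparison lemma read $\frac{1}{N_1}\int_{\sigma(T)}^{T}|h|^2\,\mathrm{d}s\le N_0$; since $N_1\sim\frac{C(a,\gamma,\hat\rho)}{2\mu+\lambda}$ and $|h|^2\lesssim\frac{\hat\rho^2}{(2\mu+\lambda)^2}\|F\|_{L^\infty}^2$, this becomes the requirement
\[
\frac{C}{2\mu+\lambda}\int_{\sigma(T)}^{T}\|F\|_{L^\infty}^2\,\mathrm{d}s\le\tfrac14\hat\rho .
\]
But the bound you claim, $\int_{\sigma(T)}^{T}\|F\|_{L^\infty}^2\le(2\mu+\lambda)\exp\{(2+M)^{\mathrm{e}^{D'(1+C_0)^5}}\}$, has exponent exactly $1$ in $(2\mu+\lambda)$, so after dividing by $2\mu+\lambda$ nothing remains that can be made small by enlarging $D$ in \eqref{lam} --- the constant $\exp\{(2+M)^{\cdots}\}$ is fixed by the data. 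Worse, even this bound is not obtainable uniformly in $T$: the only $T$-uniform budget available is $\int\|\nabla F\|_{L^4}^2\,\mathrm{d}t<\infty$, and Gagliardo--Nirenberg gives $\|F\|_{L^\infty}^2\le C\|F\|_{L^2}^{2/3}\|\nabla F\|_{L^4}^{4/3}$, whose time-integral then involves $\int\|\nabla F\|_{L^4}^{4/3}\,\mathrm{d}t\lesssim T^{1/3}\big(\int\|\nabla F\|_{L^4}^2\big)^{2/3}$, growing with $T$; no choice of $(q,r)$ in Lemma~\ref{GN} makes the exponent of $\|\nabla F\|_{L^r}$ equal to $2$ while leaving a positive power of $\|F\|_{L^2}$.

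The repair is to match the Young split to the cubic strength of the pressure damping, as the paper does. Multiplying the ODE by $|\rho-\tilde\rho|(\rho-\tilde\rho)$ (see \eqref{z3.53}), or equivalently replacing your split by $|h|\le N_1+\tfrac{C}{N_1^2}|h|^3$, turns the requirement into $\frac{1}{2\mu+\lambda}\int_{\sigma(T)}^{T}\|F\|_{L^\infty}^3\,\mathrm{d}s\le c(a,\gamma,\hat\rho)$. The cube is exactly what the dissipation budget accommodates: $\|F\|_{L^\infty}^3\le C\|F\|_{L^2}\|\nabla F\|_{L^4}^2$ has the full $\|\nabla F\|_{L^4}^2$ which is integrable uniformly in $T$ (via Lemmas \ref{E0}, \ref{l3.2}, \ref{l3.4}), and $\sup_t\|F\|_{L^2}\lesssim(2\mu+\lambda)^{1/2}(2+M)^{\cdots}$ carries only half a power of $(2\mu+\lambda)$, yielding $\frac{1}{2\mu+\lambda}\int\|F\|_{L^\infty}^3\lesssim(2\mu+\lambda)^{-1/2}\exp\{\cdots\}$, which \eqref{lam} makes arbitrarily small. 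With this change, your Zlotnik formulation becomes a valid alternative presentation of the paper's Case~2.

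A minor point: in your Case~1 Gr\"onwall bound you should also note that the extra constant term $\frac{a\tilde\rho^\gamma}{2\mu+\lambda}\sigma(T)\le\frac{a\tilde\rho^\gamma}{2\mu+\lambda}$ in the exponent is harmlessly small for $\lambda$ as in \eqref{lam}; you implicitly use this but it is worth stating.
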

\begin{proof}
Let $\mathbf{y}\in\mathbb{R}^2_+$ and define the corresponding particle path $\mathbf{x}(t)$ by
\begin{align*}
\begin{cases}
\mathbf{\dot{x}}(t,\mathbf{y})=\mathbf{u}(\mathbf{x}(t,\mathbf{y}),\mathbf{y)},\\
\mathbf{\dot{x}}(t_0,\mathbf{y})=\mathbf{y}.
\end{cases}
\end{align*}
Assume that there exists $t_1\leq T$ satisfying $\rho(\mathbf{x}(t_1), t_1) = \frac{7}{4}\hat{\rho}$, we take a minimal value of $t_1$ and then choose a maximal value of $t_0<t_1$ such that $\rho(\mathbf{x}(t_0), t_0)=\frac{3}{2}\hat{\rho}$. Thus, $\rho(\mathbf{x}(t),t)\in[\frac{3}{2}\hat{\rho},\frac{7}{4}\hat{\rho}]$ for $t\in[t_0,t_1]$. We divide
the argument into two cases.

\textbf{Case 1:} If $t_0<t_1\leq1$, one derives from $\eqref{a1}_1$ and \eqref{z1.5} that
\begin{equation*}
(2\mu+\lambda)\frac{\mathrm{d}}{\mathrm{d}t}\ln\rho(\mathbf{x}(t),t)
+P(\rho(\mathbf{x}(t),t))-P(\tilde{\rho})=-F(\mathbf{x}(t),t),
\end{equation*}
where $\frac{\mathrm{d}\rho}{\mathrm{d}t}\triangleq \rho_t+\mathbf{u}\cdot\nabla\rho$. Integrating the above equality from $t_0$ to $t_1$ and abbreviating $\rho(\mathbf{x},t)$ by $\rho(t)$ for
convenience, one gets that
\begin{equation}\label{z3.51}
\ln\rho(\tau)\big|_{t_0}^{t_1}+\frac{1}{2\mu+\lambda}\int_{t_0}^{t_1}\left(P(\rho(\tau))
-P(\tilde{\rho})\right)\mathrm{d}\tau=-\frac{1}{2\mu+\lambda}\int_{t_0}^{t_1}F(\mathbf{x}(\tau),\tau)\mathrm{d}\tau.
\end{equation}
It follows from \eqref{z3.36}, Lemmas $\ref{GN}$, $\ref{E0}$, and $\ref{l3.1}\text{--}\ref{l3.4}$ that
\begin{align}\label{z3.52}
&\int_0^{\sigma(T)}\left\|F(\cdot,t)\right\|_{L^\infty}\mathrm{d}t\leq C\int_0^{\sigma(T)}\left\|F\right\|_{L^2}^{\frac13}\left\|\nabla F\right\|_{L^4}^{\frac23}\mathrm{d}t\notag\\
&\leq C\int_0^{\sigma(T)}\Big((2\mu+\lambda)^{\frac13}
\|\divv\mathbf{u}\|_{L^2}^{\frac13}+\|P-P(\tilde{\rho})\|_{L^2}^{\frac13}\Big)
\Big(\|\sqrt{\rho}\dot{\mathbf{u}}\|_{L^4}^{\frac23}+\|\nabla\mathbf{u}\|_{L^4}^{\frac23}\Big)
\mathrm{d}t\notag\\
&\leq C\sup_{0\leq t\leq T}\Big((2\mu+\lambda)^{\frac13}\|\divv\mathbf{u}\|_{L^2}^{\frac13}
+\|P-P(\tilde{\rho})\|_{L^2}^{\frac13}\Big)
\int_{0}^{\sigma(T)}
\|\dot{\mathbf{u}}\|_{L^{2}}^{\frac{1}{3}}
\|\nabla\dot{\mathbf{u}}\|_{L^{2}}^{\frac{1}{3}}
 \mathrm{d}t\notag\\
&\leq C\Big((2\mu+\lambda)^{\frac{1}{6}}(2+M)^{\frac{1}{6}\mathrm{e}^{{2D_{2}
(1+C_{0})^{5}}}}+C_0^\frac16\Big)
\int_{0}^{\sigma(T)}
\Big(\|\sqrt{\rho}\dot{\mathbf{u}}\|_{L^{2}}^{\frac{1}{3}}\|\nabla\dot{\mathbf{u}}
\|_{L^{2}}^{\frac{1}{3}}+C_0^\frac16\|\nabla\dot{\mathbf{u}}
\|_{L^{2}}^{\frac{2}{3}}\Big)\mathrm{d}t\notag\\
&\leq(2\mu+\lambda)^{\frac16}(2+M)^{\frac12\mathrm{e}^{2D_2(1+C_0)^5}}\bigg[\int_0^{\sigma(T)}\|\sqrt{\rho}\dot{\mathbf{u}}\|_{L^2}^{\frac23}\mathrm{d}t
+\int_0^{\sigma(T)}(t\|\nabla\dot{\mathbf{u}}\|_{L^2}^2)^{\frac13}t^{-\frac13}\mathrm{d}t\bigg]\notag\\
&\leq(2\mu+\lambda)^{\frac{1}{6}}(2+M)^{\frac{1}{2}\mathrm{e}^{2D_{2}(1+C_{0})^{5}}} \Bigg[\bigg(\int_{0}^{\sigma(T)}\|\sqrt{\rho}\dot{\mathbf{u}}\|_{L^{2}}^{2}\mathrm{d}t\bigg)^{\frac{1}{3}}\bigg(\int_{0}^{\sigma(T)}1\mathrm{d}t\bigg)^{\frac{2}{3}}
\notag\\&\quad+\bigg(\int_0^{\sigma(T)}t\|\nabla\dot{\mathbf{u}}\|_{L^2}^2
\mathrm{d}t\bigg)^{\frac13}\bigg(\int_0^{\sigma(T)}
t^{-\frac12}\mathrm{d}t\bigg)^{\frac23}\Bigg]\notag\\
&\leq(2\mu+\lambda)^{\frac{1}{6}}\exp
\bigg\{(2+M)^{\mathrm{e}^{\frac{9}{2}D_{2}(1+C_{0})^{5}}}\bigg\}.
\end{align}
Note that $\rho(t)$ takes values in $[\frac{3}{2}\hat{\rho},\frac{7}{4}\hat{\rho}]\subset [\hat{\rho},2\hat{\rho}]$ and $P(\rho)$ is increasing on $[0,\infty)$. Substituting $\eqref{z3.52}$ into $\eqref{z3.51}$, we obtain that
\begin{equation*}
    \ln\bigg(\frac{7}{4}\hat{\rho}\bigg)-\ln\bigg(\frac{3}{2}\hat{\rho}\bigg)
    \leq\frac{1}{(2\mu+\lambda)^{\frac{5}{6}}}
    \exp\bigg\{(2+M)^{\mathrm{e}^{\frac{19}{4}D_{2}(1+C_{0})^{5}}}\bigg\},
\end{equation*}
which is impossible if $\lambda$ satisfies \eqref{lam} with $D\geq 5D_2$.
Therefore, we conclude that there is no time $t_1$ such that $\rho(\mathbf{x}(t_1), t_1) = \frac{7}{4}\hat{\rho}$. Since $\mathbf{y}\in\mathbb{R}^2_+$ is arbitrary, it follows that $\rho<\frac{7}{4}\hat{\rho}$ on $\mathbb{R}^2_+\times[0,T]$.

\textbf{Case 2:} If $t_1>1$, one gets from $\eqref{a1}_1$ and \eqref{z1.5} that
\begin{equation*}
    \frac{\mathrm{d}}{\mathrm{d}t}(\rho(t)-\tilde{\rho})+\frac{1}{2\mu+\lambda}\rho(t)(P(\rho(t))-P(\tilde{\rho}))=-\frac{1}{2\mu+\lambda}\rho(t)F(\mathbf{x}(t),t).
\end{equation*}
Multiplying the above equality by $|\rho(t)-\tilde{\rho}|(\rho(t)-\tilde{\rho})$, we have
\begin{equation}\label{z3.53}
    \frac{1}{3}\frac{\mathrm{d}}{\mathrm{d}t}\left|\rho(t)-\tilde{\rho}\right|^3+\frac{1}{2\mu+\lambda}\theta(t)\rho(t)|\rho(t)-\tilde{\rho}|^3
    =-\frac{1}{2\mu+\lambda}\rho(t)(\rho(t)-\tilde{\rho})|\rho(t)-\tilde{\rho}|F(\mathbf{x}(t),t),
\end{equation}
where
\begin{equation*}
  \theta(t)\triangleq\frac{P(\rho(t))-P(\tilde{\rho})}{\rho(t)-\tilde{\rho}}.
\end{equation*}
If $\rho(t)$ takes values in $[\frac{3}{2}\hat{\rho},\frac{7}{4}\hat{\rho}]$,
applying the mean value theorem to $\theta(t)$ and then integrating \eqref{z3.53} from $t_0$ to $t_1$, we obtain from Lemma $\ref{GN}$ and Young's inequality that
\begin{align*}
\hat{\rho}^{3}
&\leq\frac{C}{2\mu+\lambda}\int_{0}^{1}\|F(\cdot,t)\|_{L^\infty}\mathrm{d}t+
\frac{C}{2\mu+\lambda}\int_{1}^{T}\|F(\cdot,t)\|_{L^\infty}^3\mathrm{d}t\notag\\
&\leq\frac{1}{(2\mu+\lambda)^{\frac{5}{6}}}
\exp\bigg\{(2+M)^{\mathrm{e}^{\frac{9}{2}D_{2}(1+C_{0})^{5}}}\bigg\}
+\frac{C}{2\mu+\lambda}\int_{1}^{T}\|F(\cdot,t)\|_{L^{2}}\|\nabla F(\cdot,t)\|_{L^{4}}^{2}\mathrm{d}t\notag\\
&\leq\frac{1}{(2\mu+\lambda)^{\frac{5}{6}}}
\exp\bigg\{(2+M)^{\mathrm{e}^{\frac{9}{2}D_{2}(1+C_{0})^{5}}}\bigg\}
+\frac{(2+M)^{\mathrm{e}^{2D_2(1+C_0)^5}}}{(2\mu+\lambda)^{\frac{1}{2}}}\int_{0}^{T}
\|\nabla\dot{\mathbf{u}}\|_{L^2}\Big(\|\sqrt{\rho}\dot{\mathbf{u}}\|_{L^2}
+C_0^\frac12\|\nabla\dot{\mathbf{u}}\|_{L^2}\Big)\mathrm{d}t\notag\\
&\leq\frac{1}{(2\mu+\lambda)^{\frac{5}{6}}}
\exp\bigg\{(2+M)^{\mathrm{e}^{\frac{9}{2}D_{2}(1+C_{0})^{5}}}\bigg\}
+\frac{1}{(2\mu+\lambda)^{\frac{1}{2}}}(2+M)^{\mathrm{e}^{2D_2(1+C_0)^5}}
\exp\bigg\{(2+M)^{\mathrm{e}^{\frac{9}{2}D_{2}(1+C_{0})^{5}}}\bigg\}\notag\\
&\leq\frac{1}{(2\mu+\lambda)^{\frac12}}
\exp\bigg\{(2+M)^{\mathrm{e}^{\frac{19}{4}D_{2}(1+C_{0})^{5}}}\bigg\},
\end{align*}
which is impossible if $\lambda$ satisfies \eqref{lam} with $D\geq 5D_2$.
Hence we conclude that there is no time $t_1$ such that $\rho(\mathbf{x}(t_1), t_1) = \frac{7}{4}\hat{\rho}$. Since $\mathbf{y}\in\mathbb{R}^2_+$ is arbitrary, it follows that $\rho<\frac{7}{4}\hat{\rho}$ on $\mathbb{R}^2_+\times[0,T]$.
\end{proof}

Now we are ready to prove Proposition $\ref{p3.1}$.
\begin{proof}[Proof of Proposition \ref{p3.1}.]
Proposition \ref{p3.1} follows from Lemmas $\ref{l3.2}$--$\ref{l3.5}$ provided that $\lambda$ satisfies \eqref{lam} with $D\geq 5D_2$.
\end{proof}

\section{Proof of Theorem \ref{t1.1}}\label{sec4}
In this section we apply the \textit{a priori} estimates obtained in Section $\ref{sec3}$ to complete proof of Theorem \ref{t1.1}.
\begin{proof}[Proof of Theorem \ref{t1.1}.]
Let $(\rho_0, \mathbf{u}_0)$ be initial data as described in the theorem.
For $\varepsilon>0$, let $j_\varepsilon=j_\varepsilon(\mathbf{x})$ be the standard mollifier,
define the approximate initial data $(\rho_0^\varepsilon, \mathbf{u}_0^\varepsilon)$:
\begin{align*}
\rho_0^\varepsilon&=[J_\varepsilon\ast(\rho_0\mathbf{1}_{\mathbb{R}^2_+})]
\mathbf{1}_{\mathbb{R}^2_+}+\varepsilon,
\end{align*}
and $\mathbf{u}_0^\varepsilon$ is the unique smooth solution to the following elliptic equation
\begin{equation*}
\begin{cases}
\Delta \mathbf{u}_0^\varepsilon = \Delta(J_\varepsilon\ast\mathbf{u}_0), & \mathbf{x}\in \mathbb{R}^2_+, \\
(u_0^{\varepsilon,1},u_0^{\varepsilon,2})=(\partial_2u_0^{\varepsilon,1},0), & x_2=0.
\end{cases}
\end{equation*}
Then we have
\begin{align*}
(\rho_0^\varepsilon-\tilde{\rho})\in H^2,\ \
\inf_{\mathbf{x}\in\mathbb{R}^2_+}\{\rho_0^\varepsilon(\mathbf{x})\}\geq\varepsilon, \ \ \mathbf{u}_0^\varepsilon\in H^2\cap \widetilde{H}^1.
\end{align*}
By Proposition $\ref{p3.1}$, we see that, for $\varepsilon$ being suitably small,
\begin{align*}
0\leq\rho^\varepsilon(\mathbf{x},t)\leq\frac{7}{4}\hat{\rho}~\textit{a.e.}~\mathrm{on}~\mathbb{R}^2_+\times[0,T]
\end{align*}
provided that $\lambda$ satisfies \eqref{lam}. Thus, Lemma $\ref{l2.1}$ implies the global existence and uniqueness of strong solutions $(\rho^\varepsilon,\mathbf{u}^\varepsilon)$ to \eqref{a1} and \eqref{a3}--\eqref{a4} with the initial data $(\rho_0^\varepsilon,\mathbf{u}_0^\varepsilon)$.

Fix $\mathbf{x}\in\overline{\mathbb{R}^2_+}$ and let $B_R$ be a ball of radius $R$ centered at $\mathbf{x}$. Then, for $t\geq\tau>0$, one gets from Lemmas $\ref{E0}$, $\ref{l3.1}\text{--}\ref{l3.4}$, and Sobolev's inequality that
\begin{align}
\langle\mathbf{u}^\varepsilon(\cdot,t)
\rangle^{\frac12}_{\overline{\mathbb{R}^2_+}}&\leq C\big(1+\|\nabla\mathbf{u}^\varepsilon\|_{L^4}\big)\notag\\
&\leq C\|\nabla\mathbf{u}^\varepsilon\|_{L^2}^{\frac12}
\|\sqrt{\rho^\varepsilon}\dot{\mathbf{u}}^\varepsilon\|_{L^2}
^{\frac12}+\frac{C}{2\mu+\lambda}
\Big(\|\sqrt{\rho^\varepsilon}\dot{\mathbf{u}}^\varepsilon\|_{L^2}^{\frac12}+\|\nabla \mathbf{u}^\varepsilon\|_{L^2}^\frac12\Big)
\|P(\rho^\varepsilon)-P(\tilde{\rho}+\varepsilon)\|_{L^2}^\frac{1}{2}\notag\\
&\quad+\frac{C}{2\mu+\lambda}
\|P(\rho^\varepsilon)-P(\tilde{\rho}+\varepsilon)\|_{L^4}+C\|\nabla \mathbf{u}^\varepsilon\|_{L^2}+C\leq C(\tau).\notag
\end{align}
Noting that
\begin{align}
\bigg|\mathbf{u}^\varepsilon(\mathbf{x},t)-\frac{1}{|B_R\cap\mathbb{R}^2_+|}
\int_{B_R\cap\mathbb{R}^2_+}\mathbf{u}^\varepsilon(\mathbf{y},t)\mathrm{d}\mathbf{y}\bigg|
&=\bigg|\frac{1}{|B_R\cap\mathbb{R}^2_+|}
\int_{B_R\cap\mathbb{R}^2_+}\left(\mathbf{u}^\varepsilon(\mathbf{x},t)
-\mathbf{u}^\varepsilon(\mathbf{y},t)\right)\mathrm{d}\mathbf{y}\bigg|\notag\\
&\leq\frac{1}{|B_R\cap\mathbb{R}^2_+|}C(\tau)\int_{B_R\cap\mathbb{R}^2_+}
|\mathbf{x}-\mathbf{y}|^{\frac12}\mathrm{d}\mathbf{y}\leq C(\tau)R^{\frac12},\notag
\end{align}
one deduces that, for $0<\tau\leq t_1<t_2<\infty$,
\begin{align}
|\mathbf{u}^\varepsilon(\mathbf{x},t_2)-\mathbf{u}^\varepsilon(\mathbf{x},t_1)|
&\leq\frac{1}{|B_R\cap\mathbb{R}^2_+|}\int_{t_{1}}^{t_{2}}
\int_{B_R\cap\mathbb{R}^2_+}|\mathbf{u}_{t}^{\varepsilon}(\mathbf{y},t)
|\mathrm{d}\mathbf{y}\mathrm{d}t+C(\tau)R^{\frac12}\notag\\
&\leq CR^{-1}|t_{2}-t_{1}|^{\frac{1}{2}}\left(\int_{t_{1}}^{t_{2}}
\int\left|\mathbf{u}_{t}^{\varepsilon}(\mathbf{y},t)\right|^{2}
\mathrm{d}\mathbf{y}\mathrm{d}t\right)^{\frac{1}{2}}+C(\tau)R^{\frac12}\notag\\
&\leq CR^{-1}|t_{2}-t_{1}|^{\frac{1}{2}}\left(\int_{t_{1}}^{t_{2}}
\int\left(|\dot{\mathbf{u}}^{\varepsilon}|^{2}
+|\mathbf{u}^{\varepsilon}|^{2}|\nabla\mathbf{u}^{\varepsilon}|^{2}\right)
\mathrm{d}\mathbf{y}\mathrm{d}t\right)^{\frac{1}{2}}
+C(\tau)R^{\frac12}\notag\\
&\leq C(\tau)R^{-1}|t_{2}-t_{1}|^{\frac{1}{2}}+C(\tau)R^{\frac12},\notag
\end{align}
due to
\begin{align}
\int_{t_1}^{t_2}\int|\mathbf{u}^\varepsilon|^2
|\nabla\mathbf{u}^\varepsilon|^2\mathrm{d}\mathbf{x}\mathrm{d}t
&\leq C\sup_{t_1\leq t\leq t_2}\|\mathbf{u}^\varepsilon\|_{L^\infty}^2\int_{t_1}^{t_2}
\int|\nabla\mathbf{u}^\varepsilon|^2\mathrm{d}\mathbf{x}\mathrm{d}t\notag\\
&\leq C\sup_{t_1\leq t\leq t_2}\|\mathbf{u}^\varepsilon\|_{L^2}^{\frac{2}{3}}
\|\nabla\mathbf{u}^\varepsilon\|_{L^4}^{\frac{4}{3}}
\int_{t_1}^{t_2}\int|\nabla\mathbf{u}^\varepsilon|^2\mathrm{d}\mathbf{x}\mathrm{d}t\leq C(\tau).\notag
\end{align}
Choosing $R=|t_2-t_1|^{\frac13}$, one sees that
\begin{equation*}
|\mathbf{u}^\varepsilon(\mathbf{x},t_2)-\mathbf{u}^\varepsilon(\mathbf{x},t_1)|
\leq C(\tau)|t_{2}-t_{1}|^{\frac{1}{6}},~~\text{for}~~0<\tau\leq t_1<t_2<\infty,
\end{equation*}
which implies that $\{\mathbf{u}^\varepsilon\}$ is uniformly H\"older continuous away from $t=0$.

For any fixed $\tau$ and $T$ with $0<\tau<T<\infty$, it follows from Ascoli--Arzel\`{a} theorem that there is a subsequence $\varepsilon_k\rightarrow0$ satisfying
\begin{equation}\label{z4.1}
\mathbf{u}^{\varepsilon_k}\rightarrow \mathbf{u}~~\mathrm{uniformly}~\mathrm{on}~\mathrm{compact}~\mathrm{sets} ~\mathrm{in}~\mathbb{R}^2_+\times(0,\infty).
\end{equation}
Moreover, by the standard compactness arguments as in \cite{Hoff05,EF01,PL98}, we can extract a further subsequence $\varepsilon_{k'}\rightarrow0$ satisfying
\begin{equation}\label{z4.2}
  \rho^{\varepsilon_{k'}}-\tilde{\rho}\rightarrow\rho-\tilde{\rho}~~\mathrm{strongly}~\mathrm{in}~L^p(\mathbb{R}^2_+),~~\mathrm{for}~
  \mathrm{any}~p\in[2,\infty)~\mathrm{and}~t\geq0.
\end{equation}
Therefore, passing to the limit of $\varepsilon_{k'}\rightarrow0$, we deduce from \eqref{z4.1} and \eqref{z4.2} that the limit function $(\rho,\mathbf{u})$ is indeed a weak solution to the problem \eqref{a1}--\eqref{a4} in the sense of Definition $\ref{d1.1}$ and satisfies \eqref{reg}.
\end{proof}

\section{Proof of Theorem \ref{t1.2}}\label{sec5}
This section is devoted to the incompressible limit of \eqref{a1}--\eqref{a4} as the bulk viscosity tends to infinity.

\begin{proof}[Proof of Theorem \ref{t1.2}.]
Let $\{(\rho^\lambda,{\bf u}^\lambda)\}$ be the family of solutions to the problem \eqref{a1}--\eqref{a4} obtained in Theorem \ref{t1.1}. Applying \eqref{reg} and performing a similar argument as that in \eqref{z4.1}, then there is a subsequence $\{(\rho^{\lambda_{k}},{\bf u}^{\lambda_{k}})\}$ such that
\begin{gather}
{\bf u}^{\lambda_{k}}\rightarrow {\bf v}
~~\text{uniformly on compact sets  in}~\mathbb R^2_+\times(0,\infty),\notag\\
\rho^{\lambda_{k}}-\tilde\rho\rightarrow \varrho-\tilde\rho\ \    \text{weakly in}\ \ L^p(\mathbb R^2_+),\ \ \text{for any}\ p\in[2,\infty)\ \text{and}\ {t\ge 0},\label{5.1}\\
\rho^{\lambda_{k}}\rightarrow \varrho\ \  \text{weakly* in}\ \ L^\infty(\mathbb R^2_+),\ \ \text{for any}\ {t\ge 0},\notag\\
\divv{\bf u}^{\lambda_{k}}\rightarrow 0~~\text{strongly  in}~L^2(\mathbb R^2_+\times(0,\infty)).\notag
\end{gather}
Hence, we conclude that $\divv{\bf v}=0$ and $(\varrho,{\bf v})$ satisfies \eqref{1.16}--\eqref{1.17} for all $C^1$ test functions $(\phi,\boldsymbol\psi)$ just as in Definition \ref{d1.2},
with $\divv\boldsymbol\psi=0$ on $\mathbb R^2_+\times[0,\infty)$. Moreover, $(\varrho,{\bf v})$ has the following properties:
\begin{equation}\label{5.2}
 0\leq\varrho({\bf x},t)\leq 2 \hat\rho\ \ \text{a.e. on} \
\mathbb R^2_+\times[0,\infty),
\end{equation}
\begin{equation}\label{5.3}
\sup\limits_{t\ge 0}\big(\|\varrho-\tilde\rho\|_{L^2}^2+\|\sqrt{\varrho}{\bf v}\|_{L^2}^2+\|\nabla{\bf v}\|_{L^2}^2+\sigma\|\nabla^2{\bf v}\|_{L^2}^2\big)+\int_0^\infty\big(\mu\|\nabla{\bf v}\|_{L^2}^2+\|\nabla^2{\bf v}\|_{L^2}^2
\big)\mathrm{d}\tau\le C(C_0,M).
\end{equation}

It remains to show \eqref{1.13}, \eqref{1.14}, and \eqref{1.15}. From the mass equation $\eqref{a1}_1$, one sees that
\begin{equation*}
\partial_t(\rho^{\epsilon,\lambda}-\tilde\rho)^2+{\bf u}^{\epsilon,\lambda}\cdot\nabla(\rho^{\epsilon,\lambda}-\tilde\rho)^2
+2\rho^{\epsilon,\lambda}(\rho^{\epsilon,\lambda}-\tilde\rho)\divv{\bf u}^{\epsilon,\lambda}=0.
\end{equation*}
Integrating the above equality over $\mathbb R^2_+\times(0,t)$, we obtain that
\begin{align*}
\big|\|(\rho^{\epsilon,\lambda}-\tilde\rho)(\cdot,t)\|_{L^2}^2
-\|\rho_0^\epsilon-\tilde\rho\|_{L^2}^2\big|
& =\bigg|\int_0^t\int(\rho^{\epsilon,\lambda}-\tilde\rho)^2\divv{\bf u}^{\epsilon,\lambda}\mathrm{d}{\bf x}\mathrm{d}\tau-2\int_0^t\int\rho^{\epsilon,\lambda}(\rho^{\epsilon,\lambda}
-\tilde\rho)\divv{\bf u}^{\epsilon,\lambda}\mathrm{d}{\bf x}\mathrm{d}\tau\bigg|
\\ & \le C\bigg(\int_0^t\|\rho^{\epsilon,\lambda}-\tilde\rho\|_{L^4}^4
\mathrm{d}\tau\bigg)^\frac{1}{2}\bigg(\int_0^t\|\divv{\bf u}^{\epsilon,\lambda}\|_{L^2}^2\mathrm{d}\tau\bigg)^\frac{1}{2}
\\ & \quad +C\sup\limits_{t\ge 0}\|\rho^{\epsilon,\lambda}(\cdot,t)\|_{L^\infty} \bigg(\int_0^t\|\rho^{\epsilon,\lambda}-\tilde\rho\|_{L^2}^2
\mathrm{d}\tau\bigg)^\frac{1}{2}\bigg(\int_0^t\|\divv{\bf u}^{\epsilon,\lambda}\|_{L^2}^2\mathrm{d}\tau\bigg)^\frac{1}{2}
\\ & \le C(t)\lambda^{-\frac{1}{2}}.
\end{align*}
This together with \eqref{z4.2} implies that
\begin{align*}
\big|\|(\rho^{\lambda}-\tilde\rho)(\cdot,t)\|_{L^2}^2
-\|\rho_0-\tilde\rho\|_{L^2}^2\big|
=\lim\limits_{\epsilon_{k}\rightarrow 0}
\big|\|(\rho^{\epsilon_{k},\lambda}-\tilde\rho)(\cdot,t)\|_{L^2}^2
-\|\rho_0^{\epsilon_{k}}-\tilde\rho\|_{L^2}^2\big|
\le C(t)\lambda^{-\frac{1}{2}},
\end{align*}
which yields that
\begin{equation}\label{5.4}
\lim\limits_{\lambda\rightarrow\infty}\|(\rho^\lambda-\tilde\rho)(\cdot,t)\|_{L^2}
=\|\rho_0-\tilde\rho\|_{L^2},\ \ \text{for any}\ t\ge 0.
\end{equation}
Performing a similar argument as that in \eqref{z3.13}--\eqref{z3.14}, one gets that
\begin{align*}
\|\mathbf{v}\|_{H^1} \leq C(1+C_0)^\frac{1}{2}(1+\|\nabla\mathbf{v}\|_{L^2}),
\end{align*}
which along with \eqref{5.3} implies that
\begin{align*}
\int_0^T\int_{\mathbb R^2_+}|{\bf v}|^2\mathrm{d}{\bf x}\mathrm{d}t&\le C\int_0^T(1+C_0)\big(1+\|\nabla\mathbf{v}\|_{L^2}^2\big)\mathrm{d}t\le C(T),
\end{align*}
as the desired \eqref{1.15}.

Next, in view of \eqref{1.15} and \eqref{5.3}, using the mollifier $j_\epsilon$ as test functions in \eqref{1.16}, we see that, for any compact set $K\subset\mathbb R^2_+$,
\begin{equation}\nonumber\partial_t[\varrho]_\epsilon+{\bf v}\cdot\nabla [\varrho]_\epsilon=\divv\left([\varrho]_\epsilon {\bf v}\right)-\divv[\rho{\bf v}]_\epsilon~~ \text{a.e. on}\ K\times(0,\infty),
\end{equation}
and furthermore,
\begin{equation*}
\partial_t\left([\varrho]_\epsilon-\tilde\rho\right)^2
+{\bf v}\cdot\nabla \left([\varrho]_\epsilon-\tilde\rho\right)^2=2\left([\varrho]_\epsilon-\tilde\rho\right)\big(\divv\left([\varrho-\tilde\rho]_\epsilon {\bf v}\right)-\divv\left[(\varrho-\tilde\rho){\bf v}\right]_\epsilon\big)~~ \text{a.e. on}~  K\times(0,\infty).
\end{equation*}
Integrating the above equality over $K\times(0,t)$, we have that
\begin{align}\label{5.5}
&\big|\|([\varrho]_{\epsilon}-\tilde\rho)(\cdot,t)\|_{L^2(K)}^2- \|[\rho_0]_{\epsilon}-\tilde\rho\|_{L^2(K)}^2\big|\notag\\
 & \le C\sup\limits_{t\ge 0}\|[\varrho]_\epsilon-\tilde\rho\|_{L^\infty}\int_0^t
 \|\divv\left([\varrho-\tilde\rho]_\epsilon {\bf v}\right)-\divv\left[(\varrho-\tilde\rho){\bf v}\right]_\epsilon\|_{L^1(K)}\mathrm{d}\tau
\notag \\
& \le C\int_0^t\|\divv\left([\varrho-\tilde\rho]_\epsilon {\bf v}\right)-\divv\left[(\varrho-\tilde\rho){\bf v}\right]_\epsilon\|_{L^1(K)}\mathrm{d}\tau.
\end{align}
According to Lemma \ref{lcom}, it holds that
\begin{equation*}
\|\divv\left([\varrho-\tilde\rho]_\epsilon {\bf v}\right)-\divv\left[(\varrho-\tilde\rho){\bf v}\right]_\epsilon\|_{L^1(K)}\le C(K)\|\varrho-\tilde\rho\|_{L^2(\mathbb R^2_+)}\|{\bf v}\|_{H^{1}(\mathbb R^2_+)}\in L^1(0,T),\ \ \text{for any}\ T>0.
\end{equation*}
This together with Lebesgue's dominated convergence theorem and Lemma \ref{lcom} leads to
\begin{align}\label{5.6}
&\lim\limits_{\epsilon\rightarrow0}\int_0^t
\|\divv\left([\varrho-\tilde\rho]_\epsilon {\bf v}\right)-\divv\left[(\varrho-\tilde\rho){\bf v}\right]_\epsilon\|_{L^1(K)}\mathrm{d}\tau \notag \\
& =\int_0^t\lim\limits_{\epsilon\rightarrow0}\|\divv\left([\varrho-\tilde\rho]_\epsilon {\bf v}\right)-\divv\left[(\varrho-\tilde\rho){\bf v}\right]_\epsilon\|_{L^1(K)}\mathrm{d}\tau=0.
\end{align}
Substituting \eqref{5.6} into \eqref{5.5}, we deduce that
\begin{align}\label{5.7}
\big|\|(\varrho-\tilde\rho)(\cdot,t)\|_{L^2(K)}^2-\|\rho_0-\tilde\rho\|_{L^2(K)}^2\big|= \lim\limits_{\epsilon\rightarrow0} \big|\| ([\varrho]_{\epsilon}-\tilde\rho)(\cdot,t)\|_{L^2(K)}^2- \|[\rho_0]_{\epsilon}-\tilde\rho\|_{L^2(K)}^2\big|=0,
\end{align}
which yields \eqref{1.14}.

Finally, combining \eqref{5.4} and \eqref{5.7}, one has that
\begin{align*}
\lim\limits_{\lambda\rightarrow\infty}\|(\rho^\lambda-\tilde\rho)(\cdot,t)\|_{L^2(K)}
=\|\varrho-\tilde\rho\|_{L^2(K)},\ \ \text{for any compact set} \ K \subset \mathbb{R}^2_+ \text{ and any} \ t\ge 0,
\end{align*}
which along with \eqref{5.1} gives \eqref{1.13}. Consequently, $(\varrho,{\bf v})$ is a global weak solution to the inhomogeneous incompressible Navier--Stokes equations \eqref{a5} in the sense of Definition \ref{d1.2}.
\end{proof}

\section*{Conflict of interests}
The authors declare that they have no conflict of interests.

\section*{Data availability}
No data was used for the research described in the article.



\begin{thebibliography}{99}


\bibitem{BJ18}
D. Bresch and P.-E. Jabin, Global existence of weak solutions for compressible Navier--Stokes equations: thermodynamically unstable pressure and anisotropic viscous stress tensor, {\it Ann. of Math.}, {\bf188} (2018), 577--684.

\bibitem{CL23}
G. Cai and J. Li, Existence and exponential growth of global classical solutions to the compressible Navier--Stokes equations with slip boundary conditions in 3D bounded domains, \textit{Indiana Univ. Math. J.,} \textbf{72} (2023), 2491--2546.

\bibitem{CD10}
F. Charve and R. Danchin, A global existence result for the compressible Navier--Stokes equations in the critical $L^p$ framework, \textit{Arch. Ration. Mech. Anal.,} \textbf{198} (2010), 233--271.

\bibitem{CMZ24}
N. Chaudhuri, P. B. Mucha, and E. Zatorska,
A new construction of weak solutions to compressible Navier--Stokes equations,
{\it Math. Ann.}, {\bf390} (2024), 1669--1729.

\bibitem{CCZ10}
Q. Chen, C. Miao, and Z. Zhang, Global well-posedness for compressible Navier--Stokes equations with highly oscillating initial velocity, \textit{Comm. Pure Appl. Math.,} \textbf{63} (2010), 1173--1224.


\bibitem{CR}
S. M. Cramer, Numerical estimates for the bulk viscosity of ideal gases, \textit{Phys. Fluids,} \textbf{24} (2011), Paper No. 066102.

\bibitem{Da00}
R. Danchin, Global existence in critical spaces for compressible Navier--Stokes equations, \textit{Invent. Math.,} \textbf{141} (2000), 579--614.

\bibitem{DM17}
R. Danchin and P. B. Mucha, Compressible Navier--Stokes system: large solutions and incompressible limit, {\it Adv. Math.}, {\bf320} (2017), 904--925.

\bibitem{DM23}
R. Danchin and P. B. Mucha, Compressible Navier--Stokes equations with ripped density, \textit{Comm. Pure Appl. Math.,} \textbf{76} (2023), 3437--3492.

\bibitem{DE97}
B. Desjardins, Regularity of weak solutions of the compressible isentropic Navier--Stokes equations, \textit{Comm. Partial Differential Equations,} \textbf{22} (1997), 977--1008.

\bibitem{D1997}
B. Desjardins, Regularity results for two-dimensional flows of multiphase viscous fluids, {\it Arch. Ration. Mech. Anal.}, {\bf137} (1997), 135--158.

\bibitem{D12}
Q. Duan, Global well-posedness of classical solutions to the compressible Navier--Stokes equations in a half-space, {\it J. Differential Equations}, {\bf253} (2012), 167--202.


\bibitem{FZZ18}
D. Fang, T. Zhang, and R. Zi, Global solutions to the isentropic compressible Navier--Stokes equations with a class of large initial data, \textit{SIAM J. Math. Anal.,} \textbf{50} (2018), 4983--5026.

\bibitem{F04}
E. Feireisl, Dynamics of viscous compressible fluids, Oxford University Press, Oxford, 2004.

\bibitem{EF01}
E. Feireisl, A. Novotn\'{y}, and H. Petzeltov\'a, On the existence of globally defined weak solutions to the Navier--Stokes equations, \textit{J. Math. Fluid Mech.,} \textbf{3} (2001), 358--392.

\bibitem{GN18}
Y. Giga and A. Novotn\'{y} ed., Handbook of mathematical analysis in mechanics of viscous fluids, Springer, Cham, 2018.


\bibitem{H11}
B. Haspot, Existence of global strong solutions in critical spaces for barotropic viscous fluids, {\it Arch. Ration. Mech. Anal.}, {\bf 202} (2011), 427--460.

\bibitem{HHW19}
L. He, J. Huang, and C. Wang, Global stability of large solutions to the 3D compressible Navier--Stokes equations, \textit{Arch. Ration. Mech. Anal.,} \textbf{234} (2019), 1167--1222.

\bibitem{Hoff95}
D. Hoff, Global solutions of the Navier--Stokes equations for multidimensional compressible flow with discontinuous initial data, \textit{J. Differential Equations,} \textbf{120} (1995), 215--254.

\bibitem{Hoff95*}
D. Hoff, Strong convergence to global solutions for multidimensional flows of compressible, viscous fluids with polytropic equations of state and discontinuous initial data, \textit{Arch. Ration. Mech. Anal.,} \textbf{132} (1995), 1--14.

\bibitem{Hoff02}
D. Hoff, Dynamics of singularity surfaces for compressible, viscous flows in two space dimensions, \textit{Comm. Pure Appl. Math.,} \textbf{55} (2002), 1365--1407.

\bibitem{Hoff05}
D. Hoff, Compressible flow in a half-space with Navier boundary conditions, \textit{J. Math. Fluid Mech.,} \textbf{7} (2005), 315--338.

\bibitem{HHPZ24}
G. Hong, X. Hou, H. Peng, and C. Zhu, Global existence for a class of large solution to compressible Navier--Stokes equations with vacuum, {\it Math. Ann.}, {\bf388} (2024), 2163--2194.


\bibitem{HWZ}
X. Hu, G. Wu, and X. Zhong, Incompressible limit of three-dimensional isentropic compressible Navier--Stokes equations with discontinuous initial data, submitted for publication, 2023.


	

\bibitem{HLX12}
X. Huang, J. Li, and Z. Xin, Global well-posedness of classical solutions with large oscillations and vacuum to the three-dimensional isentropic compressible Navier--Stokes equaitons, \textit{Comm. Pure Appl. Math.,} \textbf{65} (2012), 549--585.

\bibitem{JZ01}
S. Jiang and P. Zhang, On spherically symmetric solutions of the compressible isentropic Navier--Stokes equations, \textit{Comm. Math. Phys.,} \textbf{215} (2001), 559--581.

\bibitem{JZ03}
S. Jiang and P. Zhang, Axisymmetric solutions of the 3D Navier--Stokes equations for compressible isentropic fluids, {\it J. Math. Pures Appl.}, {\bf 82} (2003), 949--973.



\bibitem{KK02}
Y. Kagei and T. Kobayashi, On large-time behavior of solutions to the compressible Navier--Stokes equations in the half space in $\mathbb R^3$,
{\it Arch. Ration. Mech. Anal.}, {\bf 165} (2002), 89--159.

\bibitem{KK05}
Y. Kagei and T. Kobayashi, Asymptotic behavior of solutions of the compressible Navier--Stokes equations on the half space, {\it Arch. Ration. Mech. Anal.}, {\bf 177} (2005), 231--330.

\bibitem{LX19}
J. Li and Z. Xin, Global well-posedness and large time asymptotic behavior of classical solutions to the compressible Navier--Stokes equations with vacuum, \textit{Ann. PDE,} \textbf{5} (2019), Paper No. 7.

\bibitem{PL96}
P.-L. Lions, Mathematical topics in fluid mechanics. Vol. 1, Incompressible models, Oxford University Press, New York, 1996.

\bibitem{PL98}
P.-L. Lions, Mathematical topics in fluid mechanics. Vol. 2, Compressible models, Oxford University Press, New York, 1998.

\bibitem{M97}
J. Mal\'{y} and W. P. Ziemer, Fine regularity of solutions of elliptic partial differential equations, American Mathematical Society, Providence, 1997.

\bibitem{MN80}
A. Matsumura and T. Nishida, The initial value problem for the equations of motion of viscous and heat-conductive gases, \textit{J. Math. Kyoto Univ.,} \textbf{20} (1980), 67--104.

\bibitem{MN83}
A. Matsumura and T. Nishida, Initial boundary value problems for the equations of motion of compressible viscous and heat-conductive fluids, \textit{Comm. Math. Phys.,}  {\bf89} (1983), 445--464.

\bibitem{M1}
F. Merle, P. Rapha\"el, I. Rodnianski, and J. Szeftel, On the implosion of a compressible fluid I: smooth self-similar inviscid profiles,
{\it Ann. of Math.}, {\bf 196} (2022), 567--778.

\bibitem{M2}
F. Merle, P. Rapha\"el, I. Rodnianski, and J. Szeftel, On the implosion of a compressible fluid II: singularity formation, {\it Ann. of Math.}, {\bf 196} (2022), 779--889.

\bibitem{NI1959}
L. Nirenberg, On elliptic partial differential equations, \textit{Ann. Sc. Norm. Super. Pisa Cl. Sci.,} \textbf{13} (1959), 115--162.

\bibitem{NS04}
A. Novotn\'{y} and I. Stra\v{s}kraba, Introduction to the mathematical theory of compressible flow, Oxford University Press, Oxford, 2004.

\bibitem{P14}
M. Perepelitsa, Weak solutions of the Navier--Stokes equations for compressible flows in a half-space with no-slip boundary conditions, \textit{Arch. Ration. Mech. Anal.,} {\bf212} (2014), 709--726.

\bibitem{SW11}
Y. Sun and Z. Zhang, A blow-up criterion of strong solutions to the 2D compressible Navier--Stokes equations, \textit{Sci. China Math.}, {\bf54} (2011), 105--116.

\bibitem{TR67}
N. S. Trudinger, On imbeddings into Orlicz spaces and some applications, \textit{ J. Math. Mech.}, \textbf{17} (1967), 473--483.



\bibitem{WW92}
W. von Wahl, Estimating $\nabla u$ by div$u$ and curl$u$, \textit{Math. Methods Appl. Sci.,} \textbf{15} (1992), 123--143.

\bibitem{ZLZ20}
X. Zhai, Y. Li, and F. Zhou, Global large solutions to the three dimensional compressible Navier--Stokes equations, \textit{SIAM J. Math. Anal.,} \textbf{52} (2020), 1806--1843.

\end{thebibliography}
\end{document}